\newtheorem{lemma}{Lemma}[section]
\newtheorem{theorem}{Theorem}[section]
\newtheorem{example}{Example}[section]
\newdefinition{rmk}{Remark}[section]
\newtheorem*{proof}{Proof}
\begin{document}

\begin{frontmatter}

\title{A conservative difference scheme with optimal pointwise error estimates for two-dimensional space fractional nonlinear Schr\"{o}dinger equations}%

\author[1]{Hongling Hu}
\author[20,2]{Xianlin Jin}
\author[3]{Dongdong He\corref{cor1}}
\cortext[cor1]{Corresponding author.}
\ead{hedongdong@cuhk.edu.cn}
\author[20]{Kejia Pan}
\author[4]{Qifeng Zhang}

\address[1]{Key Laboratory of Computing and Stochastic Mathematics (Ministry of Education), School of Mathematics and Statistics, Hunan Normal University, Changsha, Hunan 410081, China}
\address[20]{School of Mathematics and Statistics, HNP-LAMA, Central South University, Changsha, Hunan 410083, China}
\address[2]{{School of Mathematical Sciences, Peking University, Beijing 100871, China}}
\address[3]{School of Science and Engineering, The Chinese University of Hong Kong, Shenzhen, Shenzhen, Guangdong 518172, China}
\address[4]{Department of Mathematics, Zhejiang Sci-Tech University, {{Hangzhou}}, Zhejiang 310018, China}

\begin{abstract}
In this paper, a linearized semi-implicit finite difference scheme is proposed
for solving the two-dimensional (2D) space  fractional nonlinear Schr\"{o}dinger equation (SFNSE).
{The scheme has the property of mass and energy conservation on the discrete level, with an unconditional stability and a second order accuracy for both time and spatial variables.}
{The main contribution of this paper is an optimal pointwise
error estimate for the 2D SFNSE, which is rigorously established and proved for the first time}.  Moreover,
{a novel technique is proposed for dealing with the nonlinear term in the equation, which plays an essential role in the error estimation.}
Finally, the numerical results confirm well with the theoretical findings.

\begin{keyword}
Riesz fractional derivative \sep pointwise error estimate
\sep unconditional stability \sep second-order convergence \sep conservative difference scheme
\end{keyword}

\end{abstract}
\end{frontmatter}
\section{Introduction}\label{int}
Schr\"{o}dinger equation is one of the most important equations in mathematical physics,
which describes non-relativistic quantum mechanical behavior, including modeling the hydrodynamics of
Bose-Einstein condensate \cite{BJM2003}. In the 1960's, Feynman and Hibbs \cite{FH1965}
used path integrals over Brownian paths to derive the standard (non-fractional) Schr\"{o}dinger
 equation. Fractional quantum mechanics is a theory used to discuss quantum phenomena in fractal environments.
Laskin \emph{et al.} \cite{Laskin2000a,Laskin2000b} first successfully attempt to apply the
fractal concept to reformulate the standard Schr\"{o}dinger equation over L\'{e}vy-like quantum mechanical paths
and develop the fractional Schr\"{o}dinger equation, in which the fractional space derivative
replaces the second-order Laplacian in the standard Schr\"{o}dinger equation. Laskin~\cite{Laskin2002}
established the parity conservation law for fractional Schr\"{o}dinger
equation. Guo \emph{et al.}~\cite{GHX2008}
 studied the existence and uniqueness of the global smooth solution to the
period boundary value problem of fractional nonlinear Schr\"{o}dinger equation by using
the energy method. Dong \emph{et al}~\cite{DX2007} investigated the analytic solution of
space fractional Schr\"{o}dinger equation with linear potential, delta-function potential and
Coulomb potential, respectively. There are also numerous papers devoted to
the theoretical property and practical application of fractional Schr\"{o}dinger equations
\cite{B2012,Chen2008,DZ2015,Luch2013,Sim2013}.  In the literature, there are  some researchers
concentrated on finding  the solution of fractional Schr\"{o}dinger equation  by using
the analytical approach.  For instance, Herzallah \emph{et al.} \cite{HG2012} successfully
applied the adomian decomposition method to find the approximated analytical solution of the Schr\"{o}dinger
equation with time and space fractional derivatives. However, exact solutions of the fractional
Schor\"{o}dinger equation are generally not available. Numerical computation
are extremely important for  fractional Schr\"{o}dinger equations. The time fractional
Schr\"{o}dinger equations  have  been numerically studied by many researchers~\cite{BA2015,BZ2017a,HA2016}, we will not describe these literature in detail.

Due to the wide application of the space  fractional Schr\"{o}dinger equation, performing
efficient and accurate numerical simulations for the space  fractional Schr\"{o}dinger
equation plays an essential role in many real applications. For example, Wang \emph{et al.}
\cite{WXY2014,WXY2013} studied the Crank-Nicolson finite difference scheme and implicit
conservative finite difference scheme for the coupled space  fractional one-dimensional (1D) nonlinear
Schr\"{o}dinger equations with the Riesz space fractional derivative, where
the convergent results in $L^2$-norm was obtained. Afterwards, they  investigated the maximum-norm
error analysis for the 1D coupled space  fractional nonlinear Schr\"{o}dinger equations
\cite{WXY2015}. Wang and Huang \cite{WH2015} proposed an energy conservative difference
scheme for the nonlinear fractional Schr\"{o}dinger equations with convergence results in $L^2$-norm.
Huang \emph{et al.} \cite{WHZ2016} obtained the point-wise error estimate of a
conservative difference scheme for the 1D fractional Schr\"{o}dinger equation. Li \emph{et al.}
\cite{LZL2015} investigated the time-space-fractional Schr\"{o}dinger equation using
implicit finite difference scheme. Zhang \emph{et al.} \cite{DZ2016} proposed three Fourier
spectral methods for 1D fractional Schr\"{o}dinger equation. Yang \cite{Yang2016} {proposed a class of linearized energy-conserved difference schemes for nonlinear space-fractional Schr\"{o}dinger equations with second-order convergence in the $L^2$-norm.}
Ran \emph{et al.} \cite{RZ2016}
established a conservative finite difference scheme for strongly coupled 1D fractional
Schr\"{o}dinger equation, where the solvability, stability and convergence in the maximum norm were established. Khaliq \emph{et al.}~\cite{KLF2017} proposed a fourth-order implicit-explicit scheme for 1D
fractional Schr\"{o}dinger equation, where the stability and convergence in $L^2$-norm
were obtained. Li~\emph{et al.}~\cite{LHZ2018} constructed a conservative linearized
Crank-Nicolson Galerkin FEMs for the nonlinear fractional Schr\"{o}dinger equation,
where an unconditional $L^2$-norm error estimates are derived by using a
new error splitting technique. Closely followed by the previous work, Li~\emph{et al.} \cite{LGH2018}
further studied a fast linearized conservative finite element method for the strongly
coupled 1D nonlinear fractional Schr\"{o}dinger equations.

To the authors' best knowledge, there are very few works in the literature for the
numerical methods and numerical analysis for the high dimensional space  fractional
Schr\"{o}dinger equation. {In the small amount of researches}, Zhao \emph{et al.} \cite{ZS2014} and Wang \emph{et al.}
\cite{WH2016} have studied the numerical 2D nonlinear space fractional Schr\"{o}dinger
equation and obtained an  error estimates in $L^2$-norms. By using Strang's splitting technique, Owolabi \emph{et al.}~\cite{OA2016} simulated the space fractional nonlinear Schr\"{o}dinger equation with the Riesz fractional derivative from 1D to 3D with the exponential time-difference method
in time and the Fourier pseudo-spectral method in space.
Khaliq \emph{et al.} \cite{LKBF2017}
solved the multi-dimensional space-fractional nonlinear Schr\"{o}dinger equations,
where the empirical convergence analysis and calculation of the local truncation error were exhibited.
{After carefully studying the research papers listed above}, we found that the pointwise error estimation for high
dimensional space-fractional Schr\"{o}dinger equation has never been investigated. Furthermore, {the optimal
pointwise error estimate especially the one in the sense of $L^{\infty}$ norm,  which is much more difficult to obtain and has more significant impacts, are still unavailable.}

In this paper, we consider the following 2D SFNSE
\begin{align}
{\bf i}u_t+L_{\alpha}u+|u|^2u=0, \quad (x,y)\in \mathbb{R}^2, \quad 0<t<T,\label{IVP1}
\end{align}
with initial condition
\begin{align}\label{IVP2}
u(x,y,0)=u_0(x,y),\quad (x,y)\in \mathbb{R}^2, \quad 0<t<T,
\end{align}
where $i=\sqrt{-1}$ is the complex unit, $u(x, y, t)$ is a complex-valued function of the time
variable $t$ and space variable $x, y$, $u_0(x,y)$ is the complex-valued function satisfying certain regularity,
and  $1<\alpha\leqslant 2$. The 2D Riesz fractional derivative $L_{\alpha}u\ (\alpha \in (1,2])$ is defined  in $\mathbb{R}^2$  as
\begin{equation*}
  L_{\alpha}u(x,y,t):=L^{\alpha}_xu(x,y,t)+L^{\alpha}_yu(x,y,t).
\end{equation*}
The fractional differential operator $L_x^{\alpha}u$ is
defined in $\mathbb{R}^2$ as follows
\begin{align}\label{1Dfr}
L^{\alpha}_xu(x,y,t):=\displaystyle-\frac{1}{2\cos(\frac{\alpha\pi}{2})}\left(_{-\infty}D^{\alpha}_xu(x,y,t)+_{x}D^{\alpha}_{\infty}u(x,y,t)\right),
\end{align}
in which the left and right Riemann-Liouville fractional derivatives are defined in \eqref{1Dfr} respectively,
\begin{align*}
_{-\infty}D^{\alpha}_xu(x,y,t)=\frac{1}{\Gamma(2-\alpha)}\frac{\partial^2}{\partial x^2}\int^x_{-\infty}\frac{u(\xi,y,t)}{(x-\xi)^{\alpha-1}}d\xi,\\
_{x}D^{\alpha}_{\infty}u(x,y,t)=\frac{1}{\Gamma(2-\alpha)}\frac{\partial^2}{\partial x^2}\int^{\infty}_x\frac{u(\xi,y,t)}{(\xi-x)^{\alpha-1}}d\xi.
\end{align*}
$L^{\alpha}_yu(x,y,t)$ can be defined similarly.
When $\alpha=2$, the 2D Riesz fractional derivative operator $L_{\alpha}$  reduces into the
standard Laplacian operator and the system reduces into the classical 2D nonlinear Schr\"{o}dinger's system \cite{He2017,Pan2020a}.

For problems \eqref{IVP1}--\eqref{IVP2}, by using the similar method in \cite{GHX2008},
we can obtain the following mass and energy conservation
\begin{equation}\label{QE1}
 Q(t) = Q(0), \qquad E(t) = E(0),
\end{equation}
where
\begin{equation*}
  Q(t) = \|u(\cdot,\cdot,t)\|_2,\qquad E(t) = \|(-\Delta)^{\frac{\alpha}{4}}u(\cdot,\cdot,t)\|_2^2-\frac{1}{2}\|u(\cdot,\cdot,t)\|_4^4.
\end{equation*}
$\|\cdot\|_2$ denotes the $L^2$-norm and $\|\cdot\|_4$ denotes the $L^{4}$-norm.
The detailed interpretation is provided in ~\ref{appendixa}.

In this paper,  we develop and analyze a linearized three-level implicit finite difference
scheme for solving the 2D SFNSE. The main contribution the paper is that the optimal pointwise
error estimate is obtained based on the discrete fractional Sobolev embedding theorem
in $H^{\alpha}$ ($1<\alpha\leq 2$) norm. And the technique for dealing with the nonlinear term in the error estimate process is completely new.  As far as we know, this is the first result  to show
the uniform convergence of the numerical solution for 2D SFNSE.

The rest of this paper is organized as follows. Section \ref{section2} summarizes the fractional Sobolev space and provides the 2D discrete fractional Sobolev inequality. The linearized implicit finite difference method is established in Section~\ref{section3}. Section~\ref{section4} analyzes the theoretical results including the unique solvability, convergence and stability. The numerical results are presented in Section \ref{section5}, which confirm our theoretical results. Finally, concluding remarks are provided in the last section.
\medskip

\section{Preliminaries}\label{section2}
\subsection{Spatial discretization}
In the past few years, numerous researchers have focused on the approximation of the Riesz fractional derivative, including Meerschaert \emph{et al.} \cite{Meerschaert1,Meerschaert2}, Yang \emph{et al.}~\cite{Yang}, Deng \emph{et al.}~\cite{Tian,Zhou}, Ortigueira~\cite{Ortigueira}, Celik and Duman~\cite{Celik}, Zhao~\emph{at al.} \cite{ZS2014} and many others. In current paper, Riesz fractional centered difference discretization for the fractional derivative will be adopted.
 \begin{lemma}\label{Lemma1}
  (see~\cite{Celik}){Let $f\in C^{5}(\mathbb{R})\cap L^1(\mathbb{R})$ and all spatial derivatives of $f$ up to fifth order are $L^1(\mathbb{R})$. Then}
\begin{equation*}
L^{\alpha}_xf(x) =-\frac{1}{h^{\alpha}_x}\sum^{\infty}_{j=-\infty}c^{\alpha}_{j}f(x-jh_x) + O(h^2),
\end{equation*}
for $1<\alpha \leqslant 2$, where
\begin{align}
c^{\alpha}_0=\frac{\Gamma(\alpha+1)}{\left(\Gamma(\frac{\alpha}{2}+1)\right)^2}, \quad c^{\alpha}_s=\frac{(-1)^{s}\Gamma(\alpha+1)}{\Gamma(\frac{\alpha}{2}-s+1)\Gamma(\frac{\alpha}{2}+s+1)}=\Big(1-\frac{\alpha+1}{\frac{\alpha}{2}+s}\Big)c_{s-1}^\alpha, \quad \textrm{for } s\in \mathbb{Z},\notag
\end{align}
 and
\begin{align}
c^{\alpha}_j=c^{\alpha}_{-j}\leqslant 0,\quad \textrm{for}\ j=\pm1, \pm 2, \cdots. \label{calpha}
\end{align}

 \end{lemma}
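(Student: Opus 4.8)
The plan is to pass to the Fourier side, where both operators in the claimed identity become Fourier multipliers. First I would record the symbol of the Riesz operator: since the left and right Riemann--Liouville derivatives in \eqref{1Dfr} have Fourier symbols $({\bf i}\omega)^\alpha$ and $(-{\bf i}\omega)^\alpha$, and
\[
({\bf i}\omega)^\alpha+(-{\bf i}\omega)^\alpha = 2|\omega|^\alpha\cos\tfrac{\alpha\pi}{2},
\]
the normalizing factor $1/(2\cos\frac{\alpha\pi}{2})$ cancels exactly and gives $\mathcal{F}[L^\alpha_x f](\omega) = -|\omega|^\alpha \hat f(\omega)$. This is the quantity the difference formula must reproduce.

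The crux of the argument is the generating-function identity for the coefficients,
\[
\sum_{j=-\infty}^{\infty} c_j^\alpha e^{-{\bf i}j\phi} = (2-2\cos\phi)^{\alpha/2} = \left(2\big|\sin\tfrac{\phi}{2}\big|\right)^{\alpha}.
\]
I would prove it by factoring $(2-2\cos\phi)^{\alpha/2}=(1-e^{{\bf i}\phi})^{\alpha/2}(1-e^{-{\bf i}\phi})^{\alpha/2}$, expanding each factor through the binomial series $\sum_k \binom{\alpha/2}{k}(-1)^k e^{\pm{\bf i}k\phi}$, and collecting the coefficient of $e^{-{\bf i}j\phi}$ by a Cauchy product; a Chu--Vandermonde convolution then reduces the resulting binomial sum to $\binom{\alpha}{\alpha/2-j}$, which is precisely $\frac{(-1)^j\Gamma(\alpha+1)}{\Gamma(\alpha/2-j+1)\Gamma(\alpha/2+j+1)}=c_j^\alpha$. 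I expect this identity to be the main obstacle, both because the Vandermonde reduction is delicate and because one must justify interchanging the sums (absolute convergence is available since $c_j^\alpha = O(|j|^{-\alpha-1})$ with $\alpha>1$) and fix branches so the right-hand side stays real and nonnegative.

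Granting the identity, the truncation error becomes transparent. Setting $\phi=h_x\omega$,
\[
\mathcal{F}\!\left[\tfrac{1}{h_x^\alpha}\sum_j c_j^\alpha f(\cdot-jh_x)\right](\omega)
= \frac{\left(2|\sin(h_x\omega/2)|\right)^{\alpha}}{h_x^\alpha}\,\hat f(\omega)
= |\omega|^\alpha\left|\frac{\sin(h_x\omega/2)}{h_x\omega/2}\right|^{\alpha}\hat f(\omega),
\]
and the elementary expansion $\left(\frac{\sin s}{s}\right)^\alpha = 1-\frac{\alpha}{6}s^2+O(s^4)$ shows this symbol equals $|\omega|^\alpha$ up to an error bounded by $Ch_x^2|\omega|^{\alpha+2}$. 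Comparing with $-|\omega|^\alpha$, the symbol of the total error $L^\alpha_x f+h_x^{-\alpha}\sum_j c_j^\alpha f(\cdot-jh_x)$ is $O(h_x^2)|\omega|^{\alpha+2}$ on the range $|h_x\omega|\le 1$ and $O(|\omega|^\alpha)$ otherwise.

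Finally I would invert the transform and bound the error uniformly in $x$, splitting the frequency integral at $|h_x\omega|=1$. On the low-frequency part the symbol is $\le Ch_x^2|\omega|^{\alpha+2}$, while on the high-frequency part $|\omega|^\alpha\le h_x^2|\omega|^{\alpha+2}$ because $|\omega|\ge 1/h_x$; both pieces are therefore dominated by $Ch_x^2\int_{\mathbb{R}}|\omega|^{\alpha+2}|\hat f(\omega)|\,d\omega$. This integral is finite precisely because $f\in C^5$ with all derivatives up to fifth order in $L^1$ forces $|\omega|^5\hat f\in L^\infty$ and hence $|\omega|^{\alpha+2}\hat f\in L^1$ (using $\alpha+2\le 4$), which is where the regularity hypotheses are consumed; this delivers the $O(h^2)$ estimate. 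The symmetry $c_j^\alpha=c_{-j}^\alpha$ is immediate from exchanging the two Gamma factors, and the sign $c_j^\alpha\le 0$ for $j\ne 0$ follows by induction from the recurrence $c_s^\alpha=\left(1-\frac{\alpha+1}{\alpha/2+s}\right)c_{s-1}^\alpha$, tracking that the multiplier is negative throughout the relevant range of $s$.
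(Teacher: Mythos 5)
The paper never proves this lemma: it is quoted directly from \cite{Celik}, so there is no internal proof to compare against. Your argument is in effect a reconstruction of the proof in that reference (which rests on Ortigueira's generating-function identity): the symbol computation $\mathcal{F}[L_x^\alpha f](\omega)=-|\omega|^\alpha\hat f(\omega)$, the identity $\sum_j c_j^\alpha e^{-\mathbf{i}j\phi}=\left|2\sin(\phi/2)\right|^\alpha$ via the binomial series and Chu--Vandermonde, and the frequency-split bound on the error symbol are exactly the standard ingredients, and you carry them out correctly for $1<\alpha<2$.

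There are, however, two defects. The more serious one is at the endpoint $\alpha=2$, which the lemma includes. Your last step infers $|\omega|^{\alpha+2}\hat f\in L^1$ from $|\omega|^5\hat f\in L^\infty$ ``using $\alpha+2\le 4$''; but for large $|\omega|$ this only gives $|\omega|^{\alpha+2}|\hat f(\omega)|\le C|\omega|^{\alpha-3}$, which is integrable at infinity precisely when $\alpha<2$. At $\alpha=2$ the bound is $O(|\omega|^{-1})$, and even the Riemann--Lebesgue improvement $|\omega|^5\hat f(\omega)\to 0$ (from $f^{(5)}\in L^1$) does not yield $\int|\omega|^4|\hat f|\,d\omega<\infty$: decay like $\left(|\omega|^5\log|\omega|\right)^{-1}$ is consistent with the hypotheses but not integrable against $|\omega|^4$. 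So as written your proof covers only $1<\alpha<2$. The repair is cheap: when $\alpha=2$ one has $c_j^2=0$ for $|j|\ge 2$ (the factor $\Gamma(\frac{\alpha}{2}-j+1)$ in the denominator has a pole), the quoted formula degenerates to the classical three-point central difference, and Taylor expansion gives $O(h^2)$ directly. For what it is worth, this borderline issue at $\alpha=2$ is present in the cited reference's own argument as well.

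Second, your sign argument is misstated. The factor $1-\frac{\alpha+1}{\alpha/2+s}$ in the recurrence is negative only for $s=1$ (where $\alpha/2+1<\alpha+1$); for $s\ge 2$ and $\alpha\le 2$ it is nonnegative, since $\alpha/2+s\ge \alpha/2+2\ge\alpha+1$. Were the multiplier ``negative throughout,'' the signs of $c_s^\alpha$ would alternate and \eqref{calpha} would be false. The correct induction is: $c_0^\alpha>0$; the single negative factor at $s=1$ gives $c_1^\alpha\le 0$; thereafter each nonnegative factor preserves the sign, so $c_s^\alpha\le 0$ for all $s\ge 1$, with $c_j^\alpha=c_{-j}^\alpha$ by symmetry of the Gamma factors, as you say.
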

\subsection{Fractional Sobolev norm}

{An infinite 2-D mesh grid can be denoted by} $Z_h=\{(x_j=jh_x, y_k=kh_y), j, k\in Z\}$. For any two grid functions {$U=\{U_{j,k}\}, V=\{V_{j,k}\}$} defined on $Z_h$, the discrete inner product and the norms are defined as
\begin{align}\label{notation2}
 {(U,V)=h_x h_y\sum_{j,k\in Z}U_{j,k}{V}^*_{j,k},\quad \|U\|_2=\sqrt{(U,U)}, \quad \|U\|_{\infty}=\max_{j,k\in Z}{|U_{j,k}|},}
\end{align}
where $U^*$ is the complex conjugate of $U$. Let {$L^2_h=\left\{V|V\in Z_h,\|V\|_2<+\infty\right\}$}.

The semi-discrete Fourier transformation
$\hat{V}(k_1,k_2)\in L^2\left(\left[-\frac{\pi}{h_1},\frac{\pi}{h_1}\right]\right.$
$\left. \times\left[-\frac{\pi}{h_2},\frac{\pi}{h_2}\right]\right)$ is defined as
\begin{align}\label{Fourier}
\hat{V}(k_1,k_2)=\frac{h_xh_y}{2\pi}\sum_{j\in Z}\sum_{k\in Z}V_{j,k}e^{-\mathbf{i}k_1x_j-\mathbf{i}k_2y_k}
\end{align}
and the inversion formula is given by
\begin{align}\label{iFourier}
V_{j,k}=\frac{1}{2\pi}\int^{\frac{\pi}{h_y}}_{-\frac{\pi}{h_y}}\int^{\frac{\pi}{h_x}}_{-\frac{\pi}{h_x}}\hat{V}(k_1,k_2)e^{\mathbf{i}k_1x_j+\mathbf{i}k_2y_k}dk_1dk_2.
\end{align}
{The Parseval's identity naturally leads to that}
\begin{align}\label{parseval}
(U,V)=\int^{\frac{\pi}{h_y}}_{-\frac{\pi}{h_y}}\int^{\frac{\pi}{h_x}}_{-\frac{\pi}{h_x}}\hat{U}(k_1,k_2){\hat{V}^*(k_1,k_2)}dk_1dk_2.
\end{align}
Therefore, we can also use a definition $ \|U\|_2=\left(\int^{\frac{\pi}{h_y}}_{-\frac{\pi}{h_y}}\int^{\frac{\pi}{h_x}}_{-\frac{\pi}{h_x}}|\hat{U}(k_1,k_2)|^2dk_1dk_2\right)^{\frac{1}{2}}$.

For a given constant $\alpha \in(1,2]$, the fractional Sobolev norm $\|V\|_{H^{\alpha}}$  and semi-norms $|V|_{H^{\alpha}}$, $|V|_{H^{\frac{\alpha}{2}}}$ can be defined as follows
\begin{align}\label{Sobolev}
\| V\|^2_{H^{\alpha}}&=\int^{\pi/h_y}_{-\pi/h_y}\int^{\pi/h_x}_{-\pi/h_x}\left(1+|k_1|^{\alpha}+|k_2|^{\alpha}+2|k_1k_2|^{\alpha}+|k_1|^{2\alpha}+|k_2|^{2\alpha}\right)|\hat{V}(k_1,k_2)|^2dk_1dk_2,\\
|V|^2_{H^{\frac{\alpha}{2}}}&=\int^{\pi/h_y}_{-\pi/h_y}\int^{\pi/h_x}_{-\pi/h_x}\left(|k_1|^{\alpha}+|k_2|^{\alpha}\right)|\hat{V}(k_1,k_2)|^2dk_1dk_2,\\
|V|^2_{H^{\alpha}}&=\int^{\pi/h_y}_{-\pi/h_y}\int^{\pi/h_x}_{-\pi/h_x}\left(2|k_1k_2|^{\alpha}+|k_1|^{2\alpha}+|k_2|^{2\alpha}\right)|\hat{V}(k_1,k_2)|^2dk_1dk_2.
\end{align}
{Using these definitions, we have a discrete version of the summation property of Sobolev's norms}, $$\|V\|^2_{H^{\alpha}}=\|V\|^2+|V|^2_{H^{\frac{\alpha}{2}}}+|V|^2_{H^{\alpha}}.$$

Let {$H_h^\alpha:=\left\{V|V\in Z_h, \|V\|_{H^{\alpha}}<+\infty\right\}$.}

 \begin{lemma}\label{Lemma2}
 (Discrete fractional Sobolev's inequality) For any $1<\alpha\leqslant 2$ and $U\in H_h^{\alpha}$, it holds that
 \begin{align}
\|U\|_{\infty}\leqslant C_{\alpha}\|U\|_{H^{\alpha}},
\end{align}
where $C_{\alpha}$ is a positive constant that depends on $\alpha$ but does not depend on $h_x$ or $h_y$.
\end{lemma}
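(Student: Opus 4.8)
The plan is to argue entirely on the Fourier side, bounding the pointwise maximum of $U_{j,k}$ by the $L^1$-mass of $\hat U$ and then trading that for the weighted $L^2$-mass that defines $\|U\|_{H^\alpha}$ through the Cauchy--Schwarz inequality. First I would start from the inversion formula \eqref{iFourier}. Since $|e^{\mathbf{i}k_1 x_j+\mathbf{i}k_2 y_k}|=1$, taking absolute values yields, uniformly in $j,k$,
\[
|U_{j,k}|\leq \frac{1}{2\pi}\int_{-\pi/h_y}^{\pi/h_y}\int_{-\pi/h_x}^{\pi/h_x}|\hat U(k_1,k_2)|\,dk_1\,dk_2,
\]
and the same bound therefore holds for $\|U\|_\infty$.

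Next I would introduce the weight $w(k_1,k_2)=1+|k_1|^{\alpha}+|k_2|^{\alpha}+2|k_1k_2|^{\alpha}+|k_1|^{2\alpha}+|k_2|^{2\alpha}$ appearing in \eqref{Sobolev}, write the integrand as $|\hat U|=\big(|\hat U|\,w^{1/2}\big)\,w^{-1/2}$, and apply Cauchy--Schwarz. The first factor then contributes exactly $\|U\|_{H^\alpha}$ by the definition of the norm, so
\[
\|U\|_\infty\leq \frac{1}{2\pi}\,\|U\|_{H^\alpha}\left(\int_{-\pi/h_y}^{\pi/h_y}\int_{-\pi/h_x}^{\pi/h_x}\frac{dk_1\,dk_2}{w(k_1,k_2)}\right)^{1/2}.
\]
It then remains only to bound the last integral by a constant depending on $\alpha$ alone.

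The crux is to show that $\int_{\mathbb{R}^2}w^{-1}$ converges, which at the same stroke removes the dependence on $h_x,h_y$: because the integrand is nonnegative, enlarging the truncated frequency box to all of $\mathbb{R}^2$ only increases the integral and yields a mesh-independent bound. To establish finiteness I would discard the cross term and the $\alpha$-order terms, using $w\geq 1+|k_1|^{2\alpha}+|k_2|^{2\alpha}$, and then apply the elementary inequality $1+a+b\geq\sqrt{(1+a)(1+b)}$ for $a,b\geq 0$ with $a=|k_1|^{2\alpha}$, $b=|k_2|^{2\alpha}$ to factorize the double integral,
\[
\int_{\mathbb{R}^2}\frac{dk_1\,dk_2}{w}\leq\left(\int_{-\infty}^{\infty}\frac{dk}{\sqrt{1+|k|^{2\alpha}}}\right)^{2}.
\]
The one-dimensional integral behaves like $\int^{\infty}|k|^{-\alpha}\,dk$ at infinity and hence converges precisely because $\alpha>1$; this is exactly where the hypothesis $1<\alpha\leq 2$ enters. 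Taking $C_\alpha$ to be $\frac{1}{2\pi}$ times the square root of the resulting constant completes the proof.

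I expect the main obstacle to be recognizing why the top-order terms $|k_1|^{2\alpha}+|k_2|^{2\alpha}$ of the full $H^\alpha$ norm are indispensable, rather than merely the $\frac{\alpha}{2}$-seminorm: had only $|k_1|^{\alpha}+|k_2|^{\alpha}$ been available, the reduced one-dimensional integral $\int^{\infty}|k|^{-\alpha/2}\,dk$ would diverge for every $\alpha\leq 2$, and the embedding into $L^\infty$ would fail. Identifying this threshold, and splitting the weight so that Cauchy--Schwarz lands precisely on the available $H^\alpha$ weight, is the conceptual heart of the argument; everything else is routine.
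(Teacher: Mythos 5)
Your proposal is correct and follows essentially the same route as the paper: Fourier inversion to bound $\|U\|_{\infty}$ by the $L^1$-mass of $\hat U$, then Cauchy--Schwarz against a weight comparable to the $H^{\alpha}$ weight, with convergence of the reciprocal-weight integral hinging exactly on $\alpha>1$. The only cosmetic difference is that you insert the full weight $w$ and then factorize it via $1+a+b\geqslant\sqrt{(1+a)(1+b)}$, whereas the paper inserts the separable product $(1+|k_1|^{\alpha})(1+|k_2|^{\alpha})$ from the start (which is dominated by $w$ and whose reciprocal integral factorizes immediately); your explicit extension of the frequency box to all of $\mathbb{R}^2$ to get mesh-independence of $C_{\alpha}$ is in fact slightly more careful than the paper's wording.
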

\begin{proof}
From the inversion of the Fourier transformation (\ref{iFourier}), we have
\begin{align}
\|U\|_{\infty}=&\;\max_{\substack{j, k\in Z}} \left|\frac{1}{2\pi}\int^{\frac{\pi}{h_y}}_{-\frac{\pi}{h_y}}\int^{\frac{\pi}{h_x}}_{-\frac{\pi}{h_x}}\hat{U}(k_1,k_2)e^{\mathbf{i}k_1x_j+\mathbf{i}k_2y_k}dk_1dk_2\right|\nonumber\\
\leqslant&\;\frac{1}{2\pi}\int^{\frac{\pi}{h_y}}_{-\frac{\pi}{h_y}}\int^{\frac{\pi}{h_x}}_{-\frac{\pi}{h_x}}\left|\hat{U}(k_1,k_2)\right|dk_1dk_2\nonumber\\
\leqslant&\;\frac{1}{2\pi}\int^{\frac{\pi}{h_y}}_{-\frac{\pi}{h_y}}\int^{\frac{\pi}{h_x}}_{-\frac{\pi}{h_x}}\left(1+|k_1|^{\alpha}\right)^{\frac{1}{2}}\left(1+|k_2|^{\alpha}\right)^{\frac{1}{2}}\left|\hat{U}(k_1,k_2)\right|\left(1+|k_1|^{\alpha}\right)^{-\frac{1}{2}}\left(1+|k_2|^{\alpha}\right)^{-\frac{1}{2}}dk_1dk_2\nonumber\\
\leqslant&\;\frac{1}{2\pi}\left(\int^{\frac{\pi}{h_y}}_{-\frac{\pi}{h_y}}
\int^{\frac{\pi}{h_x}}_{-\frac{\pi}{h_x}}\frac{1}{\left(1+|k_1|^{\alpha}\right)
\left(1+|k_2|^{\alpha}\right)}dk_1dk_2\right)^{\frac{1}{2}}\nonumber\\
&\;\times \left(\int^{\frac{\pi}{h_y}}_{-\frac{\pi}{h_y}}
\int^{\frac{\pi}{h_x}}_{-\frac{\pi}{h_x}}\left(1+|k_1|^{\alpha}\right)\left(1+|k_2|^{\alpha}\right)\left|\hat{U}(k_1,k_2)\right|^2dk_1dk_2\right)^{\frac{1}{2}}\nonumber\\
\leqslant&\;C_{\alpha}\|U\|_{H^{\alpha}},\notag
\end{align}
where $C_{\alpha}=\frac{1}{2\pi}\left(\int^{\frac{\pi}{h_y}}_{-\frac{\pi}{h_y}}\int^{\frac{\pi}{h_x}}_{-\frac{\pi}{h_x}}\frac{1}{\left(1+|k_1|^{\alpha}\right)
\left(1+|k_2|^{\alpha}\right)}dk_1dk_2\right)^{\frac{1}{2}}>0$.
\end{proof}

 \begin{lemma}\label{Lemma3}
 (Fractional seminorm equivalence) For every $1<\alpha\leqslant 2$ and $U\in H_h^{\alpha}$, the second-order difference scheme for the space fractional  derivatives is given by~\cite{Celik,Ortigueira}
 \begin{align}\label{difference}
 \mathcal{L}^{\alpha}_xU_{j,k}\triangleq-\frac{1}{h^{\alpha}_x}\sum^{+\infty}_{l=-\infty}c^{\alpha}_lU_{j-l,k},\quad \mathcal{L}^{\alpha}_yU_{j,k}\triangleq-\frac{1}{h^{\alpha}_y}\sum^{+\infty}_{m=-\infty}c^{\alpha}_mU_{j,k-m},
 \end{align}
then
 \begin{align}
\left(\frac{2}{\pi}\right)^{\alpha}|U|^2_{H^{\frac{\alpha}{2}}}\leqslant \left(-(\mathcal{L}^{\alpha}_x+\mathcal{L}^{\alpha}_y)U,U\right)\leqslant |U|^2_{H^{\frac{\alpha}{2}}}, \quad \forall\ U\in {H_h^\alpha},\notag
\end{align}
and
 \begin{align}
\left(\frac{2}{\pi}\right)^{2\alpha}|U|^2_{H^{\alpha}}\leqslant \left((\mathcal{L}^{\alpha}_x+\mathcal{L}^{\alpha}_y)U,(\mathcal{L}^{\alpha}_x+\mathcal{L}^{\alpha}_y)U\right)\leqslant |U|^2_{H^{\alpha}}, \quad \forall\ U\in {H_h^\alpha}.\notag
\end{align}
\end{lemma}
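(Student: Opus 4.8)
The plan is to diagonalize the difference operators $\mathcal{L}^\alpha_x$ and $\mathcal{L}^\alpha_y$ through the semi-discrete Fourier transform, so that each two-sided inequality reduces to a pointwise comparison between the Fourier symbol of the scheme and the symbol $|k_1|^\alpha+|k_2|^\alpha$ defining the seminorm. First I would compute the symbol: applying the shift property of the transform \eqref{Fourier} to the definition \eqref{difference} gives $\widehat{\mathcal{L}^\alpha_x U}(k_1,k_2)=-h_x^{-\alpha}\big(\sum_{l}c^\alpha_l e^{-\mathbf{i}k_1lh_x}\big)\hat U(k_1,k_2)$. Using the symmetry $c^\alpha_l=c^\alpha_{-l}$ from Lemma~\ref{Lemma1} together with the generating-function identity for the centered coefficients recorded in \cite{Celik,Ortigueira}, the bracketed sum collapses to $\big(2|\sin\tfrac{k_1h_x}{2}|\big)^\alpha$, so that
\[
\widehat{-\mathcal{L}^\alpha_x U}(k_1,k_2)=\sigma_x(k_1)\,\hat U(k_1,k_2),\qquad \sigma_x(k_1):=\Big(\tfrac{2}{h_x}\Big)^{\alpha}\Big|\sin\tfrac{k_1h_x}{2}\Big|^{\alpha},
\]
and analogously $\widehat{-\mathcal{L}^\alpha_y U}=\sigma_y(k_2)\hat U$. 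The symbol $\sigma_x$ is real and nonnegative, consistent with the sign pattern $c^\alpha_0>0$, $c^\alpha_j\le 0$ recorded in \eqref{calpha}.

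With the symbols in hand, Parseval's identity \eqref{parseval} converts the first bilinear form into
\[
\big(-(\mathcal{L}^\alpha_x+\mathcal{L}^\alpha_y)U,U\big)=\int_{-\pi/h_y}^{\pi/h_y}\!\!\int_{-\pi/h_x}^{\pi/h_x}\big(\sigma_x(k_1)+\sigma_y(k_2)\big)\,|\hat U(k_1,k_2)|^2\,dk_1dk_2,
\]
which is to be compared with the definition of $|U|^2_{H^{\alpha/2}}$. It therefore suffices to establish the pointwise envelope $(\tfrac{2}{\pi})^{\alpha}(|k_1|^\alpha+|k_2|^\alpha)\le \sigma_x(k_1)+\sigma_y(k_2)\le |k_1|^\alpha+|k_2|^\alpha$ on the transform domain $|k_1|\le \pi/h_x$, $|k_2|\le \pi/h_y$. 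By additivity it is enough to treat one coordinate; setting $s=\tfrac{k_1h_x}{2}\in[-\tfrac{\pi}{2},\tfrac{\pi}{2}]$ yields the clean ratio $\sigma_x(k_1)/|k_1|^\alpha=(|\sin s|/|s|)^{\alpha}$, and Jordan's inequality $\tfrac{2}{\pi}\le \tfrac{\sin s}{s}\le 1$ on $(0,\tfrac{\pi}{2}]$ (with removable limit $1$ at $s=0$) then gives $(\tfrac{2}{\pi})^{\alpha}|k_1|^\alpha\le \sigma_x(k_1)\le |k_1|^\alpha$; adding this to its $y$-analogue closes the first claim.

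For the second inequality I would observe that $(\mathcal{L}^\alpha_x+\mathcal{L}^\alpha_y)U$ carries the Fourier multiplier $-(\sigma_x+\sigma_y)$, so Parseval's identity gives
\[
\big((\mathcal{L}^\alpha_x+\mathcal{L}^\alpha_y)U,(\mathcal{L}^\alpha_x+\mathcal{L}^\alpha_y)U\big)=\int\!\!\int\big(\sigma_x(k_1)+\sigma_y(k_2)\big)^{2}|\hat U|^2\,dk_1dk_2.
\]
Since $2|k_1k_2|^\alpha+|k_1|^{2\alpha}+|k_2|^{2\alpha}=(|k_1|^\alpha+|k_2|^\alpha)^2$, the weight defining $|U|^2_{H^\alpha}$ is exactly the square of the weight from the first part. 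As $\sigma_x+\sigma_y$ and $|k_1|^\alpha+|k_2|^\alpha$ are both nonnegative, the envelope already proven may be squared, giving $(\tfrac{2}{\pi})^{2\alpha}(|k_1|^\alpha+|k_2|^\alpha)^2\le(\sigma_x+\sigma_y)^2\le(|k_1|^\alpha+|k_2|^\alpha)^2$; integrating against $|\hat U|^2$ finishes the second claim.

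The step that carries the real content is the symbol identity $\sum_l c^\alpha_l e^{-\mathbf{i}l\theta}=\big(2|\sin\tfrac{\theta}{2}|\big)^\alpha$; everything downstream is a one-variable monotonicity estimate. This is the generating-function property of the centered fractional coefficients from \cite{Celik,Ortigueira}, which I would cite directly, or otherwise verify by recognizing the $c^\alpha_s$ of Lemma~\ref{Lemma1} as the Fourier coefficients of the $2\pi$-periodic function $\big(2|\sin\tfrac{\theta}{2}|\big)^\alpha$ through its Gamma-function expansion. A minor point to handle carefully is the domain restriction $|k_i|\le \pi/h_i$ inherited from the semi-discrete transform, which is precisely what confines $s$ to $[-\tfrac{\pi}{2},\tfrac{\pi}{2}]$ and renders Jordan's inequality applicable with no loss.
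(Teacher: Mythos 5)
Your proposal is correct and follows essentially the same route as the paper's own proof: diagonalization of $\mathcal{L}^{\alpha}_x+\mathcal{L}^{\alpha}_y$ via the semi-discrete Fourier transform and the generating-function identity, Parseval's identity, and the Jordan-type bound $\frac{2}{\pi}|s|\leqslant|\sin s|\leqslant|s|$ on $[-\frac{\pi}{2},\frac{\pi}{2}]$, with the second claim obtained by squaring the symbol envelope. Your only addition is making explicit the algebraic identity $(|k_1|^{\alpha}+|k_2|^{\alpha})^2=2|k_1k_2|^{\alpha}+|k_1|^{2\alpha}+|k_2|^{2\alpha}$, which the paper uses tacitly.
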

\begin{proof}
{Applying the semi-discrete Fourier transform on $\mathcal{L}^{\alpha}_xU_{j,k}+\mathcal{L}^{\alpha}_yU_{j,k}$, and using the generating functions}
 \[\sum^{+\infty}_{l=-\infty}c^{\alpha}_le^{\mathbf{i}h_xlk_1}=\left|2\sin\left(\frac{k_1h_x}{2}\right)\right|^{\alpha},\quad \sum^{+\infty}_{m=-\infty}c^{\alpha}_me^{\mathbf{i}h_ymk_2}=\left|2\sin\left(\frac{k_2h_y}{2}\right)\right|^{\alpha},\]
{we conclude that}
 \begin{align}
\widehat{ (\mathcal{L}^{\alpha}_x+\mathcal{L}^{\alpha}_y)U_{j,k}}=&-\left(\frac{1}{h^{\alpha}_x}\sum^{+\infty}_{l=-\infty}c^{\alpha}_le^{\mathbf{i}h_xlk_1}
+\frac{1}{h^{\alpha}_y}\sum^{+\infty}_{m=-\infty}c^{\alpha}_me^{\mathbf{i}h_ymk_2}\right)\hat{U}(k_1,k_2)\nonumber\\
=&-\left(\frac{1}{h^{\alpha}_x}\left|2\sin\left(\frac{k_1h_x}{2}\right)\right|^{\alpha}+\frac{1}{h^{\alpha}_y}
\left|2\sin\left(\frac{k_2h_y}{2}\right)\right|^{\alpha}\right)\hat{U}(k_1,k_2)\notag,
 \end{align}

The Parseval's identity leads to that
 \begin{align}
&\;\left(-(\mathcal{L}^{\alpha}_x+\mathcal{L}^{\alpha}_y)U,U\right)\nonumber\\
=&\;\int^{\frac{\pi}{h_y}}_{-\frac{\pi}{h_y}}\int^{\frac{\pi}{h_x}}_{-\frac{\pi}{h_x}}
\left(\frac{1}{h^{\alpha}_x}\left|2\sin\left(\frac{k_1h_x}{2}\right)\right|^{\alpha}+\frac{1}{h^{\alpha}_y}
\left|2\sin\left(\frac{k_2h_y}{2}\right)\right|^{\alpha}\right)|\hat{U}(k_1,k_2)|^2dk_1dk_2.\notag
 \end{align}
 Since $k_1\in [-\frac{\pi}{h_x},\frac{\pi}{h_x}]$ and $k_2\in [-\frac{\pi}{h_y},\frac{\pi}{h_y}]$, then $\frac{k_1h_x}{2}\in [-\frac{\pi}{2},\frac{\pi}{2}]$ and $\frac{k_2h_y}{2}\in [-\frac{\pi}{2},\frac{\pi}{2}]$. It hold a few estimates:
 $$\frac{2}{\pi}\left|\frac{k_1h_x}{2}\right|\leqslant \sin\left(\frac{k_1h_x}{2}\right)\leqslant \left|\frac{k_1h_x}{2}\right|,\quad \frac{2}{\pi}\left|\frac{k_2h_y}{2}\right|\leqslant \sin\left(\frac{k_2h_y}{2}\right)\leqslant \left|\frac{k_2h_y}{2}\right|,$$
 $$\left(\frac{2}{\pi}\right)^{\alpha}|k_1|^{\alpha}\leqslant\frac{1}{h^{\alpha}_x}\left|2\sin\left(\frac{k_1h_x}{2}\right)\right|^{\alpha}\leqslant |k_1|^{\alpha}, \quad \left(\frac{2}{\pi}\right)^{\alpha}|k_2|^{\alpha}\leqslant\frac{1}{h^{\alpha}_y}\left|2\sin\left(\frac{k_2h_y}{2}\right)\right|^{\alpha}\leqslant |k_2|^{\alpha}.$$

 Therefore,
  \begin{align}
& \left(\frac{2}{\pi}\right)^{\alpha}\int^{\frac{\pi}{h_y}}_{-\frac{\pi}{h_y}}\int^{\frac{\pi}{h_x}}_{-\frac{\pi}{h_x}}\left(|k_1|^{\alpha}+|k_2|^{\alpha}\right)
|\hat{U}(k_1,k_2)|^2dk_1dk_2 \notag\\
\leqslant &\;\left(-(\mathcal{L}^{\alpha}_x+\mathcal{L}^{\alpha}_y)U,U\right)\notag \\
\leqslant&\; \int^{\frac{\pi}{h_y}}_{-\frac{\pi}{h_y}}\int^{\frac{\pi}{h_x}}_{-\frac{\pi}{h_x}}\left(|k_1|^{\alpha}+|k_2|^{\alpha}\right)
|\hat{U}(k_1,k_2)|^2dk_1dk_2,\notag
 \end{align}
{which can be understood as}
 \begin{align}
\left(\frac{2}{\pi}\right)^{\alpha}|U|^2_{H^{\frac{\alpha}{2}}}\leqslant \left(-(\mathcal{L}^{\alpha}_x+\mathcal{L}^{\alpha}_y)U,U\right)\leqslant |U|^2_{H^{\frac{\alpha}{2}}}, \quad \forall\ U\in {H_h^\alpha}.\notag
\end{align}

 Again, using the Parseval's identity, we have
 \begin{align}
&\left((\mathcal{L}^{\alpha}_x+\mathcal{L}^{\alpha}_y)U,(\mathcal{L}^{\alpha}_x+\mathcal{L}^{\alpha}_y)U\right)\notag\\
=&\int^{\frac{\pi}{h_y}}_{-\frac{\pi}{h_y}}\int^{\frac{\pi}{h_x}}_{-\frac{\pi}{h_x}}\left(\frac{1}{h^{\alpha}_x}
\left|2\sin\left(\frac{k_1h_x}{2}\right)\right|^{\alpha}+\frac{1}{h^{\alpha}_y}\left|2\sin\left(\frac{k_2h_y}{2}\right)\right|^{\alpha}\right)^2|\hat{U}(k_1,k_2)|^2dk_1dk_2.\notag
 \end{align}

 Therefore,
  \begin{align}
&\left(\frac{2}{\pi}\right)^{2\alpha}\int^{\frac{\pi}{h_y}}_{-\frac{\pi}{h_y}}\int^{\frac{\pi}{h_x}}_{-\frac{\pi}{h_x}}\left(|k_1|^{\alpha}+|k_2|^{\alpha}\right)^2
|\hat{U}(k_1,k_2)|^2dk_1dk_2\notag \\
\leqslant&\; \left((\mathcal{L}^{\alpha}_x+\mathcal{L}^{\alpha}_y)U,(\mathcal{L}^{\alpha}_x+\mathcal{L}^{\alpha}_y)U\right)\notag \\
\leqslant&\; \int^{\frac{\pi}{h_y}}_{-\frac{\pi}{h_y}}\int^{\frac{\pi}{h_x}}_{-\frac{\pi}{h_x}}\left(|k_1|^{\alpha}+|k_2|^{\alpha}\right)^2
|\hat{U}(k_1,k_2)|^2dk_1dk_2.\notag
 \end{align}
i.e.,
 \begin{align}
\left(\frac{2}{\pi}\right)^{2\alpha}|U|^2_{H^{\alpha}}\leqslant \left((\mathcal{L}^{\alpha}_x+\mathcal{L}^{\alpha}_y)U,(\mathcal{L}^{\alpha}_x+\mathcal{L}^{\alpha}_y)U\right)\leqslant |U|^2_{H^{\alpha}}, \quad \forall\ U\in {H_h^\alpha}.\notag
\end{align}
\end{proof}

\section{A three-level linearized implicit difference scheme}\label{section3}

Since the solution of problem (\ref{IVP1})-(\ref{IVP2}) {vanishes} as $(x,y)\rightarrow\infty$, {then in practical calculations, the original problem can be regarded as that defined on} a sufficiently large but bounded domain with homogeneous Dirichlet boundary condition. Namely,
\begin{align}\label{BC}
u(x,y,t)=0,\quad (x,y)\ {\rm on}\ \partial\Omega,\ t\in [0,T],
\end{align}
where $\Omega=[a,b]\times[c,d]$,  $a,c$ are sufficiently large negative numbers and  $b,d$ are sufficiently large positive numbers such that the truncation error is negligible.
In what follows, we will make no distinction between $u(x,y,t)$ defined in $\mathbb{R}^2$ and $u(x,y,t)$ defined in $\Omega=[a,b]\times[c,d]$, {and always suppose that the equation is augmented by the zero-boundary constraint}.

{Let $N$, $M_x$ and $M_y$ be positive integers, and define $\Delta t= T/N$, $h_x={(b-a)}/{M_x}$, $h_y={(d-c)}/{M_y}$ to be the time step and spatial mesh-size, respectively}. {Now
$\Omega\times [0,T]$ can therefore be covered by}
\[\Omega^0_h=\{(x_i,y_j,t_n)|x_j=jh_x,\;y_k=kh_y,\;t_n=n\Delta t, \;j=0,\cdots,M_x, \;k=0,\cdots, M_y, \;n=0,\cdots, N\}.\]
In addition, let  $U^n=(U^n_{j,k})_{(M_x+1)(M_y+1)}$ be the numerical solution at time level $t=t_n$.
{Following this definition, we have the discrete Dirichlet boundary condition} (\ref{BC})
\begin{equation}\label{Boundary}
  U^{n}_{0,k}=U^n_{M_x,k}=U^{n}_{j,0}=U^{n}_{j,M_y}=0,\quad n=0,\cdots,N.
\end{equation}

 For  {every} grid function $U^n$, we define four difference operators:
 \begin{align*}
 &U^{\bar{n}}_{j,k}=\frac{U^{n+1}_{j,k}+U^{n-1}_{j,k}}{2}, \quad \delta_{t}U^{n}_{j,k}=\frac{U^{n+1}_{j,k}-U^{n-1}_{j,k}}{2\tau}, \\
  &\delta_t U_{j,k}^{n+\frac{1}{2}} = \frac{1}{\tau}(U^{n+1}_{j,k}-U^{n}_{j,k}),  \quad U^{\bar{\bar{n}}}_{j,k}=\frac{U^{n+2}_{j,k}+2U^{n}_{j,k}+U^{n-2}_{j,k}}{4}.
 \end{align*}
We also need {the discrete} inner product and norms for each two grid functions $U, V$. Define
\begin{align}\label{notation1}
(U,V)=h_x h_y\sum^{M_x-1}_{j=1}\sum^{M_y-1}_{k=1}U_{j,k}{V}^*_{j,k},\quad \|U\|_2=\sqrt{(U,U)},  \quad \|U\|_{\infty}=\max\limits_{\substack{1\leqslant j\leqslant M_x-1\\ 1\leqslant k\leqslant M_y-1}}|U^n_{j,k}|.
\end{align}

{We should notice that for any $U\in \Omega^{0}_h$, we can extend the grid function into an infinite-grid-defined function by letting} $U_{j,k}=0$ when $(x_j=j\Delta x,y_k=k\Delta y)$ is {outside} the domain $\Omega$, Thus the norms and inner product defined in (\ref{notation2}) can also be defined for grid functions in $\Omega^{0}_h$. Moreover, Lemma~\ref{Lemma1} and Lemma~\ref{Lemma2}  are also valid for every $U\in \Omega^{0}_h$. {As a result, we will not make distinctions} between the inner products and norms defined in (\ref{notation2}) and (\ref{notation1}) {in all the later sections}.

Note that Eq. \eqref{IVP1}  can be rewritten as
\begin{align}\label{equation2}
u_t=\mathbf{i}L^{\alpha}_xu+\mathbf{i}L^{\alpha}_yu+\mathbf{i}|u|^2u.
\end{align}
For time discretization, we use a three-level {linear} implicit
scheme for (\ref{IVP1}) around $t=t_n$, which causes a second-order error of time step:
\begin{align}\label{equation3}
\frac{u^{n+1}-u^{n-1}}{2\tau}=\mathbf{i}L^{\alpha}_x\frac{u^{n+1}+u^{n-1}}{2}+\mathbf{i}L^{\alpha}_y\frac{u^{n+1}+u^{n-1}}{2}+\mathbf{i}|u^n|^2\frac{u^{n+1}+u^{n-1}}{2}+O(\tau^2).
\end{align}
After collecting the terms of $u^{n+1}$ and $u^{n-1}$ in (\ref{equation3}), the above relation is changed into
\begin{align}\label{equation4}
\left(\frac{1}{\tau}-\mathbf{i}|u^n|^2-\mathbf{i}L^{\alpha}_x-\mathbf{i}L^{\alpha}_y\right)u^{n+1}
=\left(\frac{1}{\tau}+\mathbf{i}|u^n|^2+\mathbf{i}L^{\alpha}_x+\mathbf{i}L^{\alpha}_y\right)u^{n-1}+O(\tau^2).
\end{align}

Under the assumption of the homogenous boundary condition (\ref{BC}), the second-order difference scheme for  space fractional derivatives (\ref{difference}) {are given below}
\begin{align} (\mathcal{L}^{\alpha}_xU)_{j,k}=-\frac{1}{h^{\alpha}_x}\sum^{j-1}_{s=j-M_x+1}c^{\alpha}_{s}U_{j-s,k}=-\frac{1}{h^{\alpha}_x}\sum^{M_x-1}_{s=1}c^{\alpha}_{j-s}U_{s,k},\\ (\mathcal{L}^{\alpha}_yU)_{j,k}=-\frac{1}{h^{\alpha}_y}\sum^{k-1}_{s=k-M_y+1}c^{\alpha}_{s}U_{j,k-s}=-\frac{1}{h^{\alpha}_y}\sum^{M_y-1}_{s=1}c^{\alpha}_{k-s}U_{j,s}.
\end{align}

{At last, our linear finite difference scheme is summarized as follows:}
\begin{align}\label{scheme1}
\left(\frac{1}{\tau}-\mathbf{i}|U^n_{j,k}|^2-\mathbf{i}\mathcal{L}^{\alpha}_x-\mathbf{i}\mathcal{L}^{\alpha}_y\right)U^{n+1}_{j,k}=\left(\frac{1}{\tau}+\mathbf{i}|U^n_{j,k}|^2+\mathbf{i}\mathcal{L}^{\alpha}_x+\mathbf{i}\mathcal{L}^{\alpha}_y\right)U^{n-1}_{j,k},
\end{align}
for $1\leqslant j\leqslant M_x-1, 1\leqslant k\leqslant M_y-1, 1\leqslant n\leqslant N-1$. {For such a time-iterating scheme, an initialization is always needed for starting the loop. Considering the given initial data,  the solution at the initial step is then given by}
\begin{equation}\label{scheme2}
U^{0}_{j,k}=(u_0)_{j,k}=u_0(x_j,y_k) \quad 0<j<M_x, \quad 0<k<M_y.
\end{equation}

Moreover, Since the linear scheme (\ref{scheme1}) involves three time levels, {the first step values $\lbrace U^1_{j,k} \rbrace$ are also indispensable to begin the loop. We accomplish this by using Taylor's expansion:}
\begin{align}
u^1=&\;u_0+\tau u_t(x,0)+\frac{\tau^2}{2}u_{tt}(x,\bar{\tau})\nonumber\\
=&\;u_0+\tau\left(\mathbf{i}L^{\alpha}_xu_0+\mathbf{i}L^{\alpha}_yu_0+\mathbf{i}|u_0|^2u_0\right)+\frac{\tau^2}{2}u_{tt}(x,\bar{\tau}),
\end{align}
for some $\bar{\tau}\in (0,\tau)$, and equation (\ref{equation2}) {is used here to substitute the time derivative.} Hence in the numerical {approximation}, $\lbrace U^1_{j,k} \rbrace$ can be  obtained by  the following second-order scheme
\begin{align}\label{scheme3}
U^1_{j,k}&=U^0_{j,k}+\tau\left(\mathbf{i}(\mathcal{L}^{\alpha}_xu_0)_{j,k}
+\mathbf{i}(\mathcal{L}^{\alpha}_yu_0)_{j,k}+\mathbf{i}|(u_0)_{j,k}|^2(u_0)_{j,k}\right),
\end{align}
for $1\leqslant j\leqslant M_x-1, 1\leqslant k\leqslant M_y-1$.

\section{Theoretical analysis}\label{section4}
The following conclusions are essential for the theoretical analysis of the numerical solution.
\begin{lemma}\label{Lemmadelta}
For any $U,V\in \Omega^{0}_h$, {the following product can be taken apart}:
\begin{align}
\delta_t \left( U^n V^n \right) = U^{\bar{n}}\delta_t V^n + V^{\bar{n}}\delta_t U^n.
\end{align}
The multiplication in the above formula and the proof below is an element-wise operation of grid functions in $\Omega^{0}_h$.
\end{lemma}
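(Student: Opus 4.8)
The plan is to verify this identity by direct substitution of the two difference operators and expansion, since it is purely algebraic. Because the multiplication is understood element-wise, I would fix an arbitrary grid point $(j,k)$, suppress it from the notation, and abbreviate $a = U^{n+1}$, $b = U^{n-1}$, $c = V^{n+1}$, $d = V^{n-1}$, so that the claim collapses to an identity in four scalars.

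With this notation, and recalling $U^{\bar n} = (a+b)/2$, $V^{\bar n} = (c+d)/2$, $\delta_t U^n = (a-b)/(2\tau)$ and $\delta_t V^n = (c-d)/(2\tau)$, the right-hand side becomes
\begin{align}
U^{\bar n}\delta_t V^n + V^{\bar n}\delta_t U^n = \frac{1}{4\tau}\Big[(a+b)(c-d) + (c+d)(a-b)\Big].\notag
\end{align}
I would then expand the two products inside the bracket. The four ``diagonal'' contributions combine to $2ac - 2bd$, while the four mixed terms $-ad$, $+bc$, $-bc$, $+ad$ cancel in pairs, so the bracket reduces to $2(ac-bd)$ and the right-hand side equals $(ac-bd)/(2\tau)$.

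Finally, I would identify $(ac-bd)/(2\tau) = (U^{n+1}V^{n+1} - U^{n-1}V^{n-1})/(2\tau)$ as exactly $\delta_t(U^n V^n)$, which closes the argument. The only step needing a moment of care is the cancellation of the cross terms $U^{n+1}V^{n-1}$ and $U^{n-1}V^{n+1}$: this is where the naive Leibniz rule for a product of averages would leave a remainder, and it is precisely the symmetry between the roles of $U$ and $V$ in the two summands that forces these terms to vanish. Beyond this observation there is no genuine obstacle, the remainder being a routine verification.
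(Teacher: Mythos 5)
Your proof is correct and is essentially the same elementary algebraic verification as the paper's: the paper writes $\delta_t(U^nV^n)$ in two asymmetric ways, namely $\delta_t U^n\, V^{n+1} + U^{n-1}\delta_t V^n$ and $\delta_t V^n\, U^{n+1} + V^{n-1}\delta_t U^n$, and averages them, which is precisely your cross-term cancellation read in the opposite direction (left-hand side to right-hand side instead of right to left). No gap; the identity is settled either way by the same four-term expansion.
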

\begin{proof}
Notice that
\begin{align*}
\delta_t\left( U^n V^n \right) =&\; \dfrac{U^{n+1}V^{n+1} - U^{n-1}V^{n-1}}{2\tau} \notag\\
                               =&\; \dfrac{\left( U^{n+1}-U^{n-1} \right)V^{n+1} + U^{n-1}\left( V^{n+1}-V^{n-1} \right)}{2\tau} \notag\\
                               =&\; \delta_t U^n V^{n+1} + U^{n-1}\delta_t V^n.
\end{align*}
On the other hand, {we can compute symmetrically}
\begin{align*}
\delta_t\left( U^n V^n \right) = &\;\dfrac{U^{n+1}V^{n+1} - U^{n-1}V^{n-1}}{2\tau}\notag\\
                               = &\;\dfrac{\left( V^{n+1}-V^{n-1} \right)U^{n+1} + V^{n-1}\left( U^{n+1}-U^{n-1} \right)}{2\tau} \notag\\
                               = &\;\delta_t V^n U^{n+1} + V^{n-1}\delta_t U^n.
\end{align*}
Taking the average of the two equations, we conclude that
\begin{align*}
\delta_t\left( U^n V^n \right) = \dfrac{U^{n+1}+U^{n-1}}{2}\delta_t V^n  + \dfrac{V^{n+1}+V^{n-1}}{2}\delta_t U^n = U^{\bar{n}}\delta_t V^n + V^{\bar{n}}\delta_t U^n.
\end{align*}
\end{proof}

 \begin{lemma}\label{Lemma4}
For any $ U,V\in \Omega^{0}_h$,  there exists a linear operator $\Lambda^{\alpha}$ such that
 \begin{align}\label{lemma4eq}
\left(-(\mathcal{L}^{\alpha}_x+\mathcal{L}^{\alpha}_y)U,V\right)=(\Lambda^{\alpha}U,\Lambda^{\alpha}V).
\end{align}
\end{lemma}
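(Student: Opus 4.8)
The plan is to recognize $A:=-(\mathcal{L}^{\alpha}_x+\mathcal{L}^{\alpha}_y)$ as a real symmetric positive definite operator on the finite-dimensional space of grid functions vanishing on $\partial\Omega$, and to take $\Lambda^{\alpha}$ to be its self-adjoint positive square root. First I would fix an ordering of the interior grid points and represent each $U\in\Omega^0_h$ as a column vector; by the finite-domain formulas for $\mathcal{L}^{\alpha}_x$ and $\mathcal{L}^{\alpha}_y$, the operator $A$ becomes multiplication by a real matrix whose $x$-direction entries are $h_x^{-\alpha}c^{\alpha}_{j-s}$ (and analogously in $y$). The crucial structural input is the symmetry $c^{\alpha}_s=c^{\alpha}_{-s}$ supplied by Lemma~\ref{Lemma1}, which forces $A$ to be a \emph{real symmetric} matrix; consequently $A$ is self-adjoint for the Hermitian inner product \eqref{notation1}, i.e. $(AU,V)=(U,AV)$ for all $U,V\in\Omega^0_h$, since for a real symmetric matrix one has $(AU,V)=\sum_{s}U_s(A\bar V)_s=(U,AV)$.

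Next I would establish that $A$ is positive definite, and this is precisely where Lemma~\ref{Lemma3} enters. For every $U\in\Omega^0_h$, extended by zero to $Z_h$, the lower bound there gives $(AU,U)=\left(-(\mathcal{L}^{\alpha}_x+\mathcal{L}^{\alpha}_y)U,U\right)\geqslant(2/\pi)^{\alpha}|U|^2_{H^{\alpha/2}}\geqslant0$, and the Fourier representation of the seminorm shows that $|U|^2_{H^{\alpha/2}}=0$ requires $\bigl(|k_1|^{\alpha}+|k_2|^{\alpha}\bigr)|\hat U|^2\equiv0$, hence $\hat U\equiv0$ and $U=0$. Thus $A$ is symmetric positive definite.

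With symmetry and positivity in hand, the rest is routine linear algebra: diagonalize $A=QDQ^{\mathsf T}$ with $Q$ real orthogonal and $D$ diagonal with strictly positive entries, and set $\Lambda^{\alpha}:=QD^{1/2}Q^{\mathsf T}$, the unique symmetric positive definite square root of $A$. Then $\Lambda^{\alpha}$ is a real, hence self-adjoint, linear operator on $\Omega^0_h$ with $(\Lambda^{\alpha})^2=A$, so that for all $U,V\in\Omega^0_h$,
\begin{align*}
\left(-(\mathcal{L}^{\alpha}_x+\mathcal{L}^{\alpha}_y)U,V\right)=(AU,V)=\left((\Lambda^{\alpha})^2U,V\right)=\left(\Lambda^{\alpha}U,\Lambda^{\alpha}V\right),
\end{align*}
which is exactly \eqref{lemma4eq}.

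I expect the only point needing genuine care to be the positive definiteness, which I would not try to verify by hand from the coefficients $c^{\alpha}_j$ but instead read off directly from the bound in Lemma~\ref{Lemma3}; the symmetry of $A$ is immediate from $c^{\alpha}_s=c^{\alpha}_{-s}$, and the existence of the square root is then standard. An equivalent and perhaps more transparent route, matching the Fourier style of Lemmas~\ref{Lemma2}--\ref{Lemma3}, is to define $\Lambda^{\alpha}$ as the Fourier multiplier with symbol $\left(h_x^{-\alpha}\left|2\sin(k_1h_x/2)\right|^{\alpha}+h_y^{-\alpha}\left|2\sin(k_2h_y/2)\right|^{\alpha}\right)^{1/2}$ and to obtain \eqref{lemma4eq} at once from Parseval's identity \eqref{parseval}.
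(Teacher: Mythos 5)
Your proof is correct, and its core construction coincides with the paper's: represent $-(\mathcal{L}^{\alpha}_x+\mathcal{L}^{\alpha}_y)$ on the interior grid as a real symmetric positive definite matrix and take $\Lambda^{\alpha}$ to be its symmetric positive definite square root $QD^{1/2}Q^{\mathsf T}$, which is exactly the paper's $\mathbf{L}=\mathbf{P}\mathbf{A}^{1/2}\mathbf{P}^{T}$ built from $\mathbf{D}=\mathbf{I}_{M_y-1}\otimes\mathbf{C}_{x}+\mathbf{C}_{y}\otimes\mathbf{I}_{M_x-1}$. Where you genuinely differ is in how the symmetric positive definite hypothesis is certified: the paper cites \cite{WH2015} for the fact that the Toeplitz blocks $\mathbf{C}_x$, $\mathbf{C}_y$ are real symmetric positive definite and then invokes the Kronecker-sum structure, whereas you read symmetry off the coefficient identity $c^{\alpha}_s=c^{\alpha}_{-s}$ from Lemma~\ref{Lemma1} and obtain strict positivity from the lower bound in Lemma~\ref{Lemma3} combined with the observation that $|U|_{H^{\alpha/2}}=0$ forces $\hat U\equiv 0$. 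Your justification is more self-contained, relying only on results proved in the paper (together with the zero-extension convention the paper sets up precisely so that the infinite-grid lemmas apply on $\Omega^0_h$), at the cost of a slightly longer argument; the paper's version is shorter but imports an external fact. Your closing Fourier-multiplier alternative is also valid and arguably the most transparent way to see \eqref{lemma4eq}, with the one caveat that the multiplier operator produces functions on all of $Z_h$ rather than elements of $\Omega^0_h$, so the inner products must then be read in the sense of \eqref{notation2}; this is permissible since the paper explicitly identifies the inner products \eqref{notation2} and \eqref{notation1} for zero-extended grid functions.
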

\begin{proof}
 Let
\begin{align}
\mathbf{C}_x&=\frac{1}{h^{\alpha}_x}\begin{pmatrix}
c^{\alpha}_0& c^{\alpha}_{-1}&\cdots&c^{\alpha}_{-M_x+2}\\
c^{\alpha}_1& c^{\alpha}_0&\cdots&c^{\alpha}_{-M_x+3}\\
\vdots& \vdots&\ddots&\vdots\\
c^{\alpha}_{M_x-2}& c^{\alpha}_{M_x-3}&\cdots&c^{\alpha}_0\\
\end{pmatrix}, \quad \mathbf{C}_y=\frac{1}{h^{\alpha}_y}\begin{pmatrix}
c^{\alpha}_0& c^{\alpha}_{-1}&\cdots&c^{\alpha}_{-M_y+2}\\
c^{\alpha}_1& c^{\alpha}_0&\cdots&c^{\alpha}_{-M_y+3}\\
\vdots& \vdots&\ddots&\vdots\\
c^{\alpha}_{M_y-2}& c^{\alpha}_{M_y-3}&\cdots&c^{\alpha}_0\\
\end{pmatrix},\notag
\end{align}
$$\mathbf{D}=\mathbf{I}_{M_y-1}\otimes \mathbf{C}_{x}+\mathbf{C}_{y}\otimes \mathbf{I}_{M_x-1},$$
$$\mathcal{U}=(U_{1,1},\cdots,U_{M_x-1,1},U_{1,2},\cdots,U_{M_x-1,2},\cdots,U_{1,M_y-1},\cdots,U_{M_x-1,M_y-1})^T,$$
$$\mathcal{V}=(V_{1,1},\cdots,V_{M_x-1,1},V_{1,2},\cdots,V_{M_x-1,2},\cdots,V_{1,M_y-1},\cdots,V_{M_x-1,M_y-1})^T.$$
Since $\mathbf{C}_x$ and $\mathbf{C}_y$ are {both} real symmetric positive definite matrices~\cite{WH2015},
$\mathbf{D}$ is also a real symmetric positive definite matrix. Form the spectral theorem, there exist a real orthogonal
matrix $\mathbf{P}$ and a diagonal matrix $\mathbf{A}$ with positive diagonal entries such that
$$\mathbf{D}=\mathbf{PAP}^T=(\mathbf{PA}^{\frac{1}{2}}\mathbf{P}^T)(\mathbf{PA}^{\frac{1}{2}}\mathbf{P}^T)^T=\mathbf{L}^T\mathbf{L},$$
where $\mathbf{L}=\mathbf{PA}^{\frac{1}{2}}\mathbf{P}^T$ is also a real symmetric positive definite matrix.

Now that
 \begin{align}
-(\mathcal{L}^{\alpha}_x+\mathcal{L}^{\alpha}_y)U=\mathbf{D}\mathcal{U},
\end{align}
hence
 \begin{align}
\left(-(\mathcal{L}^{\alpha}_x+\mathcal{L}^{\alpha}_y)U,V\right)=h_xh_y\mathcal{V}^H \mathbf{D}\mathcal{U}=h_xh_y\mathcal{V}^H\mathbf{L}^T\mathbf{L}\mathcal{U}=h_xh_y(\mathbf{L}\mathcal{V})^H\mathbf{L}\mathcal{U}=(\mathbf{L}U,\mathbf{L}V),
\end{align}
where $\mathcal{V}^H$ is the Hermitian transpose of $\mathcal{V}$.
If we define the linear operator $\Lambda^{\alpha}U=\mathbf{L}U$ for any $ U\in \Omega^{0}_h$ , then we can obtain (\ref{lemma4eq}).
\end{proof}

By using Lemma \ref{Lemma4}, the following lemma is easy to verify.
 \begin{lemma}\label{Lemma5}
 For any $ U^n\in \Omega^{0}_h$, we have
 \begin{align}
&Im\left((\mathcal{L}^{\alpha}_x+\mathcal{L}^{\alpha}_y)U,U\right)=0,\\
&Re\left(-(\mathcal{L}^{\alpha}_x+\mathcal{L}^{\alpha}_y)U^{\bar{n}},\delta_{t} U^n\right)=\frac{1}{4\tau}\left(\parallel\Lambda^{\alpha} U^{n+1}\parallel_2^2-\parallel\Lambda^{\alpha} U^{n-1}\parallel_2^2\right).
\end{align}
\end{lemma}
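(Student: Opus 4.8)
The plan is to reduce both identities to Lemma~\ref{Lemma4}, which recasts the action of $\mathcal{L}^{\alpha}_x+\mathcal{L}^{\alpha}_y$ as a genuine inner product $(\Lambda^{\alpha}\cdot,\Lambda^{\alpha}\cdot)$ through the self-adjoint square-root factor $\Lambda^{\alpha}=\mathbf{L}$. Once the operator is symmetrized in this way, both claims follow from elementary manipulations of the complex inner product together with the linearity of $\Lambda^{\alpha}$.

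For the first identity, I would set $V=U$ in \eqref{lemma4eq}, giving
\begin{align}
\left(-(\mathcal{L}^{\alpha}_x+\mathcal{L}^{\alpha}_y)U,U\right)=(\Lambda^{\alpha}U,\Lambda^{\alpha}U)=\|\Lambda^{\alpha}U\|_2^2\geqslant 0.\notag
\end{align}
Since the right-hand side is real, the left-hand side is real as well, so $\left((\mathcal{L}^{\alpha}_x+\mathcal{L}^{\alpha}_y)U,U\right)$ has vanishing imaginary part, which is exactly the first assertion.

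For the second identity, I would first invoke \eqref{lemma4eq} with the substitutions $U\mapsto U^{\bar n}$ and $V\mapsto\delta_t U^n$, and then exploit the linearity of $\Lambda^{\alpha}$ together with the definitions $U^{\bar n}=\tfrac12(U^{n+1}+U^{n-1})$ and $\delta_t U^n=\tfrac1{2\tau}(U^{n+1}-U^{n-1})$. Writing $A:=\Lambda^{\alpha}U^{n+1}$ and $B:=\Lambda^{\alpha}U^{n-1}$, this yields
\begin{align}
\left(-(\mathcal{L}^{\alpha}_x+\mathcal{L}^{\alpha}_y)U^{\bar n},\delta_t U^n\right)=\frac{1}{4\tau}\left(A+B,\,A-B\right)=\frac{1}{4\tau}\Big(\|A\|_2^2-\|B\|_2^2+(B,A)-(A,B)\Big).\notag
\end{align}
The two cross terms are complex conjugates of each other by the conjugate symmetry of the inner product \eqref{notation1}, so $(B,A)-(A,B)=-2\mathbf{i}\,Im(A,B)$ is purely imaginary. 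Taking real parts therefore annihilates the cross terms and leaves $\tfrac{1}{4\tau}(\|\Lambda^{\alpha}U^{n+1}\|_2^2-\|\Lambda^{\alpha}U^{n-1}\|_2^2)$, which is the claimed formula.

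The argument is essentially routine once Lemma~\ref{Lemma4} is in hand; the only point requiring mild care is the bookkeeping of the cross terms $(A,B)$ and $(B,A)$, where one must use that the discrete inner product is \emph{conjugate}-symmetric rather than symmetric, so that these terms cancel precisely under $Re(\cdot)$ instead of reinforcing. There is no genuine analytical obstacle, which is why the text describes the statement as easy to verify.
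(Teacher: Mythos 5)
Your proposal is correct and follows exactly the route the paper intends: the paper gives no written proof beyond the remark that the lemma follows from Lemma~\ref{Lemma4}, and your argument—substituting into \eqref{lemma4eq}, using the linearity of $\Lambda^{\alpha}$ on $U^{\bar n}$ and $\delta_t U^n$, and cancelling the conjugate-symmetric cross terms under $Re(\cdot)$—is precisely that verification. No gaps.
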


{
 \begin{lemma}\label{Lemma6}
 (Discrete Gronwall's inequality~\cite{Hu2015,Holte}). Let $\{u_k\}$ and $\{w_k\}$ be nonnegative sequences and $\alpha$ {be} a nonnegative constant. They together satisfy
 \begin{equation*}
   u_n\leqslant \alpha + \sum_{0\leqslant k<n}w_k u_k \quad \textrm{for } n\geq 0.
 \end{equation*}
Then for all $n\geq 0$, it holds that
\begin{equation*}
   u_n\leqslant \alpha \exp\left(\sum_{0\leqslant k<n} w_k\right).
\end{equation*}
\end{lemma}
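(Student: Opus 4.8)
The plan is to prove this standard discrete Gronwall inequality by introducing the right-hand side as a majorizing sequence and deriving a simple multiplicative recursion for it. First I would define the partial sums
\begin{equation*}
S_n = \alpha + \sum_{0\leqslant k<n} w_k u_k,
\end{equation*}
so that the hypothesis reads exactly $u_n \leqslant S_n$ for every $n \geqslant 0$; note in particular that $S_0 = \alpha$, since the sum over $0 \leqslant k < 0$ is empty. This also handles the base case of the claim, because the target bound at $n=0$ is $\alpha\exp(0)=\alpha$.

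Next I would establish a one-step recursion for $\{S_n\}$. Directly from the definition,
\begin{equation*}
S_{n+1} - S_n = w_n u_n,
\end{equation*}
and since $w_n \geqslant 0$ together with $u_n \leqslant S_n$ gives $w_n u_n \leqslant w_n S_n$, this yields $S_{n+1} \leqslant (1 + w_n) S_n$. Iterating this inequality downward from $S_0 = \alpha$ produces
\begin{equation*}
S_n \leqslant \alpha \prod_{0\leqslant k<n} (1 + w_k).
\end{equation*}

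To convert the product into the stated exponential, I would invoke the elementary bound $1 + x \leqslant e^x$, valid for every real $x$ and in particular for $x = w_k \geqslant 0$, so that
\begin{equation*}
\prod_{0\leqslant k<n}(1 + w_k) \leqslant \prod_{0\leqslant k<n} e^{w_k} = \exp\Big(\sum_{0\leqslant k<n} w_k\Big).
\end{equation*}
Combining this with $u_n \leqslant S_n$ then delivers the claim $u_n \leqslant \alpha \exp\big(\sum_{0\leqslant k<n} w_k\big)$ for all $n\geqslant 0$.

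There is essentially no hard step in this argument: everything rests on the majorant recursion, and the \emph{only} points requiring care are that the nonnegativity of $w_n$ is precisely what licenses replacing $u_n$ by its upper bound $S_n$ inside $w_n u_n$, and that the empty-sum convention correctly anchors $S_0 = \alpha$. One could instead argue by a direct induction on $n$, but I prefer the partial-sum route because it isolates the clean product bound and avoids having to carry the exponential through an inductive hypothesis.
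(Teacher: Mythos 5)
Your proof is correct and complete: the majorant recursion $S_{n+1}\leqslant(1+w_n)S_n$ is licensed exactly by $w_n\geqslant 0$ and $u_n\leqslant S_n$, the iteration is valid since each factor $1+w_k$ is positive, and the bound $1+x\leqslant e^x$ finishes the argument. Note that the paper itself offers no proof of this lemma at all — it simply cites the references \cite{Hu2015,Holte} — and your argument is the standard one given in such sources (Holte's note proves it via the same product bound), so there is nothing to reconcile; your write-up is a valid self-contained replacement for the citation.
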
}

\subsection{Energy conservation}\label{conservation}
It {can be verified} that \eqref{scheme1} on the mesh grids is equivalent to
\begin{equation}\label{scheme1_re}
 \mathbf{i} \frac{U^{n+1}_{j,k}-U^{n-1}_{j,k}}{2\tau}+ \left(\mathcal{L}^{\alpha}_x\frac{U_{j,k}^{n+1}+U_{j,k}^{n-1}}{2}+\mathcal{L}^{\alpha}_y\frac{U_{j,k}^{n+1}+U_{j,k}^{n-1}}{2}\right)+ |U_{j,k}^n|^2\frac{U_{j,k}^{n+1}+U_{j,k}^{n-1}}{2}=0.
\end{equation}

Now we {prove} the following discrete energy conservative law, which is similar to the continuous case.
\begin{theorem}\label{thm1}
  The schemes \eqref{scheme1_re}{ with the initialization} \eqref{scheme3} are conservative in the sense {of discrete energy}:
  \begin{align}
   E^n  = E^0,\quad 1 \leqslant n \leqslant N-1,\label{Conser1}
  \end{align}
  where
  \begin{equation}\label{Conser2}
    E^n = \frac{1}{2} (\|\Lambda^{\alpha}U^{n+1}\|_2^2+\|\Lambda^{\alpha}U^n\|_2^2) -\frac{1}{2}h_xh_y\sum_{j=1}^{M_x-1}\sum_{k=1}^{M_y-1}|U_{j,k}^n|^2|U_{j,k}^{n+1}|^2.
  \end{equation}
\end{theorem}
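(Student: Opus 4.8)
The plan is to prove the stronger statement $E^n = E^{n-1}$ for every $1\leqslant n\leqslant N-1$, from which $E^n=E^0$ follows by a one-line induction on $n$. To obtain this recursion I would take the discrete inner product of the scheme \eqref{scheme1_re}, written compactly as
\[
\mathbf{i}\,\delta_t U^n + (\mathcal{L}^{\alpha}_x+\mathcal{L}^{\alpha}_y)U^{\bar n} + |U^n|^2 U^{\bar n} = 0,
\]
with $\delta_t U^n=(U^{n+1}-U^{n-1})/(2\tau)$, and then take the real part of the resulting scalar identity. The guiding idea is that the real part makes each of the three terms collapse into a difference of consecutive time levels, which is precisely the telescoping structure carried by $E^n-E^{n-1}$.

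For the three terms I would argue as follows. The dispersive-in-time term yields $\mathbf{i}\|\delta_t U^n\|_2^2$, which is purely imaginary, so its real part is zero. For the fractional term, Lemma~\ref{Lemma5} applies verbatim and gives $Re\big((\mathcal{L}^{\alpha}_x+\mathcal{L}^{\alpha}_y)U^{\bar n},\delta_t U^n\big)=-\tfrac{1}{4\tau}\big(\|\Lambda^{\alpha}U^{n+1}\|_2^2-\|\Lambda^{\alpha}U^{n-1}\|_2^2\big)$, converting the linear part into a difference of the quadratic form $\|\Lambda^{\alpha}\cdot\|_2^2$ at levels $n+1$ and $n-1$.

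The main obstacle is the nonlinear term $Re\big(|U^n|^2 U^{\bar n},\delta_t U^n\big)$, and this is where the computation must be done by hand. I would expand it pointwise: since $|U^n_{j,k}|^2$ is real it factors out, and $U^{\bar n}_{j,k}(\delta_t U^n_{j,k})^{*}=\tfrac{1}{4\tau}(U^{n+1}_{j,k}+U^{n-1}_{j,k})(\overline{U^{n+1}_{j,k}}-\overline{U^{n-1}_{j,k}})$. Expanding this product, the cross terms $U^{n-1}_{j,k}\overline{U^{n+1}_{j,k}}-U^{n+1}_{j,k}\overline{U^{n-1}_{j,k}}$ are purely imaginary and vanish upon taking the real part, leaving $Re\big[U^{\bar n}_{j,k}(\delta_t U^n_{j,k})^{*}\big]=\tfrac{1}{4\tau}\big(|U^{n+1}_{j,k}|^2-|U^{n-1}_{j,k}|^2\big)$. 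Summing against $|U^n_{j,k}|^2$ then produces exactly $\tfrac{h_xh_y}{4\tau}\sum_{j,k}|U^n_{j,k}|^2\big(|U^{n+1}_{j,k}|^2-|U^{n-1}_{j,k}|^2\big)$, whose structure matches the quartic part of $E^n$ in \eqref{Conser2}.

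Finally, I would assemble the three pieces. Multiplying the real part of the inner-product identity by $4\tau$ gives
\[
\|\Lambda^{\alpha}U^{n+1}\|_2^2-\|\Lambda^{\alpha}U^{n-1}\|_2^2 = h_xh_y\sum_{j,k}|U^n_{j,k}|^2\big(|U^{n+1}_{j,k}|^2-|U^{n-1}_{j,k}|^2\big).
\]
Computing $E^n-E^{n-1}$ directly from \eqref{Conser2}, the quadratic part contributes $\tfrac12\big(\|\Lambda^{\alpha}U^{n+1}\|_2^2-\|\Lambda^{\alpha}U^{n-1}\|_2^2\big)$ and the quartic part contributes $-\tfrac12 h_xh_y\sum_{j,k}|U^n_{j,k}|^2\big(|U^{n+1}_{j,k}|^2-|U^{n-1}_{j,k}|^2\big)$; by the displayed identity these cancel exactly, so $E^n=E^{n-1}$ and the induction closes. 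The initialization \eqref{scheme3} enters only to guarantee that $E^0$ is well defined as the starting value of the conserved quantity.
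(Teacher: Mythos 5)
Your proposal is correct and follows essentially the same route as the paper: both take the discrete inner product of \eqref{scheme1_re} with $\pm\delta_t U^n$, take the real part, use Lemma~\ref{Lemma5} for the fractional term, and observe that the nonlinear term collapses to $\tfrac{1}{4\tau}h_xh_y\sum_{j,k}|U^n_{j,k}|^2\bigl(|U^{n+1}_{j,k}|^2-|U^{n-1}_{j,k}|^2\bigr)$, yielding the telescoping identity $E^n=E^{n-1}$. The only difference is that you spell out explicitly the pointwise cancellation of the purely imaginary cross terms in $U^{\bar n}_{j,k}(\delta_t U^n_{j,k})^{*}$, which the paper asserts without detail.
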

\begin{proof}
  Taking inner product of \eqref{scheme1_re} with $-\delta_tU^n$, we have
  \begin{equation}\label{Conser3}
    -\mathbf{i} \|\delta_tU^n\|_2^2 - (\mathcal{L}_x^{\alpha}U^{\bar{n}}+ \mathcal{L}_y^{\alpha} U^{\bar{n}}, \delta_t{U}^n) - (|U^n|^2U^{\bar{n}},\delta_t {U}^n)=0.
  \end{equation}
{Now consider the real part of }\eqref{Conser3}. {It follows from Lemma} \ref{Lemma4}{ and Lemma} \ref{Lemma5} {that}
  \begin{align}
    \frac{1}{4\tau} \left(\parallel\Lambda^{\alpha} U^{n+1}\parallel_2^2-\parallel\Lambda^{\alpha} U^{n-1}\parallel_2^2\right) - h_xh_y\sum_{j=1}^{M_x-1}\sum_{k=1}^{M_y-1}|U_{j,k}^n|^2\cdot \frac{1}{4\tau} (|U_{j,k}^{n+1}|^2-|U_{j,k}^{n-1}|^2)=0.\notag
  \end{align}
{Therefore},
\begin{align}
  &\frac{1}{2}(\|\Lambda^{\alpha}U^{n+1}\|_2^2+\|\Lambda^{\alpha}U^n\|_2^2) -\frac{1}{2} h_xh_y\sum_{j=1}^{M_x-1}\sum_{k=1}^{M_y-1}|U_{j,k}^n|^2\cdot |U_{j,k}^{n+1}|^2\notag\\
  =&\; \frac{1}{2}(\|\Lambda^{\alpha}U^{n}\|_2^2+\|\Lambda^{\alpha}U^{n-1}\|_2^2) -\frac{1}{2} h_xh_y\sum_{j=1}^{M_x-1}\sum_{k=1}^{M_y-1}|U_{j,k}^{n-1}|^2\cdot |U_{j,k}^{n}|^2.\notag
\end{align}
This completes the proof.
\end{proof}

\subsection{Solvability of difference scheme}

\begin{theorem}
 The finite difference scheme \eqref{Boundary}, (\ref{scheme1})--(\ref{scheme2}) and \eqref{scheme3} is uniquely solvable.
\end{theorem}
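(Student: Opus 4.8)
The plan is to reduce unique solvability to the nonsingularity of the coefficient matrix of the linear system determining $U^{n+1}$. First I would observe that the starting values $U^0$ and $U^1$ are prescribed explicitly by \eqref{scheme2} and \eqref{scheme3}, so no solvability question arises there; the only issue is the three-level recursion \eqref{scheme1}, in which, given $U^{n-1}$ and $U^n$, we must recover $U^{n+1}$. Since the nonlinear factor $|U^n_{j,k}|^2$ depends only on the already-known level $n$, the system for $U^{n+1}$ is genuinely linear, and for a square linear system nonsingularity of the coefficient matrix yields both existence and uniqueness at once.

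Next I would cast \eqref{scheme1} in matrix form using the ordering of Lemma~\ref{Lemma4}. Writing $\Lambda^n$ for the real diagonal matrix whose entries are $|U^n_{j,k}|^2\geqslant 0$, and recalling from Lemma~\ref{Lemma4} that $\mathbf{D}=-(\mathcal{L}^{\alpha}_x+\mathcal{L}^{\alpha}_y)$ acts as a real symmetric matrix, the operator multiplying $U^{n+1}$ becomes
$$A=\tfrac{1}{\tau}\mathbf{I}+\mathbf{i}\left(\mathbf{D}-\Lambda^n\right),$$
because $-\mathbf{i}(\mathcal{L}^{\alpha}_x+\mathcal{L}^{\alpha}_y)=\mathbf{i}\mathbf{D}$. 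Since $\mathbf{D}$ and $\Lambda^n$ are both real and symmetric, the matrix $H:=\mathbf{D}-\Lambda^n$ is Hermitian, so $A$ has the canonical form ``$\tfrac{1}{\tau}\mathbf{I}$ plus $\mathbf{i}$ times a Hermitian matrix''.

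To show $A$ is nonsingular it suffices to prove that $A\mathbf{v}=0$ forces $\mathbf{v}=0$. I would take the Hermitian inner product of $A\mathbf{v}=0$ with $\mathbf{v}$, obtaining $\tfrac{1}{\tau}\|\mathbf{v}\|_2^2+\mathbf{i}\,\mathbf{v}^{H}H\mathbf{v}=0$. As $H$ is Hermitian, the quadratic form $\mathbf{v}^{H}H\mathbf{v}$ is real, so $\mathbf{i}\,\mathbf{v}^{H}H\mathbf{v}$ is purely imaginary; taking the real part of the identity leaves $\tfrac{1}{\tau}\|\mathbf{v}\|_2^2=0$, whence $\mathbf{v}=0$. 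Thus $A$ is invertible, \eqref{scheme1} determines $U^{n+1}$ uniquely, and by induction on $n$ the entire scheme \eqref{Boundary}, \eqref{scheme1}--\eqref{scheme2}, \eqref{scheme3} is uniquely solvable. The argument is short, and the only point requiring care is the correct identification of the coefficient matrix with an operator of the form $\tfrac{1}{\tau}\mathbf{I}+\mathbf{i}H$ with $H$ Hermitian; once this structure is exposed, nonsingularity is automatic, so I do not anticipate any serious obstacle.
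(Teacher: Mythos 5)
Your proposal is correct and is essentially the paper's own argument: the paper likewise reduces solvability to showing the homogeneous linear system for $U^{n+1}$ has only the zero solution, takes the discrete inner product of that system with $U^{n+1}$, and uses the real symmetry of the fractional operator (via Lemma~\ref{Lemma4}) together with the realness of the $|U^n_{j,k}|^2$ coefficients to conclude $\frac{1}{\tau}\|U^{n+1}\|_2^2=0$. Your matrix formulation $A=\frac{1}{\tau}\mathbf{I}+\mathbf{i}(\mathbf{D}-\Lambda^n)$ with $\mathbf{D}-\Lambda^n$ Hermitian is just this same computation written in matrix language, so no substantive difference exists between the two proofs.
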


\begin{proof}
Since the schemes  \eqref{Boundary}, (\ref{scheme1})--(\ref{scheme2}), \eqref{scheme3} {are derived} form a linear system of equations,
it is sufficient to show that the homogeneous linear system has only zero solution.
The data of initial level and the first level have been uniquely determined by \eqref{scheme2} and \eqref{scheme3}.
Now we suppose that $U^1,\cdots,U^n$ have already been {uniquely} obtained, {then the iteration yields a new} linear system for $U^{n+1}$
\begin{align}
   \frac{\mathbf{i}}{\tau}U^{n+1}_{j,k}+ (\mathcal{L}^{\alpha}_x U_{j,k}^{n+1}+\mathcal{L}^{\alpha}_y U_{j,k}^{n+1})+ |U_{j,k}^n|^2 U_{j,k}^{n+1}=0.
\end{align}
Computing the inner product of $U^{n+1}$ with both sides of the above equation, and following the result from Lemma \ref{Lemma4}, we can show that $\|U^n\|_2=0$, $n=2,3,\cdots$.{ This implies, by a closed induction, }that
the finite difference schemes \eqref{Boundary}, (\ref{scheme1})--(\ref{scheme2}), \eqref{scheme3} are uniquely solvable.
\end{proof}

\subsection{$L^{\infty}$ convergence and stability}

Let $u(x,y,t)$ be the exact solution of problem (\ref{IVP1})-(\ref{IVP2}) and $U^n_{j,k}$ be the solution of the numerical schemes \eqref{Boundary}, (\ref{scheme1})-(\ref{scheme2}), \eqref{scheme3}. Define  $u^n_{j,k}=u(x_j,y_k,t_n)$. Then the error function is the difference between $u$ and $U$: $$e^n_{j,k}=u^n_{j,k}-U^n_{j,k},\quad j=1,2,\cdots, M_x-1, \;\; k=1,2,\cdots, M_y-1,\;\; n=1,2,\cdots,N.$$

By substituting $U$ with $u$ in the schemes \eqref{Boundary}, (\ref{scheme1})-(\ref{scheme2}), we can define the truncation errors as follows:
\begin{align}\label{trun1}
r^n_{j,k}=\frac{1}{2}\left(\frac{1}{\tau}-\mathbf{i}|u^n_{j,k}|^2-\mathbf{i}\mathcal{L}^{\alpha}_x-\mathbf{i}\mathcal{L}^{\alpha}_y\right)u^{n+1}_{j,k}-\frac{1}{2}\left(\frac{1}{\tau}+\mathbf{i}|u^n_{j,k}|^2+\mathbf{i}\mathcal{L}^{\alpha}_x+\mathbf{i}\mathcal{L}^{\alpha}_y\right)u^{n-1}_{j,k},
\end{align}
for $1\leqslant j\leqslant M_x-1, 1\leqslant k\leqslant M_y-1, 1\leqslant n \leqslant N-1$.

Through simple calculations, we see that
\begin{align}\label{trun2}
r^n_{j,k}  =\delta_tu^n_{j,k}-\mathbf{i}(\mathcal{L}^{\alpha}_x+\mathcal{L}^{\alpha}_y)u^{\bar{n}}_{j,k}-\mathbf{i}|u^n_{j,k}|^2u^{\bar{n}}_{j,k},
\end{align}
for $1\leqslant j\leqslant M_x-1, 1\leqslant k\leqslant M_y-1, 1\leqslant n \leqslant N-1$.

{However, it follows from} the difference scheme (\ref{scheme1}) that
\begin{align}\label{diff1}
0=\delta_tU^n_{j,k}-\mathbf{i}(\mathcal{L}^{\alpha}_x+\mathcal{L}^{\alpha}_y)U^{\bar{n}}_{j,k}-\mathbf{i}|U^n_{j,k}|^2U^{\bar{n}}_{j,k},
\end{align}
for $1\leqslant j\leqslant M_x-1, 1\leqslant k\leqslant M_y-1,  1\leqslant n \leqslant N-1$.

{This implies that we obtain a relation between the error and the truncation error, by} subtracting (\ref{trun2})  from (\ref{diff1}):
\begin{align}
r^n_{j,k}=&\delta_{t}e^n_{j,k}-\mathbf{i}(\mathcal{L}^{\alpha}_x+\mathcal{L}^{\alpha}_y)e^{\bar{n}}_{j,k}-\mathbf{i}P^n_{j,k},\label{truncation}
\end{align}
where \begin{align*}
P^n_{j,k}&=\left(|u^n_{j,k}|^2u^{\bar{n}}_{j,k}-|U^n_{j,k}|^2U^{\bar{n}}_{j,k}\right),
\end{align*}
for $1\leqslant j\leqslant M_x-1, 1\leqslant k\leqslant M_y-1,  1\leqslant n \leqslant N-1$.

 \begin{lemma}\label{Lemma7}
Suppose that the solution of problem (\ref{IVP1})-(\ref{IVP2}) is sufficiently smooth {and satisfies the homogeneous Dirichlet's boudary condition}. Then it holds that
\begin{align}
&|r^n_{j,k}|\leqslant C_R(\tau^2+h^2_x+h^2_y),\quad \left|\delta_t r^n_{j,k}\right|\leqslant C_R(\tau^2+h^2_x+h^2_y), \nonumber\\
 &\qquad 1\leqslant j\leqslant M_x-1, 1\leqslant k \leqslant M_y-1, 1\leqslant n \leqslant N-1,
\end{align}
where $C_R$ is a positive constant independent of $\tau$, $h_x$ and $h_y$.
\end{lemma}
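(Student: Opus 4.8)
The plan is to establish both estimates by direct Taylor expansion, working from the alternative representation \eqref{trun2} of the truncation error together with the spatial consistency supplied by Lemma~\ref{Lemma1} and the governing equation \eqref{equation2}. Throughout I assume that $u$ possesses as many bounded continuous derivatives, in both $t$ and the spatial variables, as the expansions below require; the phrase ``sufficiently smooth'' in the hypothesis is understood in exactly this sense, and all hidden coefficients will be maxima of such bounded derivatives over $\overline{\Omega}\times[0,T]$, hence independent of $\tau,h_x,h_y$.

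First I would expand the two temporal building blocks of \eqref{trun2} symmetrically about $t=t_n$, obtaining $\delta_t u^n_{j,k}=u_t(x_j,y_k,t_n)+\frac{\tau^2}{6}u_{ttt}(x_j,y_k,t_n)+O(\tau^4)$ and $u^{\bar n}_{j,k}=u(x_j,y_k,t_n)+\frac{\tau^2}{2}u_{tt}(x_j,y_k,t_n)+O(\tau^4)$. For the spatial piece, Lemma~\ref{Lemma1} gives $(\mathcal{L}^\alpha_x+\mathcal{L}^\alpha_y)u^{\bar n}_{j,k}=(L^\alpha_x+L^\alpha_y)u^{\bar n}_{j,k}+O(h_x^2+h_y^2)$, and substituting the expansion of $u^{\bar n}$ shows this equals $(L^\alpha_x+L^\alpha_y)u(x_j,y_k,t_n)+O(\tau^2+h_x^2+h_y^2)$; likewise $|u^n_{j,k}|^2u^{\bar n}_{j,k}=|u|^2u(x_j,y_k,t_n)+O(\tau^2)$. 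Inserting these three expansions into \eqref{trun2} and invoking \eqref{equation2} in the form $u_t=\mathbf{i}(L^\alpha_x+L^\alpha_y)u+\mathbf{i}|u|^2u$, the exact-solution terms cancel identically, leaving a sum of terms each carrying a factor $\tau^2$, $h_x^2$, or $h_y^2$ multiplied by a bounded derivative of $u$. Taking the supremum of these coefficients produces a constant $C_R$ and the first estimate $|r^n_{j,k}|\leqslant C_R(\tau^2+h_x^2+h_y^2)$.

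The hard part will be the second estimate, because the naive bound $|\delta_t r^n_{j,k}|\leqslant(|r^{n+1}_{j,k}|+|r^{n-1}_{j,k}|)/(2\tau)$ only gives $O(\tau+h^2/\tau)$ and destroys the accuracy. The resolution I would use is to exploit that the leading error coefficients found above are themselves smooth functions of $t$. Concretely, I would record the expansion in the structured form $r^n_{j,k}=\tau^2\Phi(x_j,y_k,t_n)+h_x^2\Psi_x(x_j,y_k,t_n)+h_y^2\Psi_y(x_j,y_k,t_n)+(\text{higher order})$, where $\Phi,\Psi_x,\Psi_y$ are built from bounded derivatives of $u$ and are independent of the step sizes, then apply $\delta_t$ term by term. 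Since $\delta_t$ of any smooth grid function is a consistent approximation of $\partial_t$ and is therefore uniformly bounded whenever that function has a bounded first time-derivative, each of $\delta_t\Phi$, $\delta_t\Psi_x$, $\delta_t\Psi_y$ stays $O(1)$ while the prefactors $\tau^2,h_x^2,h_y^2$ are preserved.

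Collecting terms once more then yields $|\delta_t r^n_{j,k}|\leqslant C_R(\tau^2+h_x^2+h_y^2)$, with $C_R$ enlarged to absorb the extra time-derivatives of the coefficients. The only place where additional regularity of $u$ is genuinely consumed is precisely here: controlling $\delta_t\Phi$ requires one more bounded time-derivative of $u$ than controlling $\Phi$ itself, which is the quantitative content of the ``sufficiently smooth'' assumption.
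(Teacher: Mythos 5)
Your proof is correct and follows essentially the same route as the paper's: Taylor expansion about $t_n$ combined with the second-order spatial consistency of Lemma~\ref{Lemma1} and cancellation via the governing equation \eqref{equation2} gives the first bound, and the second bound is obtained exactly as the paper intends, by applying $\delta_t$ to the structured representation of $r^n_{j,k}$ in which the $\tau^2$, $h_x^2$, $h_y^2$ prefactors multiply smooth functions of time. The only difference is bookkeeping: the paper records its expansions with integral-remainder forms of Taylor's theorem, whereas you use pointwise expansions with $O(\cdot)$ remainders, which amounts to the same argument under the same smoothness hypothesis.
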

\begin{proof}
 Using the Taylor's expansion of the exact solution $u(x,y,t)$ at $(x_j,y_k,t_n)$, we have
\begin{align}
&\frac{u(x_j,y_k,t_{n+1})-u(x_j,y_k,t_{n-1})}{2\tau}-\partial_tu(x_j,y_k,t_n)\notag\\
=&\;\frac{\tau^2}{4}\int^{1}_{0}\left(\frac{\partial^3 u(x_j,y_k, t_n+s\tau)}{\partial t^3}+\frac{\partial^3 u(x_j,y_k, t_n-s\tau)}{\partial t^3}\right)(1-s)^2ds.\notag
\end{align}
From the proof in~\cite{Celik}, we have
\begin{align}
&\mathcal{L}^{\alpha}_x\frac{u(x_j,y_k,t_{n+1})+u(x_j,y_k,t_{n-1})}{2}-L^{\alpha}_xu(x_j,y_k,t_n)\nonumber\\
=&\; \left(\mathcal{L}^{\alpha}_x\frac{u(x_j,y_k,t_{n+1})+u(x_j,y_k,t_{n-1})}{2}-L^{\alpha}_x\frac{u(x_j,y_k,t_{n+1})+u(x_j,y_k,t_{n-1})}{2}\right)\nonumber\\
 &\; +L^{\alpha}_x\left(\frac{u(x_j,y_k,t_{n+1})+u(x_j,y_k,t_{n-1})}{2}-u(x_j,y_k,t_n)\right)\nonumber\\
=&\; \frac{\tau^2}{2}L^{\alpha}_x\int^{1}_{0}\left(\frac{\partial^2 u(x_j,y_k, t_n+s\tau)}{\partial t^2}+\frac{\partial^2 u(x_j,y_k, t_n-s\tau)}{\partial t^2}\right)(1-s)ds+O(h^2_x).\notag
\end{align}
Similarly,
\begin{align}
&\mathcal{L}^{\alpha}_y\frac{u(x_j,y_k,t_{n+1})+u(x_j,y_k,t_{n-1})}{2}-L^{\alpha}_yu(x_j,y_k,t_n)\nonumber\\
=&\; \frac{\tau^2}{2}L^{\alpha}_y\int^{1}_{0}\left(\frac{\partial^2 u(x_j,y_k, t_n+s\tau)}{\partial t^2}+\frac{\partial^2 u(x_j,y_k, t_n-s\tau)}{\partial t^2}\right)(1-s)ds+O(h^2_y).\notag
\end{align}
In addition, the nonlinear term can also be expanded
\begin{align}
&|u(x_j,y_k,t_n)|^2\frac{u(x_j,y_k,t_{n+1})+u(x_j,y_k,t_{n-1})}{2}-|u(x_j,y_k,t_n)|^2u(x_j,y_k,t_n)\nonumber\\
=&\; |u(x_j,y_k,t_n)|^2\left(\frac{u(x_j,y_k,t_{n+1})+u(x_j,y_k,t_{n-1})}{2}-u(x_j,y_k,t_n)\right)\nonumber\\
=&\; |u(x_j,y_k,t_n)|^2\frac{\tau^2}{2}\int^{1}_{0}\left(\frac{\partial^2 u(x_j,y_k, t_n+s\tau)}{\partial t^2}+\frac{\partial^2 u(x_j,y_k, t_n-s\tau)}{\partial t^2}\right)(1-s)ds.\notag
\end{align}
{Summarizing all the computations, it holds}
\begin{align}
r^n_{j,k}=&\; \frac{\tau^2}{4}\int^{1}_{0}\left(\frac{\partial^3 u(x_j,y_k, t_n+s\tau)}{\partial t^3}+\frac{\partial^3 u(x_j,y_k, t_n-s\tau)}{\partial t^3}\right)(1-s)^2ds\nonumber\\
&\; -\mathbf{i}\frac{\tau^2}{2}(L^{\alpha}_x+L^{\alpha}_y)\int^{1}_{0}\left(\frac{\partial^2 u(x_j,y_k, t_n+s\tau)}{\partial t^2}+\frac{\partial^2 u(x_j,y_k, t_n-s\tau)}{\partial t^2}\right)(1-s)ds\nonumber\\
&\; -\frac{\tau^2}{2}\mathbf{i}|u(x_j,y_k,t_n)|^2\int^{1}_{0}\left(\frac{\partial^2 u(x_j,y_k, t_n+s\tau)}{\partial t^2}+\frac{\partial^2 u(x_j,y_k, t_n-s\tau)}{\partial t^2}\right)(1-s)ds\nonumber\\
&\; +O(h^2_x)+O(h^2_y).\label{trun11}
\end{align}
Therefore,
\begin{align}
r^n_{j,k}=O(\tau^2+h^2_x+h^2_y).\notag
\end{align}
By using (\ref{trun11}), it will be easy to see that
\begin{align}
\delta_t r^n_{j,k}=O(\tau^2+h^2_x+h^2_y),\quad  1\leqslant j\leqslant M_x-1,1\leqslant k \leqslant M_y-1, 1\leqslant n \leqslant N-1,\notag
\end{align}
completing the proof.
\end{proof}

Following a similar proof of Lemma 9 in \cite{He2018}, we can obtain the lemma below.
 \begin{lemma}\label{Lemma8}
 Suppose that the solution of problem (\ref{IVP1})-(\ref{IVP2}) is sufficiently smooth and {vanishes} as $(x,y)\rightarrow\infty$.
 Then for the difference scheme (\ref{scheme3}), we have
 \begin{equation}\label{e1}
\begin{aligned}
|e^1_{j,k}|&\leqslant C_e(\tau^2+\tau h^2_x+\tau h^2_y),\quad  |(\mathcal{L}^{\alpha}_x +\mathcal{L}^{\alpha}_y) e^1_{j,k}|\leqslant C_e(\tau^2+\tau h^{2}_x+\tau h^{2}_y), \notag\\
& 1\leqslant j\leqslant M_x-1,\quad 1\leqslant k \leqslant M_y-1, \quad 1\leqslant n \leqslant N-1,
\end{aligned}
 \end{equation}
where $C_e$ is a positive constant independent of $\tau$ and $h$.
\end{lemma}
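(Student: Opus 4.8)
The plan is to compare the exact solution at the first time level directly with the one-step scheme \eqref{scheme3}, exploiting the fact that the two expressions agree in everything except the spatial operator and the temporal Taylor remainder. First I would expand $u(x_j,y_k,t_1)$ about $t=0$ to second order, writing $u^1_{j,k}=(u_0)_{j,k}+\tau\,\partial_t u(x_j,y_k,0)+\tfrac{\tau^2}{2}\,\partial_{tt}u(x_j,y_k,\bar\tau)$ for some $\bar\tau\in(0,\tau)$, and then substitute $\partial_t u(\cdot,0)=\mathbf{i}(L^{\alpha}_x+L^{\alpha}_y)u_0+\mathbf{i}|u_0|^2u_0$ from \eqref{equation2}. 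Subtracting \eqref{scheme3} and using $U^0_{j,k}=(u_0)_{j,k}$, the constant term and---crucially---the cubic term $\mathbf{i}|(u_0)_{j,k}|^2(u_0)_{j,k}$ cancel identically, since both the scheme and the expansion use the \emph{same} exact nodal values of $u_0$, so there is no nonlinear difficulty at this level. What survives is
\begin{align}
e^1_{j,k}=\tau\mathbf{i}\big[(L^{\alpha}_x+L^{\alpha}_y)-(\mathcal{L}^{\alpha}_x+\mathcal{L}^{\alpha}_y)\big]u_0\big|_{j,k}+\frac{\tau^2}{2}\,\partial_{tt}u(x_j,y_k,\bar\tau).\notag
\end{align}
For the first bound I would invoke Lemma \ref{Lemma1}: since $u_0$ is sufficiently smooth, the bracketed spatial discretization error is $O(h^2_x+h^2_y)$ pointwise, while the temporal remainder is $O(\tau^2)$ by the assumed regularity of $u$. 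Multiplying the spatial part by the prefactor $\tau$ yields precisely $|e^1_{j,k}|\leqslant C_e(\tau^2+\tau h^2_x+\tau h^2_y)$, with $C_e$ depending only on bounds for the relevant derivatives of the exact solution and hence independent of $\tau,h_x,h_y$.

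For the second bound I would apply $\mathcal{L}^{\alpha}_x+\mathcal{L}^{\alpha}_y$ to the displayed identity and estimate the two resulting terms separately. The temporal-remainder term is harmless: by Lemma \ref{Lemma1}, $(\mathcal{L}^{\alpha}_x+\mathcal{L}^{\alpha}_y)$ acting on the smooth profile $\partial_{tt}u(\cdot,\bar\tau)$ is a bounded approximation of $(L^{\alpha}_x+L^{\alpha}_y)\partial_{tt}u$, so it contributes $O(\tau^2)$. The delicate term is $\tau(\mathcal{L}^{\alpha}_x+\mathcal{L}^{\alpha}_y)\big[(L^{\alpha}_x+L^{\alpha}_y-\mathcal{L}^{\alpha}_x-\mathcal{L}^{\alpha}_y)u_0\big]$, and this is where I expect the main obstacle. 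Naively, applying an order-$\alpha$ difference operator to a quantity of size $O(h^2)$ could cost a factor $h^{-\alpha}$ and degrade the estimate to the useless $O(\tau h^{2-\alpha})$. The resolution is that the spatial truncation error is \emph{not} a generic $O(h^2)$ grid function: by the Celik--Duman expansion underlying Lemma \ref{Lemma1}, it equals $h^2$ times the nodal sampling of a fixed smooth function (a higher-order Riesz derivative of $u_0$) plus higher-order terms. Writing the delicate term as $(\mathcal{L}^{\alpha}_x+\mathcal{L}^{\alpha}_y)(L^{\alpha}_x+L^{\alpha}_y)u_0-(\mathcal{L}^{\alpha}_x+\mathcal{L}^{\alpha}_y)(\mathcal{L}^{\alpha}_x+\mathcal{L}^{\alpha}_y)u_0$ and applying Lemma \ref{Lemma1} to each smooth factor, the two leading contributions cancel and leave a genuine $O(h^2)$ remainder rather than $O(h^{2-\alpha})$; this is exactly the step that consumes the high regularity of $u_0$. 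Collecting the $O(\tau h^2)$ and $O(\tau^2)$ contributions yields the claimed bound on $|(\mathcal{L}^{\alpha}_x+\mathcal{L}^{\alpha}_y)e^1_{j,k}|$. Since the bookkeeping is structurally identical to Lemma 9 of \cite{He2018}, I would cite that reference for the routine details once the above cancellation has been set up.
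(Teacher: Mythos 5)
Your proof is correct and follows essentially the same route as the paper, which gives no argument of its own but defers to Lemma 9 of \cite{He2018}; that argument is exactly this Taylor-expansion-in-time plus spatial-truncation-error computation, with the nonlinear term cancelling identically because both the scheme and the expansion use the exact nodal values of $u_0$. In particular, you correctly isolate and resolve the only delicate point --- that applying the order-$\alpha$ discrete operator to the $O(h_x^2+h_y^2)$ spatial truncation error must not cost a factor $h^{-\alpha}$ --- via the Celik--Duman expansion (truncation error $=h^2\times$ a fixed smooth decaying function plus a higher-order remainder, on which the crude bound $\|\mathcal{L}^{\alpha}_x+\mathcal{L}^{\alpha}_y\|_{\infty}=O(h^{-\alpha})$ still yields $O(h^{4-\alpha})\leqslant O(h^{2})$ for $1<\alpha\leqslant 2$).
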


 \begin{theorem}\label{theorem3}
Let us denote $e^n = u^n-U^n$. Suppose that the solution of problem (\ref{IVP1})-(\ref{IVP2}) is smooth enough
 and {vanishes} as $(x,y)\rightarrow\infty$. Then there exist two small positive constants $\tau_0$ and $h_0$,
 such that, when $\tau<\tau_0$ and $h_x<h_0,\ h_y<h_0$, the numerical solution $U^n$ of the difference
 scheme  \eqref{Boundary}, (\ref{scheme1})--(\ref{scheme2}) and \eqref{scheme3} converges to the exact
 solution $u^n$ in the sense of $L^{\infty}$-norm, with an optimal convergence order $O(\tau^2+h^2_x+h^2_y)$, i.e.,
\begin{align}
\label{estimate1}
\Vert e^n \Vert_{\infty}\leqslant  C_0(\tau^2+h^2_x+h^2_y),  \quad  1\leqslant n \leqslant N,
\end{align}
where $C_0$ is a positive constant independent of $\tau$, $h_x$ and $h_y$.
 \end{theorem}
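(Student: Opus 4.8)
The plan is to prove \eqref{estimate1} by induction on the time level $n$, using the discrete Sobolev embedding of Lemma \ref{Lemma2} to pass from a discrete $H^\alpha$ bound to the desired $L^\infty$ bound. The base levels $n=0$ and $n=1$ are handled by the exact initialization \eqref{scheme2} and by Lemma \ref{Lemma8}, respectively. Assuming $\|e^m\|_\infty\leqslant C_0(\tau^2+h_x^2+h_y^2)$ for all $m\leqslant n$, I would choose $\tau_0,h_0$ small enough that the right-hand side is at most $1$; then $\|U^m\|_\infty\leqslant \|u^m\|_\infty+1$ is uniformly bounded, which is exactly the control needed to treat the nonlinear term. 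The goal of one induction step is to establish the same bound at level $n+1$, crucially with a constant $C_0$ that does \emph{not} grow with $n$.

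The core of the argument is to control the full discrete norm $\|e^{n+1}\|_{H^\alpha}^2=\|e^{n+1}\|_2^2+|e^{n+1}|_{H^{\alpha/2}}^2+|e^{n+1}|_{H^\alpha}^2$ appearing in Lemma \ref{Lemma2}. I would obtain the first two pieces by an energy estimate on the error equation \eqref{truncation}. Taking the discrete inner product of \eqref{truncation} with $e^{\bar n}$ and extracting the imaginary part yields, via Lemma \ref{Lemma5}, a telescoping identity for $\|e^{n+1}\|_2^2-\|e^{n-1}\|_2^2$; taking instead the inner product with $-\delta_t e^n$ (after multiplying \eqref{truncation} by $\mathbf{i}$) and extracting the real part yields, again by Lemma \ref{Lemma5}, a telescoping identity for $\|\Lambda^\alpha e^{n+1}\|_2^2-\|\Lambda^\alpha e^{n-1}\|_2^2$, where by Lemmas \ref{Lemma3}--\ref{Lemma4} the quantity $\|\Lambda^\alpha e^m\|_2$ is equivalent to the seminorm $|e^m|_{H^{\alpha/2}}$. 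Summing these identities over the time levels and invoking the truncation-error bounds of Lemma \ref{Lemma7} together with the discrete Gronwall inequality of Lemma \ref{Lemma6} would then bound $\|e^{m}\|_2$ and $|e^m|_{H^{\alpha/2}}$ by $C(\tau^2+h_x^2+h_y^2)$.

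It remains to control the top-order seminorm $|e^{n+1}|_{H^\alpha}$, that is, $\|(\mathcal{L}_x^\alpha+\mathcal{L}_y^\alpha)e^{n+1}\|_2$ through Lemma \ref{Lemma3}; this is what makes the $L^\infty$ estimate in two dimensions genuinely harder than the $L^2$ estimates in the literature. For this I would perform a second, higher-order energy estimate on the time-differenced error equation obtained by applying $\delta_t$ to \eqref{truncation}, controlling $\|\delta_t e^m\|_2$ from the bound on $\delta_t r^n$ in Lemma \ref{Lemma7} and the Gronwall inequality; the seminorm $|e^{n+1}|_{H^\alpha}$ is then recovered algebraically by reading $(\mathcal{L}_x^\alpha+\mathcal{L}_y^\alpha)e^{\bar n}=\mathbf{i}r^n-\mathbf{i}\delta_t e^n-P^n$ off the error equation and bounding each term. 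Combining the three pieces gives $\|e^{n+1}\|_{H^\alpha}\leqslant C(\tau^2+h_x^2+h_y^2)$, and Lemma \ref{Lemma2} closes the induction once $C_0$ is fixed as $C_\alpha C$ and $\tau_0,h_0$ are taken small.

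I expect the main obstacle to be the nonlinear term $P^n=|u^n|^2u^{\bar n}-|U^n|^2U^{\bar n}$. Because the energy identities pair $P^n$ with a time difference $\delta_t e^n$, a direct estimate carries an unwanted factor $1/\tau$; I would remove it by summation by parts in time, which transfers the difference onto $P^n$ and produces $\delta_t P^n$---this is precisely why Lemma \ref{Lemma7} supplies a bound on $\delta_t r^n$ and Lemma \ref{Lemma8} a bound involving the first step. The decomposition $P^n=|u^n|^2 e^{\bar n}+\big(|u^n|^2-|U^n|^2\big)U^{\bar n}$, together with the product rule of Lemma \ref{Lemmadelta} for $\delta_t P^n$, then expresses the nonlinear contribution as a linear-in-error term with smooth bounded coefficients plus strictly higher-order error terms, all controlled by the inductive $L^\infty$ bound. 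Making this compatible with the top-order $H^\alpha$ estimate---so that the fractional operators never land on products of errors in an uncontrolled way---is the delicate step, and is, I believe, the novel technique for the nonlinear term highlighted in the introduction.
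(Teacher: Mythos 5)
Your global architecture coincides with the paper's: induction on the stronger estimate $\|e^n\|_2+\|\Lambda^{\alpha} e^n\|_2+\|(\mathcal{L}^{\alpha}_x+\mathcal{L}^{\alpha}_y)e^n\|_2\leqslant C^0(\tau^2+h_x^2+h_y^2)$, closed via Lemma \ref{Lemma2} and Lemma \ref{Lemma3}, with Lemma \ref{Lemma8} at $n=1$ and the induction hypothesis giving $\|U^m\|_{\infty}\leqslant C_m+1$; your $L^2$ step matches the paper's Step 1, and your treatment of $\|\Lambda^{\alpha}e\|_2$ (pairing with $\delta_t e^n$, then summation by parts in time, Lemma \ref{Lemmadelta}, and $\delta_t P$ bounds) is a workable, if costlier, variant of what the paper does. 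The genuine gap is your route to the top seminorm $|e^{n+1}|_{H^{\alpha}}\sim\|(\mathcal{L}^{\alpha}_x+\mathcal{L}^{\alpha}_y)e^{n+1}\|_2$. Reading the error equation backwards only yields $\|(\mathcal{L}^{\alpha}_x+\mathcal{L}^{\alpha}_y)e^{\bar n}\|_2\leqslant\|r^n\|_2+\|\delta_t e^n\|_2+\|P^n\|_2$, i.e.\ a bound on the symmetric average $\tfrac12(\mathcal{L}^{\alpha}_x+\mathcal{L}^{\alpha}_y)(e^{n+1}+e^{n-1})$, not on level $n+1$ itself. To de-average you must iterate $e^{n+1}=2e^{\bar n}-e^{n-1}$ down to $e^0$ or $e^1$; the resulting alternating sum has $O(n)$ terms, each only bounded by $C(\tau^2+h_x^2+h_y^2)$, so the best you get is $\|(\mathcal{L}^{\alpha}_x+\mathcal{L}^{\alpha}_y)e^{n+1}\|_2=O\bigl(n(\tau^2+h^2)\bigr)=O(\tau+h^2/\tau)$, which destroys both the optimal order and unconditionality. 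No cancellation can be claimed: sequences with $a_0=0$, $|a_1|\leqslant\epsilon$, $|a_{n+1}+a_{n-1}|\leqslant 2\epsilon$ and $|a_{2k}|=2k\epsilon$ are trivial to construct, so rescuing the argument would require showing the averages vary slowly in $n$, i.e.\ bounding $(\mathcal{L}^{\alpha}_x+\mathcal{L}^{\alpha}_y)\delta_t e$, hence $\delta_t\delta_t e$, hence an energy estimate for the twice-differenced equation --- a regress you never address. Your auxiliary Gronwall estimate for $\|\delta_t e\|_2$ is also circular at start-up: $\|\delta_t e^1\|_2$ needs either $\|e^2\|_2/\tau$, which Step 1 only bounds by $O\bigl((\tau^2+h^2)/\tau\bigr)$, or, via the equation, $\|(\mathcal{L}^{\alpha}_x+\mathcal{L}^{\alpha}_y)e^2\|_2$ --- the very quantity being sought.

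The missing device, which is the paper's Step 2, is to take the inner product of \eqref{truncation} with $(\mathcal{L}^{\alpha}_x+\mathcal{L}^{\alpha}_y)\delta_t e^n$ and extract the \emph{imaginary} part: since $\bigl((\mathcal{L}^{\alpha}_x+\mathcal{L}^{\alpha}_y)\delta_t e^n,\delta_t e^n\bigr)$ is real and the cross terms in $\mathrm{Re}\bigl((\mathcal{L}^{\alpha}_x+\mathcal{L}^{\alpha}_y)e^{\bar n},(\mathcal{L}^{\alpha}_x+\mathcal{L}^{\alpha}_y)\delta_t e^n\bigr)$ are purely imaginary, this gives the exact telescoping identity $\frac{1}{4\tau}\bigl(\|(\mathcal{L}^{\alpha}_x+\mathcal{L}^{\alpha}_y)e^{n+1}\|_2^2-\|(\mathcal{L}^{\alpha}_x+\mathcal{L}^{\alpha}_y)e^{n-1}\|_2^2\bigr)=-\mathrm{Im}\bigl(r^n,(\mathcal{L}^{\alpha}_x+\mathcal{L}^{\alpha}_y)\delta_t e^n\bigr)-\mathrm{Re}\bigl(P^n,(\mathcal{L}^{\alpha}_x+\mathcal{L}^{\alpha}_y)\delta_t e^n\bigr)$ --- a difference of squared norms at \emph{individual} time levels, which sums cleanly in $n$. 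The $1/\tau$ hidden in $\delta_t e^n$ on the right is then removed precisely by the machinery you sketch in your last paragraph (summation by parts in time, the $\delta_t r$ bound of Lemma \ref{Lemma7}, and the seven-term expansion of $\delta_t P^k$ via Lemma \ref{Lemmadelta}); moreover, $\|\delta_t e^k\|_2$ for $2\leqslant k\leqslant n-1$ is bounded directly from the error equation together with the induction hypothesis, so no separate energy estimate for the differentiated equation is needed at all. In short, your instincts about the nonlinear term are the right ones, but they must be deployed inside a telescoping identity for the squared $H^{\alpha}$-seminorm; coupled instead to an algebraic de-averaging step, the proof fails to deliver the claimed $O(\tau^2+h_x^2+h_y^2)$ rate.
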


\begin{proof}
 We use mathematical induction to prove the result. Instead of directly showing (\ref{estimate1}), we will prove a stronger conclusion as follow:
\begin{align}\label{inductionproposition}
\Vert e^n \Vert_2 + \Vert \Lambda^{\alpha} e^n \Vert_2 + \Vert (\mathcal{L}^{\alpha}_x +\mathcal{L}^{\alpha}_y)e^n \Vert_2 \leqslant  C^0(\tau^2 + h_x^2 + h_y^2), \quad  1\leqslant n \leqslant N,
\end{align}
which, once verified, can easily derive the estimate (\ref{estimate1}). {Because it follows directly from Lemma} \ref{Lemma2}, Lemma \ref{Lemma3} and some basic inequalities that
\begin{align}\label{inductionproposition2}
\Vert e^n \Vert_{\infty} \leqslant C_{\alpha}\Vert e^n \Vert_{H^{\alpha}}&\leqslant C_{\alpha}\left(\Vert e^n \Vert_2 + \vert e^n \vert_{H^{\alpha}} + \vert e^n \vert_{H^{\frac{\alpha}{2}}}\right)\\
&\leqslant C_{\alpha}\left( \dfrac{\pi}{2} \right)^{2\alpha}\left(\Vert e^n \Vert_2 + \Vert \Lambda^{\alpha} e^n \Vert_2 + \Vert (\mathcal{L}^{\alpha}_x +\mathcal{L}^{\alpha}_y)e^n \Vert_2\right)\\
&\leqslant C_{\alpha}\left( \dfrac{\pi}{2} \right)^{2\alpha}C^0(\tau^2 + h_x^2 + h_y^2)\\
&:=C_0(\tau^2 + h_x^2 + h_y^2)
\end{align}

Firstly,  it can be proved that (\ref{inductionproposition}) holds for $n=1$. As a matter of fact, from Lemma~\ref{Lemma8}, it holds
\begin{align*}
\|e^{1}\|_2^2=\left(e^1,e^1\right)\leqslant&\; h_xh_y\sum^{M_x-1}_{j=1}\sum^{M_y-1}_{k=1}|e^1_{j,k}|\cdot |e^1_{j,k}|\nonumber\\
\leqslant&\; (b-a)(d-c)\max_{\substack{1\leqslant j\leqslant M_x-1,1\leqslant k\leqslant M_y-1}}|e^1_{j,k}|\cdot |e^1_{j,k}|\\
\leqslant&\; (b-a)(d-c)C_e^2(\tau^2 + h_x^2 + h_y^2)^2,
\end{align*}
\begin{align*}
\|\Lambda^{\alpha} e^{1}\|_2^2=\left((\mathcal{L}^{\alpha}_x+\mathcal{L}^{\alpha}_y)e^1,e^1\right)\leqslant& h_xh_y\sum^{M_x-1}_{j=1}\sum^{M_y-1}_{k=1}|(\mathcal{L}^{\alpha}_x+\mathcal{L}^{\alpha}_y)e^1_{j,k}|\cdot|e^1_{j,k}|\nonumber\\
\leqslant&(b-a)(d-c)\max_{\substack{1\leqslant j\leqslant M_x-1,1\leqslant k\leqslant M_y-1}}|(\mathcal{L}^{\alpha}_x+\mathcal{L}^{\alpha}_y)e^1_{j,k}|\cdot |e^1_{j,k}|\nonumber\\
\leqslant&(b-a)(d-c)C_e^2(\tau^2 + h_x^2 + h_y^2)^2,
\end{align*}
and similarly,
\begin{align*}
\|(\mathcal{L}^{\alpha}_x+\mathcal{L}^{\alpha}_y)e^{1}\|_2^2=&\left((\mathcal{L}^{\alpha}_x+\mathcal{L}^{\alpha}_y)e^1,(\mathcal{L}^{\alpha}_x+\mathcal{L}^{\alpha}_y)e^1\right)\nonumber\\
\leqslant& h_xh_y\sum^{M_x-1}_{j=1}\sum^{M_y-1}_{k=1}|(\mathcal{L}^{\alpha}_x+\mathcal{L}^{\alpha}_y)e^1_{j,k}|^2\nonumber\\
\leqslant&(b-a)(d-c)\max_{1\leqslant j\leqslant M_x-1,1\leqslant k\leqslant M_y-1}|(\mathcal{L}^{\alpha}_x+\mathcal{L}^{\alpha}_y)e^1_{j,k}|^2\nonumber\\
\leqslant&(b-a)(d-c)C_e^2(\tau^2 + h_x^2 + h_y^2)^2.
\end{align*}
Taking $C^0 = 3\sqrt{(b-a)(d-c)}C_e,$  then the {estimate} for $n=1$ {is valid}.

{Now the induction is started}. Assume that  (\ref{inductionproposition}) is true for $m\leqslant n$. We want to show that (\ref{inductionproposition}) is also valid for $n+1$. By the assumption, we have
\begin{align*}
\Vert e^m \Vert_{\infty} \leqslant C_0(\tau^2 + h_x^2 + h_y^2),\quad  1\leqslant m \leqslant n,
\end{align*}
and
  \begin{align}
\|U^{m}\|_{\infty}\leqslant &\; \| u^{m}\|_{\infty}+\| e^{m}\|_{\infty}\nonumber\\
\leqslant &\; C_m + C_0(\tau^2+h^2_x+h^2_y)\nonumber\\
\leqslant &\; C_m + C_0(\tau^2+2h^2_1) \nonumber\\
\leqslant &\;  C_m+1, \quad  1\leqslant m \leqslant n,\label{estimate47}
\end{align}
for $\tau<\tau_1$, $h_x<h_1 $ and $h_y <h_1$, where $\tau_1, h_1$  satisfy that  $\tau_1^2+2h_1^2<\frac{1}{C_0}$. Here,
 $$C_m=\max_{\substack{a\leqslant x \leqslant b,c\leqslant y \leqslant d,0\leqslant t \leqslant T} }|u(x,y,t)|.$$

\noindent \textbf{\textit{Step1.}} Computing the discrete inner product of (\ref{truncation})
with $e^{\bar{n}}$ and taking the real part of the resulting equation, we have
  \begin{align}\label{estimate48}
\frac{\|e^{n+1}\|_2^2-\|e^{n-1}\|_2^2}{4\tau}&=Re\left[(r^n, e^{\bar{n}})\right]-Im\left[(P^n, e^{\bar{n}})\right]\nonumber\\
&\leqslant \|r^n\|_2\|e^{\bar{n}}\|_2+\|P^n\|_2\|e^{\bar{n}}\|_2\nonumber\\
&\leqslant \frac{1}{2}\|r^n\|_2^2+\frac{1}{2}\|P^n\|_2^2+\|e^{\bar{n}}\|_2^2\nonumber\\
&\leqslant \frac{1}{2}\|r^n\|_2^2+\frac{1}{2}\|P^n\|_2^2+\frac{1}{2}\|e^{n+1}\|_2^2+\frac{1}{2}\|e^{n-1}\|_2^2,
\end{align}
where
\begin{align}\label{estimate488}
P^n_{j,k}&=|u^{n}_{j,k}|^2u^{\bar{n}}_{j,k}-|U^{n}_{j,k}|^2U^{\bar{n}}_{j,k}\nonumber\\
     &=(|u^{n}_{j,k}|^2-|U^{n}_{j,k}|^2)u^{\bar{n}}_{j,k}+|U^{n}_{j,k}|^2e^{\bar{n}}_{j,k}\nonumber\\
     &=(|u^{n}_{j,k}|-|U^{n}_{j,k}|)(|u^{n}_{j,k}|+|U^{n}_{j,k}|)u^{\bar{n}}_{j,k}+|U^{n}_{j}|^2e^{\bar{n}}_{j,k}.
\end{align}
{It follows from the assumption} (\ref{estimate47}) that
\begin{align}
\|P^n\|_2\leqslant&\;  (\|u^{n}\|_{\infty}+\|U^{n}\|_{\infty})\|u^{\bar{n}}\|_{\infty}\|e^{n}\|_2+\|U^{n}\|^2_{\infty}\|e^{\bar{n}}\|_2 \nonumber\\
\leqslant&\;  (C_m+C_m+1)C_m\|e^{n}\|_2+(C_m+1)^2\| e^{\bar{n}}\|_2 \nonumber\\
\leqslant&\;  C_{1} (\|e^{n}\|_2+\| e^{\bar{n}}\|_2) \nonumber\\
\leqslant&\;  C_{1} (\|e^{n-1}\|_2+\| e^{n}\|_2+\|e^{n+1}\|_2),\label{estimate49}
\end{align}
for $\tau<\tau_1$ and $h<h_1$, where
\begin{equation*}
  C_{1} = 2(C_m+1)^2.
\end{equation*}
Therefore,
\begin{align}\label{estimate50}
\|P^n\|_2^2
\leqslant& 3C^2_{1} (\|e^{n-1}\|_2^2+\| e^{n}\|_2^2+\|e^{n+1}\|_2^2),
\end{align}
for $\tau<\tau_1$ and $h<h_1$. Following (\ref{estimate48}) and (\ref{estimate50}),
\begin{align}\label{estimate500}
\|e^{n+1}\|_2^2-\|e^{n-1}\|_2^2\leqslant 2\tau\|r^n\|_2^2+(12C^2_{1}+2)\tau(\|e^{n-1}\|_2^2+2\| e^{n}\|_2^2+\|e^{n+1}\|_2^2).
\end{align}

Now {define} $E_{n+1} := \Vert e^{n+1} \Vert_2^2+\Vert e^n \Vert_2^2$, then (\ref{estimate500}) can be simplified into
\begin{align}
E_{n+1}-E_n \leqslant (12C^2_{1}+2)\tau\left( E_{n+1} + E_n \right) + 2\tau\Vert r^n \Vert_2^2.
\end{align}
This is equivalent to
\begin{align}
\left( 1-(12C^2_{1}+2)\tau \right)\left( E_{n+1}-E_n \right)\leqslant (24C^2_{1}+4)\tau E_n +2\tau\Vert r^n \Vert_2^2.
\end{align}
Let $ \tau_1 = \frac{1
}{24C_1^2+4} $, then as $\tau<\tau_1$, we have $1-(12C_1^2+2)\geq \frac{1}{2}$, and
\begin{align}
E_{n+1} - E_n \leqslant (48C_1^2+8)\tau E_n + 4\tau\Vert r^n \Vert_2^2.
\end{align}
Replacing $n$ by $k$ above and summing $k$ over 1 by $n$, then we obtain that
\begin{align}
E_{n+1} \leqslant (48C_1^2+8)\tau \sum_{k=1}^{n}E_k + 4\tau \sum_{k=1}^{n}\Vert r^k \Vert_2^2 + E_1.
\end{align}
Since Lemma~\ref{Lemma8} shows that $ \vert e_{jk}^1 \vert \leqslant C_e \left( \tau^2 + \tau h_x^2 + \tau h_y^2 \right) $ for some positive constant $ C_e $ and $ 1\leqslant j \leqslant M_x-1 $, $ 1\leqslant k \leqslant M_y-1 $, then it apparently holds that
\begin{align}
E_1 = \Vert e^1 \Vert_2^2 = h_x h_y \sum_{j=1}^{M_x-1} \sum_{k=1}^{M_y-1}\vert e_{jk}^1 \vert^2
\leqslant &\; h_x h_y \sum_{j=1}^{M_x-1} \sum_{k=1}^{M_y-1}C_e^2 \left( \tau^2 + h_x^2 + h_y^2 \right)^2 \nonumber\\
\leqslant & \left( b-a \right)\left( d-c \right)C_e^2 \left( \tau^2 + h_x^2 + h_y^2 \right)^2,
\end{align}
where $\tau<1$ is used again. In addition, Lemma~\ref{Lemma7} provides that
\begin{align}
4\tau \sum_{k=1}^{n} \Vert r^k \Vert_2^2 \leqslant 4TC_R^2\left( \tau^2 + h_x^2 + h_y ^2 \right)^2.
\end{align}
This prior estimate can be applied to set a bound:
\begin{align}
E_{n+1} &\leqslant (48C_1^2+8)\tau \sum_{k=1}^n\limits E_k + \left[ 4TC_R^2 + \left( b-a \right)\left( d-c \right)C_e^2 \right]\left( \tau^2 + h_x^2 + h_y^2 \right)^2\nonumber \\
&:= (48C_1^2+8)\tau \sum_{k=1}^n\limits E_k + C_2\left( \tau^2 + h_x^2 + h_y^2 \right)^2,
\end{align}
Now the discrete Gronwall's inequality shows us that
\begin{align}\label{result1}
\Vert e^{n+1} \Vert_2^2 + \Vert e^n \Vert_2^2 = E_{n+1}\leqslant &\; C_2\exp \left\lbrace \sum_{k=1}^{n}(48C_1^2+8)\tau \right\rbrace\left( \tau^2 + h_x^2 + h_y^2 \right)^2\nonumber \\
\leqslant&\; C_2 e^{(48C_1^2+8)T}\left( \tau^2 + h_x^2 + h_y^2 \right)^2\nonumber \\
=&:\;C_3\left( \tau^2 + h_x^2 + h_y^2 \right)^2,
\end{align}

\noindent \textbf{\textit{Step2.}} To carry out further estimations, we compute the discrete inner product of (\ref{truncation}) with $(\mathcal{L}^{\alpha}_x+\mathcal{L}^{\alpha}_y)e^{\bar{n}}$ and consider the real part of the resulting equation. Now we have a new relation
  \begin{align}\label{estimate51}
&\frac{\|\Lambda^{\alpha}e^{n+1}\|_2^2-\|\Lambda^{\alpha}e^{n-1}\|_2^2}{4\tau}\nonumber\\
=&\;Re\left[(r^n,(\mathcal{L}^{\alpha}_x+\mathcal{L}^{\alpha}_y)e^{\bar{n}})\right]-Im\left[(P^n,(\mathcal{L}^{\alpha}_x+\mathcal{L}^{\alpha}_y)e^{\bar{n}})\right]\nonumber\\
\leqslant &\; \|r^n\|_2\|(\mathcal{L}^{\alpha}_x+\mathcal{L}^{\alpha}_y)e^{n+1}\|_2 + \|r^n\|_2\|(\mathcal{L}^{\alpha}_x+\mathcal{L}^{\alpha}_y)e^{n-1}\|_2 + \|P^n\|_2\|(\mathcal{L}^{\alpha}_x+\mathcal{L}^{\alpha}_y)e^{n+1}\|_2\nonumber\\
&\; + \|P^n\|_2\|(\mathcal{L}^{\alpha}_x+\mathcal{L}^{\alpha}_y)e^{n-1}\|_2\nonumber\\
\leqslant&\; \|r^n\|_2^2+\|P^n\|_2^2+\|(\mathcal{L}^{\alpha}_x+\mathcal{L}^{\alpha}_y)e^{n+1}\|_2^2+\|(\mathcal{L}^{\alpha}_x+\mathcal{L}^{\alpha}_y)e^{n-1}\|_2^2.
\end{align}
We should notice what we have concluded at the end of \textbf{\emph{Step1}},
 \begin{align}\label{estimate5151}
 \Vert P^n \Vert^2_2 \leqslant 3C_1^2 \left( \Vert e^{n+1} \Vert^2_2 + 2\Vert e^{n} \Vert^2_2 + \Vert e^{n-1} \Vert^2_2\right) = 3C_1^2\left( E_{n+1} + E_n \right) \leqslant 6C_1^2C_3\left( \tau^2 + h_x^2 + h_y^2 \right)^2.
 \end{align}

Again, we compute the discrete inner product of (\ref{truncation}) with $(\mathcal{L}^{\alpha}_x+\mathcal{L}^{\alpha}_y)\delta_te^{n}$ and analyse the imaginary part of the resulting equation, namely,
\begin{align}
&\;\frac{\|(\mathcal{L}^{\alpha}_{x}+\mathcal{L}^{\alpha}_{y})e^{n+1}\|_2^2-\| (\mathcal{L}^{\alpha}_{x}+\mathcal{L}^{\alpha}_{y})e^{n-1}\|_2^2}{4\tau} \nonumber\\
=&\;-Im\left[(r^n, (\mathcal{L}^{\alpha}_{x}+\mathcal{L}^{\alpha}_{y})\delta_te^{n})\right]-Re\left[(P^n, (\mathcal{L}^{\alpha}_{x}+\mathcal{L}^{\alpha}_{y})\delta_te^{n})\right].\label{estimate55}
\end{align}

Now combining (\ref{estimate51}) and (\ref{estimate55}), we have
\begin{align}\label{estimate 5155}
&\| (\mathcal{L}^{\alpha}_x+\mathcal{L}^{\alpha}_y)e^{n+1}\|_2^2+\|\Lambda^{\alpha} e^{n+1}\|_2^2-\| (\mathcal{L}^{\alpha}_x+\mathcal{L}^{\alpha}_y)e^{n-1}\|_2^2-\|\Lambda^{\alpha} e^{n-1}\|_2^2\nonumber\\
\leqslant&\; 4\tau\left(\|r^n\|_2^2+\|P^n\|_2^2+\|(\mathcal{L}^{\alpha}_x+\mathcal{L}^{\alpha}_y)e^{n+1}\|_2^2+\|(\mathcal{L}^{\alpha}_x+\mathcal{L}^{\alpha}_y)e^{n-1}\|_2^2\right) \nonumber\\
&\;-4\tau Im\left[(r^n, (\mathcal{L}^{\alpha}_{x}+\mathcal{L}^{\alpha}_{y})\delta_te^{n})\right]-4\tau Re\left[(P^n, (\mathcal{L}^{\alpha}_{x}+\mathcal{L}^{\alpha}_{y})\delta_te^{n})\right].
\end{align}
Using the same method as what appears in \textbf{\emph{Step1}}, we  denote that $ F_{n+1} = \Vert(\mathcal{L}^{\alpha}_x+\mathcal{L}^{\alpha}_y)e^{n+1}\Vert_2^2 + \Vert(\mathcal{L}^{\alpha}_x+\mathcal{L}^{\alpha}_y)e^{n}\Vert_2^2 + \Vert \Lambda^{\alpha} e^{n+1} \Vert_2^2 + \Vert \Lambda^{\alpha} e^{n} \Vert_2^2 $, by which the inequality (\ref{estimate 5155}) is reduced to
\begin{align}
F_{n+1}-F_n \leqslant &\; 4\tau\left( F_{n+1} + F_n \right) -4\tau Im\left[(r^n, (\mathcal{L}^{\alpha}_{x}+\mathcal{L}^{\alpha}_{y})\delta_te^{n})\right]\nonumber\\
                      &\;-4\tau Re\left[(P^n, (\mathcal{L}^{\alpha}_{x}+\mathcal{L}^{\alpha}_{y})\delta_te^{n})\right] + 4\tau \left( \Vert r^n \Vert_2^2 + \Vert P^n \Vert_2^2\right).
\end{align}
This can be further estimated as $\tau < \tau_2 =\frac{1}{8}$, that
\begin{align}\label{estimate67}
 F_{n+1}-F_n\leqslant &\; 16\tau F_n  -8\tau Im\left[(r^n, (\mathcal{L}^{\alpha}_{x}+\mathcal{L}^{\alpha}_{y})\delta_te^{n})\right]-8\tau Re\left[(P^n, (\mathcal{L}^{\alpha}_{x}+\mathcal{L}^{\alpha}_{y})\delta_te^{n})\right] \nonumber\\
 &\; + 8\tau \left( \Vert r^n \Vert_2^2 + \Vert P^n \Vert_2^2\right).
\end{align}
We replace the index $n$ by $k$ in (\ref{estimate67}) and sum over $k$ from $1$ to $n$. It yields
\begin{align}\label{estimate68}
F_{n+1}-F_1\leqslant &\; 16\tau \sum_{k=1}^{n}F_k -8\tau \sum_{k=1}^{n} Im\left[(r^k, (\mathcal{L}^{\alpha}_{x}+\mathcal{L}^{\alpha}_{y})\delta_te^{k})\right]\notag\\
                     &\;     -8\tau \sum_{k=1}^{n} Re\left[(P^k, (\mathcal{L}^{\alpha}_{x}+\mathcal{L}^{\alpha}_{y})\delta_te^{k})\right]+ 8\tau \sum_{k=1}^{n} \left( \Vert r^k \Vert_2^2 + \Vert P^k \Vert_2^2\right).
\end{align}

\noindent \textbf{\textit{Step3.}} In order to use the discrete Gronwall's inequality, we need to make some further estimations. Firstly, following from Lemma~\ref{Lemma7} and our previous consequences, we obtain
\begin{equation}\label{estimate69}
8\tau\sum^{n}_{k=1}\left( \Vert r^k \Vert_2^2 + \Vert P^k \Vert_2^2 \right)\leqslant 8T\left(C_R^2 + 6C^2_1C_3\right)(\tau^2+h_x^2+h_y^2)^2 = C_4(\tau^2+h_x^2 +h_y^2)^2.
\end{equation}
Using some basic estimates, we also have that
\begin{align*}
\qquad\qquad&8\tau \sum_{k=1}^{n} Im\left[(r^k,(\mathcal{L}^{\alpha}_{x}+\mathcal{L}^{\alpha}_{y})\delta_te^{k})\right] \notag\\
\qquad\qquad= &16\tau \sum_{k=1}^{n} \dfrac{1}{2\tau}Im \left[ (r^k, (\mathcal{L}^{\alpha}_{x}+\mathcal{L}^{\alpha}_{y})(e^{k+1}-e^{k-1})) \right]\nonumber \\
\qquad\qquad= &8\tau \sum_{k=1}^{n} \dfrac{1}{2\tau}Im \left[ (r^k, (\mathcal{L}^{\alpha}_{x}+\mathcal{L}^{\alpha}_{y})e^{k+1}) \right]-8\tau \sum_{k=1}^{n} \dfrac{1}{2\tau}Im \left[ (r^k, (\mathcal{L}^{\alpha}_{x}+\mathcal{L}^{\alpha}_{y})e^{k-1}) \right]\nonumber\\
\qquad\qquad= &8\tau \sum_{k=2}^{n+1} \dfrac{1}{2\tau}Im \left[( r^{k-1}, (\mathcal{L}^{\alpha}_{x}+\mathcal{L}^{\alpha}_{y})e^{k}) \right]-8\tau \sum_{k=0}^{n-1} \dfrac{1}{2\tau}Im \left[ (r^{k+1}, (\mathcal{L}^{\alpha}_{x}+\mathcal{L}^{\alpha}_{y})e^{k}) \right]\nonumber\\
\qquad\qquad= &4Im \left[ (r^n,(\mathcal{L}^{\alpha}_{x}+\mathcal{L}^{\alpha}_{y})e^{n+1})+(r^{n-1},(\mathcal{L}^{\alpha}_{x}+\mathcal{L}^{\alpha}_{y})e^n)
-(r^1,(\mathcal{L}^{\alpha}_{x}+\mathcal{L}^{\alpha}_{y})e^0)\right.\nonumber\\
&\left.-(r^2,(\mathcal{L}^{\alpha}_{x}+\mathcal{L}^{\alpha}_{y})e^1)\right]-8\tau\sum_{k=2}^{n-1} Im\left[ (\delta_t r^k,(\mathcal{L}^{\alpha}_{x}+\mathcal{L}^{\alpha}_{y})e^k) \right].
\end{align*}
It follows from Lemma~\ref{Lemma7} and Lemma~\ref{Lemma8}, after some simple calculations,
\begin{align}
\qquad\qquad&8\tau \sum_{k=1}^{n} Im\left[(r^k,(\mathcal{L}^{\alpha}_{x}+\mathcal{L}^{\alpha}_{y})\delta_te^{k})\right] \nonumber \\
\qquad\qquad\leqslant & 4\left[  \|r^n\|_2 \cdot \|(\mathcal{L}^{\alpha}_{x}+\mathcal{L}^{\alpha}_{y})e^{n+1}\|_2 + \|r^{n-1} \|_2
\cdot \| (\mathcal{L}^{\alpha}_{x}+\mathcal{L}^{\alpha}_{y})e^{n} \|_2 \right.\nonumber\\
\qquad\qquad&\left.+\| r^2\|_2\cdot\|(\mathcal{L}^{\alpha}_{x}+\mathcal{L}^{\alpha}_{y})e^1 \|_2  \right] +8\tau \sum_{k=2}^{n-1}\| \delta_t r^k \|_2 \cdot \| (\mathcal{L}^{\alpha}_{x}+\mathcal{L}^{\alpha}_{y})e^k \|_2 \nonumber\\
\qquad\qquad\leqslant &4\left[ 4\| r^n \|_2^2 + \dfrac{1}{16}\| (\mathcal{L}^{\alpha}_{x}+\mathcal{L}^{\alpha}_{y})e^{n+1} \|_2^2 + 4\| r^{n-1} \|_2^2 + \dfrac{1}{16}\| (\mathcal{L}^{\alpha}_{x}+\mathcal{L}^{\alpha}_{y})e^{n} \|_2^2 + \dfrac{1}{2}\| r^2 \|_2^2 \right.\nonumber\\
&\left. + \dfrac{1}{2}\| (\mathcal{L}^{\alpha}_{x}+\mathcal{L}^{\alpha}_{y})e^1 \|_2^2\right]+ 4\tau \sum_{k=2}^{n-1}\| \delta_t r^k \|_2^2 + 4\tau \sum_{k=2}^{n-1} \| (\mathcal{L}^{\alpha}_{x}+\mathcal{L}^{\alpha}_{y})e^k \|_2^2\nonumber \\
\qquad\qquad= &16\| r^n\|_2^2 + 16 \| r^{n-1} \|_2^2 + 2\| r^2 \|_2^2 + 2\| (\mathcal{L}^{\alpha}_{x}+\mathcal{L}^{\alpha}_{y})e^1 \|_2 + \dfrac{1}{4}\| (\mathcal{L}^{\alpha}_{x}+\mathcal{L}^{\alpha}_{y})e^{n+1} \|_2^2\nonumber \\
\qquad\qquad& + \dfrac{1}{4}\| (\mathcal{L}^{\alpha}_{x}+\mathcal{L}^{\alpha}_{y})e^{n} \|_2^2+ 4\tau \sum_{k=2}^{n-1}\| \delta_t r^k \|_2^2 + 4\tau \sum_{k=2}^{n-1} \| (\mathcal{L}^{\alpha}_{x}+\mathcal{L}^{\alpha}_{y})e^k \|_2^2\nonumber \\
\qquad\qquad\leqslant & 4\tau \sum_{k=2}^{n-1} \| (\mathcal{L}^{\alpha}_{x}+\mathcal{L}^{\alpha}_{y})e^k \|_2^2 + \dfrac{1}{4}\| (\mathcal{L}^{\alpha}_{x}+\mathcal{L}^{\alpha}_{y})e^{n+1} \|_2^2 + \dfrac{1}{4}\| (\mathcal{L}^{\alpha}_{x}+\mathcal{L}^{\alpha}_{y})e^{n} \|_2^2 \nonumber\\
&+ C_5\left(\tau^2 + h_x^2 + h_y^2 \right)^2,\label{estimation450}
\end{align}
Since $ \Vert r^n \Vert_2^2 $ and $\Vert \delta_t r^n \Vert_2^2$ are all bounded by a constant-multiple of $\left(\tau^2 + h_x^2 + h_y^2 \right)^2$,  we can use $C_5$ as a general positive constant to represent the sum of all coefficients of $\left(\tau^2 + h_x^2 + h_y^2 \right)^2$ in the above estimation.

Next we need to estimate $8\tau\sum_{k=1}^{n}\limits Re\left[(P^k, (\mathcal{L}^{\alpha}_{x}+\mathcal{L}^{\alpha}_{y})\delta_te^{k})\right]$. We notice that
\begin{align}
&8\tau \sum_{k=1}^{n} Re\left[(P^k, (\mathcal{L}^{\alpha}_{x}+\mathcal{L}^{\alpha}_{y})\delta_te^{k})\right]\nonumber \\
= &4 Re\left( P^n, \left( \mathcal{L}_x^{\alpha} + \mathcal{L}_y^{\alpha} \right)e^{n+1} \right) + Re \left( P^{n-1}, \left( \mathcal{L}_x^{\alpha} + \mathcal{L}_y^{\alpha} \right)e^n \right) - Re\left( P^1, \left( \mathcal{L}_x^{\alpha} + \mathcal{L}_y^{\alpha} \right)e^0 \right) \nonumber\\
& -Re\left( P^2, \left( \mathcal{L}_x^{\alpha} + \mathcal{L}_y^{\alpha} \right)e^1 \right) -8\tau \sum_{k=2}^{n-1}\limits Re (\delta_t P^k,e^k),
\end{align}
where
\begin{align}
P^k=&|u^{k}|^2u^{\bar{k}}-|U^{k}|^2U^{\bar{k}}\nonumber \\
=& u^k (u^k)^*u^{\bar{k}} - \left( u^k-e^k \right)\left( (u^k)^*-(e^k)^* \right) \left( u^{\bar{k}}-e^{\bar{k}} \right)\nonumber \\
=&(u^k)^{*}u^{\bar{k}}e^k+u^ku^{\bar{k}}(e^k)^*+u^k(u^k)^{*}e^{\bar{k}}-(e^k)^*e^{\bar{k}}u^k-e^ke^{\bar{k}}(u^k)^{*}\nonumber\\
&-e^k(e^k)^*u^{\bar{k}}+e^k(e^k)^*e^{\bar{k}}
\end{align}
and
\begin{align}
\delta_t P^k=&\delta_t((u^k)^*u^{\bar{k}}e^k)+\delta_t (u^ku^{\bar{k}}(e^k)^*)+\delta_t (u^k(u^k)^*e^{\bar{k}})-\delta_t ((e^k)^*e^{\bar{k}}u^k)\nonumber\\
&-\delta_t (e^ke^{\bar{k}}(u^k)^*)-\delta_t(e^k(e^k)^*u^{\bar{k}})+\delta_t(e^k(e^k)^*e^{\bar{k}})
\triangleq \sum^{7}_{j=1}I_j.
\end{align}

In order to estimate $\delta_t e^{k}$ and similar terms which shows in the expansion of $\delta_t P^k$, we first use the truncation error (\ref{truncation}) to find
\begin{align}\label{delta en}
\Vert \delta_t e^k \Vert_2 \leqslant &\; \Vert r^k \Vert_2 + \Vert \mathbf{i} P^k \Vert_2 + \Vert \mathbf{i}(\mathcal{L}^{\alpha}_x+\mathcal{L}^{\alpha}_y)e^{\bar{k}} \Vert_2 \nonumber\\
\leqslant &\;\Vert r^k \Vert_2 + \Vert P^k \Vert_2 + \Vert (\mathcal{L}^{\alpha}_x+\mathcal{L}^{\alpha}_y)e^{k+1}\Vert_2 + \Vert (\mathcal{L}^{\alpha}_x+\mathcal{L}^{\alpha}_y)e^{k-1}\Vert_2.
\end{align}

 Next, we observe that the exact solution $u$ is sufficiently smooth, so that $\Vert u \Vert_{\infty}$ and $\Vert u_t \Vert_{\infty}$ must be bounded in the domain $[0,T]\times[a,b] \times [c,d]$. In addition, we may as well take use of the result in \textbf{\emph{Step1}}, i.e., $ \Vert e^{n+1} \Vert_2 + \Vert e^{n} \Vert_2  \leqslant \sqrt{3C_3}(\tau^2 + h_x^2 + h_y^2) < C $ as $\tau,h_x,h_y <1$. Therefore, combining with Lemma~\ref{Lemmadelta}, the seven items can be estimated as follows:

\begin{flalign}
\begin{split}
\hspace{4mm}
\Vert I_1 \Vert_2 &= \Vert \delta_t (u^k)^* e^{\bar{k}} u^{\bar{\bar{k}}} + \delta_t e^k ( u^{\bar{k}} )^*u^{\bar{\bar{k}}} + \delta_t u^{\bar{k}}(u^* e)^{\bar{k}}\Vert_2\nonumber \\
&\leqslant \Vert u_t \Vert_{\infty}\cdot\Vert u \Vert_{\infty}\cdot\Vert e^{\bar{k}} \Vert_2 + \Vert u \Vert^2_{\infty}\cdot\Vert \delta_t e^k \Vert_2 + \Vert u_t \Vert_{\infty} \Vert u \Vert_{\infty}\left( \Vert e^{k+1} \Vert_2 + \Vert e^{k-1} \Vert_2 \right)\nonumber \\
&\leqslant C\left( \Vert e^{k+1} \Vert_2 + \Vert e^{k-1} \Vert_2 + \Vert \delta_t e^k \Vert_2 \right)\nonumber \\
&\leqslant C\left( \Vert e^{k+1} \Vert_2 + \Vert e^{k-1} \Vert_2 + \Vert r^{k} \Vert_2 + \Vert P^{k} \Vert_2 + \Vert (\mathcal{L}^{\alpha}_x+\mathcal{L}^{\alpha}_y)e^{k+1}\Vert_2 + \Vert (\mathcal{L}^{\alpha}_x+\mathcal{L}^{\alpha}_y)e^{k-1}\Vert_2 \right),
\end{split}&
\end{flalign}
\begin{flalign}
\begin{split}
\hspace{4mm}
\Vert I_2 \Vert_2 &= \Vert \delta_t u^k ( e^{\bar{k}} )^* u^{\bar{\bar{k}}} + \delta_t (e^k)^*u^{\bar{k}}u^{\bar{\bar{k}}} + \delta_t u^{\bar{k}} (ue^*)^{\bar{k}} \Vert_2\nonumber \\
&\leqslant \Vert u_t \Vert_{\infty}\cdot\Vert u \Vert_{\infty}\cdot\Vert e^{\bar{k}} \Vert_2 + \Vert u \Vert^2_{\infty}\cdot\Vert \delta_t e^k \Vert_2 + \Vert u_t \Vert_{\infty} \Vert u \Vert_{\infty}\left( \Vert e^{k+1} \Vert_2 + \Vert e^{k-1} \Vert_2 \right)\nonumber \\
&\leqslant C\left( \Vert e^{k+1} \Vert_2 + \Vert e^{k-1} \Vert_2 + \Vert \delta_t e^k \Vert_2 \right)\nonumber \\
&\leqslant C\left( \Vert e^{k+1} \Vert_2 + \Vert e^{k-1} \Vert_2 + \Vert r^{k} \Vert_2 + \Vert P^{k} \Vert_2 + \Vert (\mathcal{L}^{\alpha}_x+\mathcal{L}^{\alpha}_y)e^{k+1}\Vert_2 + \Vert (\mathcal{L}^{\alpha}_x+\mathcal{L}^{\alpha}_y)e^{k-1}\Vert_2 \right),
\end{split}&
\end{flalign}
\begin{flalign}
\begin{split}
\hspace{4mm}
\Vert I_3 \Vert_2  &= \Vert \delta_t u^k(u^k)^* e^{\bar{\bar{k}}} + \delta_t (u^k)^*u^{\bar{k}}e^{\bar{\bar{k}}} + (uu^*)^{\bar{k}}\delta_t e^{\bar{k}} \Vert_2 \nonumber \\
&\leqslant 2 \Vert u_t \Vert_{\infty}\cdot \Vert  u \Vert_{\infty}\cdot \Vert e^{\bar{\bar{k}}} \Vert_2  + \Vert   uu^* \Vert_\infty\cdot \Vert \delta_t e^{\bar{k}} \Vert_2 \nonumber \\
&\leqslant C\left(\Vert e^{k+2} \Vert_2 + \|e^k\|_2+ \Vert e^{k-2} \Vert_2 + \Vert \delta_t e^{k+1} \Vert_2 + \Vert \delta_t e^{k-1} \Vert_2 \right)\nonumber \\
&\leqslant C\left( \Vert e^{k+2} \Vert_2 +\|e^k\|_2+ \Vert e^{k-2} \Vert_2 + \Vert r^{k+1} \Vert_2 + \Vert r^{k-1} \Vert_2 + \Vert P^{k+1} \Vert_2 + \Vert P^{k-1} \Vert_2\right.\nonumber\\
 &\quad \left.+ \Vert (\mathcal{L}^{\alpha}_x+\mathcal{L}^{\alpha}_y)e^{k+2}\Vert_2 + \Vert (\mathcal{L}^{\alpha}_x+\mathcal{L}^{\alpha}_y)e^{k}\Vert_2 + \Vert (\mathcal{L}^{\alpha}_x+\mathcal{L}^{\alpha}_y)e^{k-2}\Vert_2 \right),
\end{split}&
\end{flalign}

Especially for $2\leqslant k \leqslant n-1$, the estimation (\ref{delta en}) only involves values from the first $n$-th time layers, thus by the induction assumption, we have $\Vert \delta_t e^k \Vert_2 \leqslant C$ when $\tau, h_x, h_y<1$. By using the estimate (\ref{inductionproposition2}) under the induction assumption  and Lemma~\ref{Lemma2}, we have
\begin{flalign}
\begin{split}
\hspace{4mm}
\Vert I_4 \Vert_2 &= \Vert\delta_t u^k (e^{\bar{k}})^*e^{\bar{\bar{k}}} + \delta_t (e^k)^*u^{\bar{k}}e^{\bar{\bar{k}}} + \delta_t e^{\bar{k}}(ue^*)^{\bar{k}}\Vert_2\nonumber \\
&\leqslant \Vert u_t \Vert_{\infty}\Vert e^{\bar{k}} \Vert_{\infty}\Vert e^{\bar{\bar{k}}} \Vert_2 + \Vert u \Vert_{\infty}\cdot \Vert \delta_t e^k \Vert_{2}\cdot\Vert e^{\bar{\bar{k}}}\Vert_{\infty} + \Vert u \Vert_{\infty}\cdot\left(\Vert e^{k+1} \Vert_{\infty} +\Vert e^{k-1} \Vert_{\infty} \right)\cdot\Vert \delta_t e^{\bar{k}} \Vert_2 \nonumber\\
&\leqslant C\left(\Vert e^{\bar{\bar{k}}} \Vert_2+\Vert e^{\bar{\bar{k}}}\Vert_{H^{\alpha}}+\Vert \delta_t e^{\bar{k}} \Vert_2\right)\\
&\leqslant C\left( \Vert e^{k+2} \Vert_2 + \Vert e^{k} \Vert_2 + \Vert e^{k-2} \Vert_2 + \Vert \Lambda^{\alpha}e^{k+2} \Vert_2 + \Vert \Lambda^{\alpha}e^{k} \Vert_2 + \Vert \Lambda^{\alpha}e^{k-2} \Vert_2 \right)\nonumber \\
&\quad+ C\left(\Vert (\mathcal{L}^{\alpha}_x+\mathcal{L}^{\alpha}_y) e^{k+2} \Vert_2 + \Vert (\mathcal{L}^{\alpha}_x+\mathcal{L}^{\alpha}_y) e^{k} \Vert_2 + \Vert (\mathcal{L}^{\alpha}_x+\mathcal{L}^{\alpha}_y) e^{k-2} \Vert_2 + \Vert \delta_t e^{k+1} \Vert_2 + \Vert \delta_t e^{k-1} \Vert_2 \right)\nonumber \\
&\leqslant C\left( \Vert e^{k+2} \Vert_2 + \Vert e^{k} \Vert_2 + \Vert e^{k-2} \Vert_2 + \Vert r^{k+1} \Vert_2 + \Vert r^{k-1} \Vert_2 + \Vert P^{k+1} \Vert_2 + \Vert P^{k-1} \Vert_2\right)\nonumber \\
&\quad + C\left(\Vert (\mathcal{L}^{\alpha}_x+\mathcal{L}^{\alpha}_y)e^{k+2}\Vert_2 + \Vert (\mathcal{L}^{\alpha}_x+\mathcal{L}^{\alpha}_y)e^{k}\Vert_2 + \Vert (\mathcal{L}^{\alpha}_x+\mathcal{L}^{\alpha}_y)e^{k-2}\Vert_2 \right),
\end{split}&
\end{flalign}
\begin{flalign}
\begin{split}
\hspace{4mm}
\Vert I_5 \Vert_2 &= \Vert \delta_t (u^k)^*e^{\bar{k}}e^{\bar{\bar{k}}} + \delta_t e^k (u^{\bar{k}})^*e^{\bar{\bar{k}}} + \delta_t e^{\bar{k}}(u^*e)^{\bar{k}} \Vert_2\nonumber \\
&\leqslant \Vert u_t \Vert_{\infty} \Vert e^{\bar{k}} \Vert_{\infty}\Vert e^{\bar{\bar{k}}} \Vert_{2} + \Vert \delta_t e^k \Vert_2 \Vert u \Vert_{\infty} \Vert e^{\bar{\bar{k}}} \Vert_2 + \Vert u \Vert_{\infty}\left( \Vert e^{k+1} \Vert_{\infty} + \Vert e^{k-1} \Vert_{\infty}\right)\Vert \delta_t e^{\bar{k}} \Vert_2\nonumber \\
&\leqslant C\left(\Vert e^{\bar{\bar{k}}} \Vert_2+ \Vert \delta_t e^{\bar{k}} \Vert_2 \right)\nonumber \\
&\leqslant C\left( \Vert e^{k+2} \Vert_2 + \Vert e^{k} \Vert_2 + \Vert e^{k-2} \Vert_2 + \Vert \delta_t e^{k+1} \Vert_2 + \Vert \delta_t e^{k-1} \Vert_2 \right)\nonumber \\
&\leqslant C\left( \Vert e^{k+2} \Vert_2 + \Vert e^{k} \Vert_2 + \Vert e^{k-2} \Vert_2 + \Vert r^{k+1} \Vert_2 + \Vert r^{k-1} \Vert_2 + \Vert P^{k+1} \Vert_2 + \Vert P^{k-1} \Vert_2\right)\nonumber \\
&\quad + C\left(\Vert (\mathcal{L}^{\alpha}_x+\mathcal{L}^{\alpha}_y)e^{k+2}\Vert_2 + \Vert (\mathcal{L}^{\alpha}_x+\mathcal{L}^{\alpha}_y)e^{k}\Vert_2 + \Vert (\mathcal{L}^{\alpha}_x+\mathcal{L}^{\alpha}_y)e^{k-2}\Vert_2 \right),
\end{split}&
\end{flalign}
\begin{flalign}
\begin{split}
\hspace{4mm}
\Vert I_6 \Vert_2 &= \Vert \delta_t e^k (e^{\bar{k}})^*u^{\bar{\bar{k}}} + \delta_t (e^k)^*e^{\bar{k}}u^{\bar{\bar{k}}} + \delta_t u^{\bar{k}}(ee^*)^{\bar{k}} \Vert_2\nonumber \\
&\leqslant 2\Vert u \Vert_{\infty} \Vert e^{\bar{k}} \Vert_{\infty} \Vert \delta_t e^k \Vert_2 + \Vert u_t \Vert_{\infty}\left( \Vert e^{k+1} \Vert_{\infty}\Vert e^{k+1} \Vert_{2} + \Vert e^{k-1} \Vert_{\infty}\Vert e^{k-1} \Vert_{2} \right)\nonumber \\
&\leqslant C\left( \Vert e^{k+1} \Vert_2 + \Vert e^{k-1} \Vert_2 + \Vert \delta_t e^k \Vert_2 \right)\nonumber \\
&\leqslant C\left( \Vert e^{k+1} \Vert_2 + \Vert e^{k-1} \Vert_2 + \Vert r^{k} \Vert_2 + \Vert P^{k} \Vert_2 + \Vert (\mathcal{L}^{\alpha}_x+\mathcal{L}^{\alpha}_y)e^{k+1}\Vert_2 + \Vert (\mathcal{L}^{\alpha}_x+\mathcal{L}^{\alpha}_y)e^{k-1}\Vert_2 \right),
\end{split}&
\end{flalign}
\begin{align}
\begin{split}
\hspace{4mm}
\Vert I_7 \Vert_2 &= \Vert \delta_t e^k(e^{\bar{k}})^*e^{\bar{\bar{k}}} + \delta_t (e^k)^* e^{\bar{k}}e^{\bar{\bar{k}}} + \delta_te^{\bar{k}}(ee^*)^{\bar{k}}\Vert_2\nonumber \\
&\leqslant 2\Vert \delta_t e^k \Vert_2 \Vert e^{\bar{k}} \Vert_{\infty}\Vert e^{\bar{\bar{k}}} \Vert_{\infty} + \Vert \delta_t e^{\bar{k}} \Vert_2\left( \Vert e^{k+1} \Vert^2_{\infty} + \Vert e^{k-1} \Vert^2_{\infty} \right) \nonumber \\
&\leqslant C\left(\Vert e^{\bar{\bar{k}}} \Vert_{\infty}+\Vert \delta_t e^{\bar{k}} \Vert_2\right)\nonumber \\
&\leqslant C\left(\Vert e^{\bar{\bar{k}}} \Vert_{H^{\alpha}}+\Vert \delta_t e^{\bar{k}} \Vert_2\right)\nonumber \\
&\leqslant C\left(\Vert e^{k+2} \Vert_{H^{\alpha}} + \Vert e^k \Vert_{H^{\alpha}} + \Vert e^{k-2} \Vert_{H^{\alpha}} + \Vert \delta_t e^{k+1} \Vert_2 + \Vert \delta_t e^{k-1} \Vert_2\right)\nonumber \\
&\leqslant C\left(\Vert e^{k+2} \Vert_2 + \Vert e^k \Vert_2 + \Vert e^{k-2}\Vert_2 + \Vert \Lambda^{\alpha} e^{k+2} \Vert_2 + \Vert \Lambda^{\alpha} e^k \Vert_2 + \Vert \Lambda^{\alpha} e^{k-2} \Vert_2 \right)\nonumber \\
&\quad +C\left( \Vert (\mathcal{L}^{\alpha}_x+\mathcal{L}^{\alpha}_y)e^{k+2}\Vert_2 + \Vert (\mathcal{L}^{\alpha}_x+\mathcal{L}^{\alpha}_y)e^{k}\Vert_2 + \Vert (\mathcal{L}^{\alpha}_x+\mathcal{L}^{\alpha}_y)e^{k-2}\Vert_2\right.\nonumber\\
&\quad\left. + \Vert r^{k+1} \Vert_2 + \Vert r^{k-1} \Vert_2+ \Vert P^{k+1} \Vert_2 + \Vert P^{k-1} \Vert_2\right),
\end{split}&
\end{align}
 when $\tau, h_x, h_y<1$.

Now we have for $2\leqslant k \leqslant n-1$,
\begin{align}
\qquad\qquad&\left|\left( \delta_t P^k, (\mathcal{L}^{\alpha}_{x}+\mathcal{L}^{\alpha}_{y})e^{k}\right)\right| \nonumber\\
\qquad\qquad\leqslant & \sum_{i=1}^{7}\Vert I_i \Vert_2\Vert (\mathcal{L}^{\alpha}_{x}+\mathcal{L}^{\alpha}_{y})e^{k} \Vert_2 \nonumber \\
\qquad\qquad\leqslant & C\left( \Vert e^{k+2} \Vert_2^2 + \Vert e^{k+1} \Vert_2^2 + \Vert e^k \Vert_2^2 + \Vert e^{k-1} \Vert_2^2 + \Vert e^{k-2} \Vert_2^2\right)\nonumber \\
\qquad\qquad&+ C\left( \Vert \Lambda^{\alpha}e^{k+2} \Vert_2^2 + \Vert \Lambda^{\alpha}e^{k+1} \Vert_2^2 + \Vert \Lambda^{\alpha}e^{k} \Vert_2^2 + \Vert \Lambda^{\alpha}e^{k-1} \Vert_2^2 + \Vert \Lambda^{\alpha}e^{k-2} \Vert_2^2\right)\nonumber \\
\qquad\qquad&+ C\left( \Vert (\mathcal{L}^{\alpha}_{x}+\mathcal{L}^{\alpha}_{y})e^{k+2} \Vert_2^2 \right.\nonumber\\
\qquad\qquad &\left.+ \Vert (\mathcal{L}^{\alpha}_{x}+\mathcal{L}^{\alpha}_{y})e^{k+1} \Vert_2^2 + \Vert (\mathcal{L}^{\alpha}_{x}+\mathcal{L}^{\alpha}_{y})e^{k} \Vert_2^2 + \Vert (\mathcal{L}^{\alpha}_{x}+\mathcal{L}^{\alpha}_{y})e^{k-1} \Vert_2^2 + \Vert (\mathcal{L}^{\alpha}_{x}+\mathcal{L}^{\alpha}_{y})e^{k-2} \Vert_2^2\right)\nonumber \\
\qquad\qquad& + C\left(\Vert r^{k+1}\Vert_2^2 + \Vert r^k \Vert_2^2 + \Vert r^{k-1} \Vert_2^2 + \Vert P^{k+1} \Vert_2^2  + \Vert P^k \Vert_2^2 + \Vert P^{k-1} \Vert_2^2\right).
\end{align}
Therefore, it follows from Lemma~\ref{Lemma8} and the estimation (\ref{estimate5151}), that
\begin{align}\label{estimation455}
&\quad 8\tau \sum_{k=1}^{n} Re\left[(P^k, (\mathcal{L}^{\alpha}_{x}+\mathcal{L}^{\alpha}_{y})\delta_te^{k})\right]\nonumber \\
&= 4[ Re\left( P^n, \left( \mathcal{L}_x^{\alpha} + \mathcal{L}_y^{\alpha} \right)e^{n+1} \right) + Re \left( P^{n-1}, \left( \mathcal{L}_x^{\alpha} + \mathcal{L}_y^{\alpha} \right)e^n \right) - Re\left( P^1, \left( \mathcal{L}_x^{\alpha} + \mathcal{L}_y^{\alpha} \right)e^0 \right) \nonumber \\
&\quad -Re\left( P^2, \left( \mathcal{L}_x^{\alpha} + \mathcal{L}_y^{\alpha} \right)e^1 \right) ]- 8\tau Re \sum_{k=2}^{n-1}\left( \delta_t P^k, \left( \mathcal{L}_x^{\alpha} + \mathcal{L}_y^{\alpha} \right)e^k \right)\nonumber \\
&\leqslant 8\tau\sum_{k=2}^{n-1} \vert\left( \delta_t P^k, (\mathcal{L}^{\alpha}_{x}+\mathcal{L}^{\alpha}_{y})e^{k}\right)\vert + 4\left( 4\Vert P^n \Vert_2^2 + 4\Vert P^{n-1} \Vert_2^2 + \dfrac{1}{16}\Vert \left( \mathcal{L}_x^{\alpha} + \mathcal{L}_y^{\alpha} \right)e^{n+1} \Vert_2^2\right)\nonumber \\
&\quad+ 4\left(\dfrac{1}{16}\Vert \left( \mathcal{L}_x^{\alpha} + \mathcal{L}_y^{\alpha} \right)e^{n} \Vert_2^2 + \left( 6C_1^2C_3 + C_e^2 \right)\left( \tau^2 + h_x^2 + h_y^2 \right)^2 \right)\nonumber \\
&\leqslant 8C\tau \sum_{k=2}^{n-1}(\Vert (\mathcal{L}^{\alpha}_{x}+\mathcal{L}^{\alpha}_{y})e^{k+2} \Vert_2^2 + \Vert (\mathcal{L}^{\alpha}_{x}+\mathcal{L}^{\alpha}_{y})e^{k+1} \Vert_2^2 + \Vert (\mathcal{L}^{\alpha}_{x}+\mathcal{L}^{\alpha}_{y})e^{k} \Vert_2^2 \nonumber \\
&\quad + \Vert (\mathcal{L}^{\alpha}_{x}+\mathcal{L}^{\alpha}_{y})e^{k-1} \Vert_2^2 + \Vert (\mathcal{L}^{\alpha}_{x}+\mathcal{L}^{\alpha}_{y})e^{k-2} \Vert_2^2)\nonumber\\
&\quad + 8C\tau \sum_{k=2}^{n-1}\left(\Vert \Lambda^{\alpha} e^{k+2} \Vert_2^2 + \Vert \Lambda^{\alpha}e^{k+1} \Vert_2^2 + \Vert \Lambda^{\alpha}e^{k} \Vert_2^2 + \Vert\Lambda^{\alpha}e^{k-1} \Vert_2^2 + \Vert \Lambda^{\alpha}e^{k-2} \Vert_2^2\right)\nonumber\\
&\quad + 8C\tau \sum_{k=2}^{n-1}\left( \Vert e^{k+2} \Vert_2^2 + \Vert e^{k+1} \Vert_2^2 + \Vert e^k \Vert_2^2 + \Vert e^{k-1} \Vert_2^2 + \Vert e^{k-2} \Vert_2^2\right)\nonumber \\
&\quad + 8C\tau\sum_{k=2}^{n-1}\left( \Vert r^{k+1} \Vert_2^2 + \Vert r^{k} \Vert_2^2 + \Vert r^{k-1} \Vert_2^2 + \Vert P^{k+1} \Vert_2^2 + \Vert P^{k} \Vert_2^2 + \Vert P^{k-1} \Vert_2^2\right)\nonumber\\
&\quad+8\tau C\left( \Vert e^{k+2} \Vert_2^2 + \Vert e^{k+1} \Vert_2^2 + \Vert e^k \Vert_2^2 + \Vert e^{k-1} \Vert_2^2 + \Vert e^{k-2} \Vert_2^2\right)\nonumber\\
&\quad + 16\Vert P^n \Vert_2^2 + 16\Vert P^{n-1} \Vert_2^2 + \dfrac{1}{4}\Vert (\mathcal{L}^{\alpha}_{x}+\mathcal{L}^{\alpha}_{y})e^{n+1} \Vert_2^2 + \dfrac{1}{4}\Vert (\mathcal{L}^{\alpha}_{x}+\mathcal{L}^{\alpha}_{y})e^{n} \Vert_2^2\nonumber \\
&\quad + 4\left(6C_1^2C_3 + C_e^2 \right)\left(\tau^2 + h_x^2 + h_y^2\right)^2\nonumber \\
&\leqslant 40C\tau\sum_{k=1}^{n}\left(\Vert (\mathcal{L}^{\alpha}_{x}+\mathcal{L}^{\alpha}_{y})e^{k} \Vert_2^2 + \Vert \Lambda^{\alpha} e^k \Vert_2^2 \right) + \left(8C\tau+\dfrac{1}{4}\right)\Vert(\mathcal{L}^{\alpha}_{x}+\mathcal{L}^{\alpha}_{y})e^{n+1}\Vert_2^2 \nonumber \\
&\quad + \left(8C\tau+\dfrac{1}{4}\right)\Vert(\mathcal{L}^{\alpha}_{x}+\mathcal{L}^{\alpha}_{y})e^{n}\Vert_2^2+ 8C\tau \left( \Vert \Lambda^{\alpha}e^{n+1} \Vert_2^2 + \Vert \Lambda^{\alpha}e^n \Vert_2^2 \right)  \nonumber\\
&\quad   + C_6\left( \tau^2 + h_x^2 + h_y^2 \right)^2,
\end{align}
Since $ \Vert e^{n+1} \Vert_2^2 + \Vert e^n \Vert_2^2 $, $\Vert P^n \Vert_2^2$ and $\Vert r^n \Vert_2^2$ are all bounded by constant multiples of $\left(\tau^2 + h_x^2 + h_y^2 \right)^2$, we can use $C_6$ as a general positive constant to represent the sum of all coefficients of $\left(\tau^2 + h_x^2 + h_y^2 \right)^2$ in the above estimation.

Now we put these estimations (\ref{estimation450})-(\ref{estimation455}) backward into the inequality (\ref{estimate68}), to obtain
\begin{align}
\qquad\qquad&F_{n+1}-F_1\nonumber\\
\qquad\qquad\leqslant &16\tau\sum_{k=1}^{n}F_k + \left( 40C\tau + 4\tau \right)\sum_{k=1}^{n}\left(\Vert (\mathcal{L}^{\alpha}_{x}+\mathcal{L}^{\alpha}_{y})e^{k} \Vert_2^2 + \Vert \Lambda^{\alpha} e^k \Vert_2^2 \right) \nonumber\\
\qquad\qquad&+ \left(8C\tau + \dfrac{1}{2}\right)\left(\Vert(\mathcal{L}^{\alpha}_{x}+\mathcal{L}^{\alpha}_{y})e^{n+1}\Vert_2^2 + \Vert(\mathcal{L}^{\alpha}_{x}+\mathcal{L}^{\alpha}_{y})e^{n}\Vert_2^2\right)\nonumber \\
\qquad\qquad& + 8C\tau \left( \Vert \Lambda^{\alpha}e^{n+1} \Vert_2^2 + \Vert \Lambda^{\alpha}e^n \Vert_2^2 \right) + \left(C_4+C_5\right)\left( \tau^2 + h_x^2 + h_y^2 \right)^2\nonumber \\
\leqslant& \left( 40C\tau + 20\tau \right)\sum_{k=1}^{n}F_k + \left(8C\tau + \dfrac{1}{2}\right)\left(\Vert(\mathcal{L}^{\alpha}_{x}+\mathcal{L}^{\alpha}_{y})e^{n+1}\Vert_2^2 + \Vert(\mathcal{L}^{\alpha}_{x}+\mathcal{L}^{\alpha}_{y})e^{n}\Vert_2^2\right)\nonumber\\
\qquad\qquad&+ 8C\tau \left( \Vert \Lambda^{\alpha}e^{n+1} \Vert_2^2 + \Vert \Lambda^{\alpha}e^n \Vert_2^2 \right)+ \left(C_4+C_5+C_6\right)\left( \tau^2 + h_x^2 + h_y^2 \right)^2.
\end{align}
{Furthermore, let $\tau_3 = \frac{1}{32C}$. If we take $\tau< \tau_3$, then it holds that $ 1/2-8C\tau \geq 1/4 $}. And since
\begin{align}
\qquad\qquad&\left( \dfrac{1}{2}-8C\tau \right)F_{n+1}-F_1 \nonumber\\
\qquad\qquad\leqslant &F_{n+1}-F_1 -\left(8C\tau + \dfrac{1}{2}\right)\left(\Vert(\mathcal{L}^{\alpha}_{x}+\mathcal{L}^{\alpha}_{y})e^{n+1}\Vert_2^2 + \Vert(\mathcal{L}^{\alpha}_{x}+\mathcal{L}^{\alpha}_{y})e^{n}\Vert_2^2\right)\nonumber\\
           \qquad\qquad &- 8C\tau \left( \Vert \Lambda^{\alpha}e^{n+1} \Vert_2^2 + \Vert \Lambda^{\alpha}e^n \Vert_2^2 \right) \nonumber \\
\qquad\qquad\leqslant &\left( 40C\tau + 20\tau \right)\sum_{k=1}^{n}F_k + \left(C_4+C_5+C_6\right)\left( \tau^2 + h_x^2 + h_y^2 \right)^2,
\end{align}
then we have an estimation for $F_{n+1}$:
\begin{align}\label{estimate458}
F_{n+1}\leqslant 4F_1 + 4\left( 40C + 20 \right)\tau\sum_{k=1}^{n}F_k + 4\left(C_4+C_5+C_6\right)\left( \tau^2 + h_x^2 + h_y^2 \right)^2.
\end{align}
As for the rest terms, from the results of Lemma~\ref{Lemma8},
\begin{align}
  F_1&=\|(\mathcal{L}^{\alpha}_x+\mathcal{L}^{\alpha}_y)e^{1}\|_2^2+\|\Lambda^{\alpha} e^{1}\|_2^2 \leqslant 2(b-a)(d-c)C_e^2(\tau^2+ h_x^2 + h_y^2)^2.\label{e1est}
\end{align}
Substituting (\ref{e1est}) into (\ref{estimate458}) gives
\begin{align}
F_{n+1}\leqslant (160C+80)\tau \sum^{n}_{k=1}F_k+C_7(\tau^2+h_x^2+h_y^2)^2,\notag
\end{align}
where $C_7$ is the sum of all coefficients of $ (\tau^2+h_x^2+h_y^2)^2 $ in the resulting estimation. Thus by the discrete Gronwall's equation, it yields
\begin{align}\label{result2}
F_{n+1} \leqslant &\; C_7\exp\left\lbrace \sum_{k=1}^{n}(160C+80)\tau \right\rbrace(\tau^2+h_x^2+h_y^2)^2 \nonumber\\
                                \leqslant&\;  C_7\exp\left\lbrace (160C+80)T \right\rbrace(\tau^2+h_x^2+h_y^2)^2:=C_8(\tau^2+h_x^2+h_y^2)^2.
\end{align}
Now we combine the consequences (\ref{result1}) and (\ref{result2}). Then it holds that
\begin{align}
\qquad\qquad &\| e^{n+1}\|_2^2 + \| \Lambda^{\alpha} e^{n+1}\|_2^2 + \| (\mathcal{L}^{\alpha}_x+\mathcal{L}^{\alpha}_y)e^{n+1}\|_2^2 \nonumber\\
\qquad\qquad\leqslant &\| e^{n+1}\|_2^2+\| e^{n}\|_2^2 + \| \Lambda^{\alpha} e^{n+1}\|_2^2+\| \Lambda^{\alpha} e^{n}\|_2^2+\| (\mathcal{L}^{\alpha}_x+\mathcal{L}^{\alpha}_y)e^{n+1}\|_2^2+\| (\mathcal{L}^{\alpha}_x+\mathcal{L}^{\alpha}_y)e^{n}\|_2^2\nonumber \\
\qquad\qquad = & E_{n+1}+F_{n+1} \nonumber \\
\qquad\qquad \leqslant & (C_3 + C_8)(\tau^2+h_x^2+h_y^2)^2.\notag
\end{align}

From the mean value inequality, we see
\begin{center}
$ \| e^{n+1}\|_2 + \| \Lambda^{\alpha} e^{n+1}\|_2 + \| (\mathcal{L}^{\alpha}_x+\mathcal{L}^{\alpha}_y)e^{n+1}\|_2  \leqslant \sqrt{3} \sqrt{\| e^{n+1}\|_2^2 + \| \Lambda^{\alpha} e^{n+1}\|_2^2 + \| (\mathcal{L}^{\alpha}_x+\mathcal{L}^{\alpha}_y)e^{n+1}\|_2^2} $.
\end{center}
Thus we need to take $C^0=\max\left\{\sqrt{3(C_3+C_8)}, 3\sqrt{(b-a)(d-c)}C_e\right\}$. Since $C_0=C_{\alpha}\left(\pi / 2\right)^{2\alpha}$
$\cdot C^0$, once $C^0$ is fixed, the condition for $\tau_1, h_1$, i.e., $\tau_1^2+2h_1^2<1/C_0$ can be used to determine $\tau_1, h_1$. Therefore, let $\tau_0=\min\left\{\tau_1,\tau_2,\tau_3,1\right\}$, $ h_x^0=\min\left\{h_1,1\right\} $ and $h_y^0=\min\left\{h_1,1\right\}$, then (\ref{inductionproposition}) is valid for $n+1$.
 This completes the mathematical induction, i.e., it holds that
\begin{align}
\| e^{n+1}\|_2 + \| \Lambda^{\alpha} e^{n+1}\|_2 + \| (\mathcal{L}^{\alpha}_x+\mathcal{L}^{\alpha}_y)e^{n+1}\|_2 \leqslant C^0(\tau^2 + h_x^2 + h_y^2), \quad 1\leqslant n\leqslant N,
\end{align}
which also shows that
\begin{align}
\Vert e^n \Vert_{\infty} \leqslant C_{\alpha}\left(\dfrac{\pi}{2}\right)^{2\alpha}C^0(\tau^2 + h_x^2 + h_y^2):=C_0(\tau^2 + h_x^2 + h_y^2), \quad 1\leqslant n\leqslant N.
\end{align}
\end{proof}

\begin{theorem}\label{theorem5}
Suppose that the solution of problem (\ref{IVP1})--(\ref{IVP2}) is smooth enough, then the solution $U^n$ of
the difference scheme (\ref{scheme1})--(\ref{scheme3}) is bounded in the $L^{\infty}$-norm for $\tau<\tau_0$ and $h<h_0$, i.e.,
 \begin{align}
\| U^n\|_{\infty}\leqslant C_*, \quad  1\leqslant n \leqslant N,
\end{align}
where $\tau_0$ and $h_0$ are the same positive constants in Theorem~\ref{theorem3}.
 \end{theorem}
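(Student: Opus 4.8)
The plan is to derive the bound as an immediate consequence of the convergence result in Theorem~\ref{theorem3}, by splitting the numerical solution into the exact solution and the error through the triangle inequality. Since the error function is defined by $e^n = u^n - U^n$, we have $U^n = u^n - e^n$, and hence
\begin{align*}
\|U^n\|_{\infty} \leqslant \|u^n\|_{\infty} + \|e^n\|_{\infty}, \quad 1\leqslant n\leqslant N.
\end{align*}

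First I would control the exact-solution term. Because $u(x,y,t)$ is assumed smooth enough and the space-time domain $[a,b]\times[c,d]\times[0,T]$ is compact, the quantity $\|u^n\|_{\infty}$ is bounded uniformly in $n$ by
\begin{align*}
C_m=\max_{\substack{a\leqslant x \leqslant b,\,c\leqslant y \leqslant d,\,0\leqslant t \leqslant T}}|u(x,y,t)|,
\end{align*}
which is precisely the constant already introduced in the proof of Theorem~\ref{theorem3}.

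Next I would bound the error term using Theorem~\ref{theorem3} directly. Under the hypotheses $\tau<\tau_0$ and $h_x,h_y<h_0$ (the very same thresholds produced in that theorem), the optimal pointwise estimate gives $\|e^n\|_{\infty}\leqslant C_0(\tau^2+h_x^2+h_y^2)$ for all $1\leqslant n\leqslant N$. Since $\tau_0\leqslant 1$ and $h_0\leqslant 1$, the factor $\tau^2+h_x^2+h_y^2$ stays bounded by a fixed constant — indeed it may be taken below $1/C_0$ exactly as in the choice of $\tau_1,h_1$ inside Theorem~\ref{theorem3} — so that $\|e^n\|_{\infty}\leqslant 1$. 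Combining the two estimates, in the same spirit as inequality~(\ref{estimate47}), yields
\begin{align*}
\|U^n\|_{\infty} \leqslant C_m + 1 =: C_*, \quad 1\leqslant n\leqslant N,
\end{align*}
which is the desired conclusion.

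Because every ingredient has already been established, there is in fact no substantial obstacle here: the statement is essentially a corollary of the convergence theorem, and indeed the uniform bound $\|U^m\|_{\infty}\leqslant C_m+1$ was implicitly used already during the induction of Theorem~\ref{theorem3}. The only point deserving attention is to confirm that the smallness thresholds $\tau_0,h_0$ inherited from Theorem~\ref{theorem3} force $\tau^2+h_x^2+h_y^2$ to remain bounded, so that the error contribution can be absorbed into a single constant $C_*$ independent of $\tau$, $h_x$ and $h_y$.
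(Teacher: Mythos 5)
Your proposal is correct and follows essentially the same route as the paper's own proof: triangle inequality $\|U^n\|_{\infty}\leqslant\|u^n\|_{\infty}+\|e^n\|_{\infty}$, the bound $\|u^n\|_{\infty}\leqslant C_m$ from smoothness on the compact domain, and Theorem~\ref{theorem3} together with the threshold condition $\tau_1^2+2h_1^2<1/C_0$ to absorb the error term into the constant $1$, giving $C_*=C_m+1$. No further comment is needed.
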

 \begin{proof}
 From Theorem~\ref{theorem3}, we have
 \begin{align}
\| U^n\|_{\infty}&\leqslant \| u^n\|_{\infty}+\| e^n\|_{\infty}\leqslant C_m+C_0(\tau^2+2h^2), \quad  1\leqslant n \leqslant N,\notag
\end{align}
for $\tau<\tau_0,h<h_0$.

Since $C_0(\tau^2+2h^2)<1$ for  $\tau<\tau_0, h<h_0$, we have
 \begin{align}
\| U^n\|_{\infty}\leqslant  C_*, \quad  1\leqslant n \leqslant N,\notag
\end{align}
where $C_*=C_m+1$. This completes the proof.
\end{proof}

\section{Numerical results}\label{section5}
    In this section, we present some numerical results of the proposed difference scheme
    \eqref{Boundary}, \eqref{scheme1}-\eqref{scheme2} and \eqref{scheme3} to support our main theoretical findings.
    In the computation, uniform spatial meshsizes $h_x=h_y=h$ are used.

\begin{example}\label{ex1}
In order to test the accuracy and verify the unconditional stability of the proposed scheme, we consider the following equation with source terms:
\begin{align*}
u_t(x,y,t)={\bf i} L_{\alpha}u(x,y,t)+{\bf i}\vert u(x,y,t)\vert^2 u(x,y,t)+g(x,y,t).
\end{align*}
The initial conditions, and the source term $g(x,y,t)$ are determined by the exact solution
\begin{equation*}
  u(x,y,t) = {\bf i}\sin(t)x^4(2-x)^4y^4(2-y)^4.
\end{equation*}
We note that the solution satisfies homogeneous Dirichlet condition
\begin{align*}
u\big|_{\partial \Omega}=0,\quad where\quad\Omega=[0,2]\times[0,2].
\end{align*}
\end{example}

Tables \ref{tab1} shows the maximum norm errors and corresponding convergence rates for various choices of $\alpha$.
It can be easily seen that the convergence order of the proposed scheme approaches to 2, which is consistent
with Theorem~\ref{theorem3} in the above section.

To show the unconditional stability of the method, we fix $h$ and vary $\tau$, results for $\alpha = 1.2$ and
$\alpha = 1.5$ are plotted in Figure \ref{Fig22}. As we can see
that the results clearly show that the time step is not related to the spatial meshsize, and as the time steps
go to zero, the dominant errors come from the spatial parts.

\begin{example}\label{ex3}
In this example, we compute the practical problem with the initial conditions as follows:
\begin{align*}
u(x,y,0)&=\frac{2}{\sqrt{\pi}}\exp\lbrace -(x^2+y^2)\rbrace, \quad (x,y)\in \Omega \triangleq [-5,5]\times[-5,5].
\end{align*}
Since the initial value $u(x, y, 0)$ exponentially decays to zero when $(x, y)$ is away from the origin,
we impose the homogeneous boundary condition in the computation
\begin{align*}
u(x,y,0)&=0, \quad (x,y)\textrm{ on } \partial\Omega.
\end{align*}
\end{example}

To show the discrete energy conservative property, we choose $\tau = 1/20, h = 1/20$ and present
the discrete energy $E^n$ at different times for different $\alpha$ $(\alpha = 1.2, \alpha = 1.5, \alpha = 1.8)$
in Table \ref{tab11}. It is found that the proposed scheme preserves the  energy conservation property very well.
Therefore, the scheme is suitable for long-time simulation.

Taking $\tau = 1/16$ and $h= 1/16$, we plot the numerical solutions that derived from the proposed scheme to
investigate  the propagation of solution profile by using  different $\alpha$. The results are shown in Figures
\ref{Fig2}-\ref{Fig4}.  One can see that the value of $\alpha$ significantly affects the shape of the solution.
The larger  $\alpha$ is,  the more heterogeneity of the solution at late time stages is.

\section{Concluding remarks}\label{section6}
In this paper, we developed a linearized semi-implicit finite difference scheme for 2D SFNSE.
A rigorous analysis of the proposed finite difference scheme is carried out, which includes the
conservation, the unique solvability, the unconditional stability and the second-order convergence
in the $L^{\infty}$-norm. {What is worth mentioning is that the theoretical conclusions such as the optimal pointwise $L^{\infty}$ error estimate are presented for the first time in the literatures in this field.
In the end, we implemented the difference scheme through two numerical tests, which showed a perfect consistency with our theoretical findings.}

In addition, our numerical method and  the analysis technique of the optimal pointwise error estimates can be
 easily extended to the cases with spatial fourth-order accuracy \cite{ZS2014}, the 2D coupled nonlinear
 space  fractional Schr\"{o}dinger equation \cite{RZ2016},
 2D space  fractional Ginzburg-Landau equation \cite{Pan2020b,ZPLR}
 and some other space  fractional diffusion equation in 2D and 3D~\cite{Yue2019,ZLAT2009,He2020,Zhang2020}, which will be our future work.
 In addition, for the resulting systems of algebraic equations, the coefficient matrices have Toeplitz structure,
 which can be solved by adopting a super-fast solver with preconditioner \cite{LS2013,LMS2017,LMS2019,LMS2018} or multigrid methods \cite{Pan2017,Pan2021} to reduce the
 CPU time and storage requirement in future.

 \section*{Acknowledgement}
{Hongling Hu was supported by the National Natural Science Foundation of China (No. 12071128), the Research Foundation of Education Bureau of Hunan Province of China (No. 18B002). Dongdong He was supported by  the president's fund-research start-up fund from the Chinese University of Hong Kong, Shenzhen (No. PF01000857). Kejia Pan was supported by Science Challenge Project (No. TZ2016002), the National Natural Science Foundation of China (No. 41874086).
Qifeng Zhang was supported by  the Natural Sciences Foundation of Zhejiang Province (No. LY19A010026), the Zhejiang Province ``Yucai'' Project (No. 2019YCGC012) and the Fundamental Research Funds of Zhejiang Sci-Tech University (No. 2019Q072).}

  \section*{Conflict of Interest Statement}

 The authors  declare that they have no conflict of interest.

   \section*{Data Availability Statement}

 The data that support the findings of this study are available from the corresponding author upon reasonable request.
\begin{appendix}
\section{The proof of the mass and energy conservation}\label{appendixa}
 Multiplying \eqref{IVP1} by $u^{\ast}(x,y,t)$ and integrating in $\mathbb{R}^2$, we have
      \begin{equation}\label{attachment1}
        {\bf i} \int_{\mathbb{R}^2}\limits u_t(x,y,t)u^{\ast}(x,y,t)dxdy + \int_{\mathbb{R}^2}\limits L_{\alpha}u(x,y,t)u^{\ast}(x,y,t)dxdy +
        \int_{\mathbb{R}^2}\limits |u(x,y,t)|^4dxdy =0.
      \end{equation}
      We notice that
      \begin{equation*}
        \int_{\mathbb{R}^2}\limits L_{\alpha} u(x,y,t)u^{\ast}(x,y,t)dxdy = \int_{\mathbb{R}^2}\limits |(-\Delta)^{\frac{\alpha}{4}}u(x,y,t)|dxdy,
      \end{equation*}
      and
      \begin{equation*}
       Re \left\{u_t(x,y,t)u^{\ast}(x,y,t)\right\} = \frac{1}{2}(|u(x,y,t)|^2)_t.
      \end{equation*}
      Taking imaginary part of \eqref{attachment1}, we have
      \begin{equation*}
        \frac{d}{dt}\int_{\mathbb{R}^2}\limits \frac{1}{2}|u(x,y,t)|^2dxdy=0.
      \end{equation*}
      It implies that the first equation holds in \eqref{QE1}.

      Multiplying \eqref{attachment1} by $-u^{\ast}_t(x,y,t)$ and integrating in $\mathbb{R}^2$, we have
      \begin{equation}\label{attachment2}
        -{\bf i} \int_{\mathbb{R}^2}\limits |u_t(x,y,t)|^2 dxdy -  \int_{\mathbb{R}^2}\limits L_{\alpha}u(x,y,t)u^{\ast}_t(x,y,t)dxdy
        -  \int_{\mathbb{R}^2}\limits |u(x,y,t)|^2u(x,y,t)u^{\ast}_t(x,y,t)dxdy = 0.
      \end{equation}
          We notice that
     \begin{equation}\label{attachment3}
        - \int_{\mathbb{R}^2}\limits L_{\alpha}u(x,y,t)u^{\ast}_t(x,y,t)dxdy = \int_{\mathbb{R}^2}\limits (-\Delta)^{\frac{\alpha}{4}}u(x,y,t)\frac{\partial}{\partial t}\left((-\Delta)^{\frac{\alpha}{4}}u(x,y,t)\right)dxdy.
     \end{equation}
     Taking real part of \eqref{attachment3}, we have
     \begin{align*}
       - Re \left\{ L_{\alpha} u(x,y,t) u^{\ast}_t(x,y,t)dxdy \right\}=&
       \displaystyle \int_{\mathbb{R}^2}\limits  \frac{d}{dt}\left(\frac{1}{2} |(-\Delta)^{\frac{\alpha}{4}}u(x,y,t)|^2\right) dxdy\\
       =&  \displaystyle  \frac{1}{2}\frac{d}{dt} \int_{\mathbb{R}^2}\limits  |(-\Delta)^{\frac{\alpha}{4}}u(x,y,t)|^2dxdy.
       \end{align*}
     Taking real part of \eqref{attachment2}, we have
     \begin{align*}
        \frac{1}{2}\frac{d}{dt} \int_{\mathbb{R}^2}\limits  |(-\Delta)^{\frac{\alpha}{4}}u(x,y,t)|^2dxdy -\frac{1}{4}\frac{d}{dt}\int_{\mathbb{R}^2}\limits |u(x,y,t)|^4 dxdy=0,
     \end{align*}
     which implies that the second equation holds in \eqref{QE1}.
\end{appendix}



\section*{References}

\newpage

\begin{table}[!h]
\tabcolsep=5pt
\caption{$L^\infty$-norm errors and their convergence orders for Example \ref{ex1}.}\label{tab1}
 \centering
  \begin{tabular}{lllllllll}
    \hline
   \multirow{1}*{$\tau=h$}   & \multicolumn{2}{l}{$\alpha=1.2$} &   \multicolumn{2}{l}{$\alpha=1.5$} &  \multicolumn{2}{l}{$\alpha=1.9$}  & \multicolumn{2}{l}{$\alpha=2.0$}  \\
    \cline{2-3} \cline{4-5} \cline{6-7} \cline{8-9} &   $\| u^n-{U}^n\|_\infty$ &  order  &  $\| u^n-{U}^n\|_\infty$ &  order      &$\| u^n-{U}^n\|_\infty$ & order   &    $\| u^n-{U}^n\|_\infty$  & order   \\
\hline
$1/16$  &   4.62e$-$02   &   -    &   3.01e$-$02  &    -      &  4.70e$-$02  &    -    &   2.08e$-$02   &   -           \\
$1/32$  &   9.67e$-$03   &   2.26 &   1.05e$-$02  &    1.52   &  7.60e$-$03  &    2.63 &   5.55e$-$03   &   1.91        \\
$1/64$  &   1.81e$-$03   &   2.42 &   2.29e$-$03  &    2.19   &  1.55e$-$03  &    2.30 &   1.27e$-$03   &   2.13        \\
$1/128$ &   5.48e$-$04   &   1.72 &   5.24e$-$04  &    2.13   &  3.89e$-$04  &    1.99 &   3.07e$-$04   &   2.05        \\
$1/256$ &   1.21e$-$04   &   2.18 &   1.26e$-$04  &    2.06   &  1.08e$-$04  &    1.85 &   7.65e$-$05   &   2.00        \\
\hline
\end{tabular}
\end{table}

\begin{table}[!h]
\tabcolsep=15pt
\caption{The  discrete energy $E^n$ at different times with $\tau=1/20$ and $h=1/20$ for Example \ref{ex3}.}\label{tab11}
 \centering
  \begin{tabular}{llll}
    \hline
  & $\alpha = 1.2 $   & $\alpha = 1.5 $   & $\alpha = 1.8 $ \\
\hline
$t=0.5$ & 2.6252539243441122 &   2.8186429624622438  &  3.1334617831273350   \\
$t=1.0$ & 2.6252539243434061 &   2.8186429624639984  &  3.1334617831271649   \\
$t=1.5$ & 2.6252539243427715 &   2.8186429624659994  &  3.1334617831260112   \\
$t=2.0$ & 2.6252539243406749 &   2.8186429624668641  &  3.1334617831288387   \\
$t=2.5$ & 2.6252539243384239 &   2.8186429624676719  &  3.1334617831306835   \\
$t=3.0$ & 2.6252539243365343 &   2.8186429624698901  &  3.1334617831308118   \\
$t=3.5$ & 2.6252539243381099 &   2.8186429624709093  &  3.1334617831335723   \\
$t=4.0$ & 2.6252539243396047 &   2.8186429624729565  &  3.1334617831349845   \\
$t=4.5$ & 2.6252539243401332 &   2.8186429624734930  &  3.1334617831369416   \\
$t=5.0$ & 2.6252539243413202 &   2.8186429624735556  &  3.1334617831445417   \\
\hline
\end{tabular}
\end{table}

\newpage

\end{document}